\documentclass[11pt]{amsart}
\usepackage{amsmath,amsfonts,amssymb,amsxtra,amscd,enumerate,amsthm,dsfont}
\usepackage{color}
\usepackage{latexsym}
\usepackage{epsfig}
  \usepackage{fullpage}
\usepackage{bm}

\usepackage{url}

\newcommand\e{\epsilon}

\newcommand\les{\lesssim}
\newcommand\ges{\gtrsim}

\newcommand\R{\mathbb{R}}

\newcommand\C{\mathbb{C}}
\newcommand\Z{\mathbb{Z}}
\newcommand\N{\mathbb{N}}
\renewcommand\S{\mathbb{S}}

\newcommand{\calE}{\mathcal E}
\newcommand{\calT}{\mathcal T}
\newcommand{\calH}{\mathcal H}
\newcommand{\calR}{\mathcal R}
\newcommand{\calN}{\mathcal N}

\newcommand{\ls}{{\lesssim}}

\newcommand\la{\langle}
\newcommand\ra{\rangle}
\newtheorem{theo}{Theorem}
\numberwithin{theo}{section} 
\newtheorem{lemma}[theo]{Lemma}
\newtheorem{prop}[theo]{Proposition}
\newtheorem{corol}[theo]{Corollary}
\newtheorem{defin}[theo]{Definition}
\newtheorem{conjecture}[theo]{Conjecture}

\newtheorem{con}{Condition}

\numberwithin{equation}{section}

\newcommand{\on}{\operatorname}

\usepackage{mathtools}
\newcommand{\defeq}{\vcentcolon=}

\DeclarePairedDelimiter\abs{\lvert}{\rvert}
\DeclarePairedDelimiter\norm{\lVert}{\rVert}
\DeclarePairedDelimiter\inner{\langle}{\rangle}

\makeatletter
\let\oldabs\abs
\def\abs{\@ifstar{\oldabs}{\oldabs*}}
\let\oldnorm\norm
\def\norm{\@ifstar{\oldnorm}{\oldnorm*}}
\let\oldinner\inner
\def\inner{\@ifstar{\oldinner}{\oldinner*}}
\makeatother

\usepackage{enumitem}

\usepackage[mathlines]{lineno}
\definecolor{linenumbergray}{gray}{0.48}

\begin{document}
\title{The $(n-1)$-linear restriction estimate in $\R^n$ with degenerate curvature}

\author[I.~Bejenaru]{Ioan Bejenaru} \address[I.~Bejenaru]{Department
  of Mathematics, University of California, San Diego, La Jolla, CA
  92093-0112 USA} \email{ibejenaru@ucsd.edu}

\author[T.~Chan]{Tiklung Chan} \address[T.~Chan]{Department
  of Mathematics, University of California, San Diego, La Jolla, CA
  92093-0112 USA} \email{tic017@ucsd.edu}

\begin{abstract} We revisit the $(n-1)$-linear restriction estimate in $\R^n$ proved by the first author in \cite{Be-6} and refine the result in the case when the curvature assumptions contain small parameters. The standard $L^p$ estimate is replaced by a more complex norm/quasi-norm of type $\ell^p L^\frac{2}{n-2}$ which reflects the geometry of wave packet propagation and the degenerate curvature assumptions. The primary motivation for this work stems from a concrete application to the study of the quadratic nonlinear Schr\"odinger equation in $\R^{3+1}$, which is done in a companion paper \cite{BeBoCh}. 
\end{abstract}

\subjclass[2020]{42B15 (Primary), 42B25 (Secondary)}
\keywords{Restriction estimates, Wave packets, Polynomial partitioning}

\maketitle

%\linenumbers % comment out to remove line numbers

\section{Introduction}

For $n \geq 2$, let $U \subset \R^{n-1}$ be an open, bounded and connected  neighborhood of the origin and let $\Sigma: U \rightarrow \R^{n}$ be a smooth parametrization of an $n-1$-dimensional submanifold of $\R^{n}$ (hypersurface), which we denote by $S=\Sigma(U)$. By a smooth parametrization we mean that $\Sigma$ satisfies
\begin{equation} \label{smooth}
\| \partial^\alpha \Sigma \|_{L^\infty(U)} \lesssim_\alpha 1,
\end{equation} 
for all multi-indices $\alpha$ with $|\alpha| \leq N$ for some large $N$. We then say that $S$ is a smooth hypersurface if it admits a parametrization satisfying \eqref{smooth}.
To such a parametrization of $S$ we associate the extension operator $\calE$  defined by
\[
\calE f(x) = \int_U e^{i x \cdot \Sigma(\xi)} f(\xi) d\xi. 
\]
where $f \in L^1(U)$ and $x \in \R^n$. This operator is closely related to a more intrinsic formulation of the extension operator
\begin{equation} \label{Eint}
\tilde \calE g(x)= \int_S e^{ix \cdot \omega} g(\omega) d \sigma_S(\omega),
\end{equation}
where $g \in L^1(S; d \sigma_S)$ and $x \in \R^n$. Indeed, using the parametrization $\Sigma$ we obtain
$\omega=\Sigma(\xi)$ and $d \sigma_S(\omega)=|\frac{\partial \Sigma}{\partial_{\xi_1}} \wedge ... \wedge \frac{\partial \Sigma}{\partial_{\xi_{n-1}}}| d \xi$. Thus,
with $f(\xi)= g(\Sigma(\xi)) |\frac{\partial \Sigma}{\partial_{\xi_1}} \wedge ... \wedge \frac{\partial \Sigma}{\partial_{\xi_{n-1}}}|$, the two formulations are equivalent. 
We will use good parameterizations in the sense that $ |\frac{\partial \Sigma}{\partial_{\xi_1}} \wedge ... \wedge \frac{\partial \Sigma}{\partial_{\xi_{n-1}}}| \approx 1$
throughout the domain and, given that all results are in terms of $L^p$ norms, the equivalence is carried out at the level of the estimates as well. 

Let $1\leq k\leq n$ and for each $i\in\{1,...,k\}$ let $S_i\subset\R^n$ be a smooth compact hypersurface with extension operator $\calE_i$. The $k$-linear restriction estimate is the inequality 
\begin{equation}  \label{MRE}
\| \prod_{i=1}^k \calE_i f_i \|_{L^p(\R^{n})} \les \prod_{i=1}^k \| f_i \|_{L^2(U_i)}.  
\end{equation}
In a more compact format this estimate is abbreviated as
\begin{equation} \label{Rsp}
\calR^*(2 \times ... \times 2 \rightarrow p).
\end{equation}
A natural condition to impose on the hypersurfaces is the standard transversality condition: there exists $\nu >0$ such that
\begin{equation} \label{trans}
| N_1(\zeta_1) \wedge ... \wedge N_{k}(\zeta_k) | \geq \nu,
\end{equation}
for any $\zeta_i \in S_i$, $1\leq i\leq k$; here $N_i(\zeta_i)$ is the unit normal at $\zeta_i$ to $S_i$ and the choice of orientation is not important. 

The second condition we impose is related to curvature: there exists  $\nu_1 >0$ such that for any $\zeta_i \in S_i, i \in \{1,...,k\}$, 
and for any $l \in \{1,...,k\}$ we have
\begin{equation} \label{curva}
| N_1(\zeta_1) \wedge ... \wedge N_{k}(\zeta_k) \wedge S_{N(\zeta_l)} v | \geq \nu_1 | v|,
\end{equation}
for every $v \in T_{\zeta_l} S_l$ with the property $v \perp N_i(\zeta_i), \forall i=1,...,k$. Here $S_{N(\zeta_l)}$ stands for the shape operator associated to the surface $S_l$ at $\xi_l$ - a formal definition is provided in Chapter \ref{tdg}. Based on simple continuity arguments, it can be easily shown that there is some flexibility in the above estimate in the sense that it holds true for any $v \in T_\zeta S_l$ such that $|v \wedge N_1(\zeta_1) \wedge ... \wedge N_{k}(\zeta_k)| \geq (1-\nu_2)  |v| |N_1(\zeta_1) \wedge ... \wedge N_{k}(\zeta_k)|$ for some $0 < \nu_2 \ll 1$, but this will not play a role in our analysis.

When accounting for dimensionality, \eqref{curva} implies that each surface has a non-zero principal curvature with a quantitative bound from below (for its absolute value) and the corresponding principal direction is a ``good" direction, i.e. a direction that is transversal to the normals to the other surfaces. In addition, the shape operator does not ``flip" that particular direction into a ``bad" direction.  

There are two main conjectures in the field concerning $k$-linear restriction estimates. We summarize them here and refer the reader to \cite{Be-6} for a more comprehensive presentation. The first one concerns the generic case, where no curvature assumption is made.

\begin{conjecture} \label{conj}
If the surfaces $S_i$ satisfy \eqref{trans} then $\calR^*(2 \times ... \times 2 \rightarrow p)$ holds true for any $p \geq p(k)=\frac{2}{k-1}$. 
\end{conjecture}
There has been substantial progress on this conjecture:  Bennett, Carbery and Tao established the almost optimal estimate at the endpoint in \cite{BeCaTa} and, more recently, Tao established the sharp estimate away from the endpoint in \cite{Tao-new}. The sharp endpoint estimate for Conjecture \ref{conj} is currently open. For the closely related multilinear Kakeya inequality, Guth established the sharp endpoint estimate in \cite{Gu-main}. 

The second conjecture concerns the non-generic case, where curvature assumptions are made in addition to the transversality condition.

\begin{conjecture} \label{conjj}
If the surfaces $S_i$ satisfy \eqref{trans} and \eqref{curva} then $\calR^*(2 \times ... \times 2 \rightarrow p)$ holds true for any $p \geq p(k)=\frac{2(n+k)}{k(n+k-2)}$. 
\end{conjecture}
 The case $k=1$ is the classical Stein-Tomas estimate, see for instance \cite{St}. The case $k=n$ brings no improvement on the exponent $p$ over the generic case as they both have the same optimal exponent of $\frac{2}{n-1}$. The bilinear case has been intensely studied, see for example \cite{Bou-CM, Wo, Tao-BW, Tao-BP, TV-CM1, Lee-BR,LeeVa, Be2, Va, Le, Te, Sto, BuMuVa, BaLeLe, TaVaVe} and the references therein. 

The remaining cases are $3 \leq k \leq n-1$. Here, the $k$-broad version of this conjecture was established by Guth using polynomial partitioning in \cite{Gu-II}. Other works using polynomial partitioning to prove new broad and linear restriction estimates include \cite{Gu-I, HiRo, Wa, GuWaZh} and references therein. In addition, Kakeya versions of Conjecture \ref{conjj} have been established by Guth and Zahl \cite{GuZa}, Zahl \cite{Za}, Hickman, Rogers and Zhang \cite{HiRoZh}.

As for the genuine multilinear restriction estimate, as formulated by Conjecture \ref{conjj}, the first author established it in the particular case when the surfaces have some conical structure, see \cite{Be3,Be4}. 
To the best of our knowledge, $k=n-1$ is the only known case (in the range $3 \leq k \leq n-1$)  for the genuine multilinear restriction estimate in the general setup, which was established by the first author in \cite{Be-6}.
\begin{theo}[{\cite[Theorem 1.2]{Be-6}}] \label{mainT}  Let $k=n-1$. Assume that $S_1,...,S_k$ satisfy \eqref{trans} and \eqref{curva}. 
Given any $p$ with $p(k)=\frac{2(n+k)}{k(n+k-2)}< p \leq \infty$, the following holds true
\begin{equation} \label{mainE}
\| \prod_{i=1}^k \calE_i f_i \|_{L^p(\R^{n+1})} \leq C(p) \prod_{i=1}^k \| f_i \|_{L^2(U_i)}, \quad \forall f_i \in L^2(U_i).   
\end{equation}
\end{theo}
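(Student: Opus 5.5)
We would follow the strategy of \cite{Be-6}: an induction on scales argument, combined with a wave packet decomposition and a polynomial-partitioning-type analysis adapted to the $(n-1)$-linear setting. Since the statement is trivial for $p=\infty$, and by interpolation with the generic estimate of Bennett--Carbery--Tao (which gives $p=\frac{2}{k-1}$ up to $R^\epsilon$ losses), it suffices to prove the localized estimate
\[
\| \prod_{i=1}^k \calE_i f_i \|_{L^p(B_R)} \leq C_\epsilon R^{\epsilon} \prod_{i=1}^k \| f_i \|_{L^2(U_i)}
\]
at (or near) the endpoint $p(k)$. The $\epsilon$-loss is then removed by an $\epsilon$-removal lemma in the style of Tao, which works for $p>p(k)$ and explains the open range in the statement. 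With $k=n-1$ we have $p(k)=\frac{2(2n-1)}{(n-1)(2n-3)}$. The key structural fact is that $n-1$ transversal normals span an $(n-1)$-plane $V(x)$. The product can therefore only be large where wave packets from all $k$ families meet, and these meetings organize along curves: lines in the direction orthogonal to $N_1\wedge\dots\wedge N_k$.

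\textbf{Step 1.} Decompose each $\calE_i f_i$ on $B_R$ into wave packets $\phi_T$ supported on $R^{1/2}\times\dots\times R^{1/2}\times R$ tubes, with $L^2$ orthogonality. On balls of radius $R^{1/2}$, apply the multilinear Kakeya / Bennett--Carbery--Tao inequality to gain the generic exponent $\frac{2}{k-1}$.

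\textbf{Step 2.} Exploit curvature \eqref{curva}. For a fixed tube $T_j$ of family $j$, the tubes of the other $k-1$ families crossing it must have normals in a set which, by the nondegeneracy of the shape operator in the good direction $v\perp N_i$, is essentially a codimension-one slab of caps. This is the $(n-1)$-linear analogue of the bilinear "tube restricts the other surface to a curve" phenomenon. It yields a gain of $R^{-1/2}$ per incidence along the transverse line direction.

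\textbf{Step 3.} Run the induction on scales. Assume the estimate at scale $R/2$ (or $R^{1-\delta}$) with constant $C(R/2)$. Partition $B_R$ into smaller balls, and on each smaller ball separate the wave packets into those concentrated near a common $(n-1)$-plane (the "tangential" part) and the rest. The non-tangential part is handled by the induction hypothesis, with a gain coming from the $L^2$ concentration of each $f_i$ on fewer caps. The tangential part is handled by the curvature gain of Step 2: restricted to an $(n-1)$-dimensional slab, the problem reduces to a $k$-linear estimate in one lower effective dimension, where $k$ equals the full dimension $n-1$. There the full-transversality BCT/Guth endpoint applies, and the curvature supplies the extra $R^{-\frac{1}{2}\cdot}$ factor.

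The bookkeeping must close with $C(R)\le C(R/2)(1+cR^{-\delta'})+C_\epsilon R^\epsilon$. Balancing the gains from the two parts determines exactly the exponent $p(k)=\frac{2(n+k)}{k(n+k-2)}$.

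\textbf{Main obstacle.} We expect the hardest step to be the tangential case: proving that the tubes crossing a given tube from the other families, which are restricted by curvature, interact efficiently enough. Geometrically, one must check that the shape operator does not rotate the good direction into the span of the other normals (this is precisely the content of \eqref{curva}). Quantitatively, one needs an $L^2$-based Kakeya-type incidence bound along the transverse lines that is compatible with the induction. Our first attempt would be to prove the estimate by slicing the ball into parallel $(n-1)$-planes transversal to the line direction. On each slice we would apply the $(n-1)$-linear endpoint multilinear Kakeya in $\R^{n-1}$, and then integrate along the line direction using a one-dimensional $L^2$ (Plancherel/Stein--Tomas type) gain coming from the curvature.
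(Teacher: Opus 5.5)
There is a genuine gap: nothing in your outline actually produces a gain over the generic exponent $\frac{2}{k-1}$. Step 2 is false as stated. Through every point of a tube $T_j$ pass tubes of family $i$ in every direction, so merely crossing $T_j$ puts no restriction on their normals. In the argument of \cite{Be-6}, which Section \ref{sma} reproduces in the degenerate setting, the restriction comes instead from \emph{tangency to a variety} produced by polynomial partitioning.

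In the algebraic case one has $Z=Z(P_1,\dots,P_m)$ with exactly $m-1$ families transversal to $Z$. A tangential family $m$ then has directions within $\rho^{-\frac12+\delta_1}$ of $T_{z_0}Z$. Hence $|\pi_{\calH^\perp}N_m(\zeta)|\lesssim\rho^{-\frac12+\delta_1}$ on its Fourier support, where $\calH^\perp=(T_{z_0}Z)^\perp\cap N_1^\perp\cap\dots\cap N_{m-1}^\perp$ is one-dimensional. Lemma \ref{kL}, which is where \eqref{curva} enters, upgrades this to Fourier localization in a $\mu$-neighborhood, $\mu=\rho^{-\frac12+\delta_1}$, of a codimension-one submanifold of $S_m$ whose normal is transversal to the other $N_i$. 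The argument then proceeds in three steps:
\begin{enumerate}[label=(\alph*)]
\item Theorem \ref{MB} gains $\mu^{\frac12}\approx\rho^{-\frac14}$ in $L^{\frac2{k-1}}$ on $R^{\frac12+\delta_0}$-cubes.
\item These local bounds are summed using the endpoint multilinear Kakeya bound \eqref{aux30}.
\item Interpolation with the trivial bound $R^{\frac k2}$ in $L^{\frac2k}$ lands exactly at $p(k)=\frac{2(2k+1)}{k(2k-1)}$.
\end{enumerate}
Condition \ref{C} allows at most $n-k$ localized directions, which is why $k=n-1$ is the accessible case.

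Your stated gain of $R^{-\frac12}$ has the wrong size. Interpolated against $R^{\frac k2}$ it would give $\frac{2(k+1)}{k^2}<p(k)$. Your substitute for the tangential step also gains nothing. An $(n-1)$-linear endpoint in $\R^{n-1}$ sits again at exponent $\frac{2}{n-2}=\frac{2}{k-1}$. Slicing by hyperplanes is not available either, because the restriction of $\calE_i f_i$ to a hyperplane is not an extension operator of a hypersurface in that hyperplane, so multilinear Kakeya on slices does not apply.

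The induction on scales also lacks a gain mechanism. Suppose you cover $B_R$ by balls of radius $R/2$ or $R^{1-\delta}$ and apply the hypothesis on each. Every wave packet meets several of the smaller balls, so you only get $C(R)\le c\,C(R/2)$ with some $c>1$. That iterates to a power of $R$, not to $C(R)\le C(R/2)(1+cR^{-\delta'})$, and ``concentration on fewer caps'' does not supply a factor below one.

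In \cite{Be-6} the gain comes from the cells of Theorem \ref{PP-om}. There are $\approx D^n$ cells of comparable submeasure, and each tube enters at most $D+1$ of them. This yields the factor $D^{n-\frac{(n-1)kp}{2}}<\frac12$, since $p>\frac{2n}{k(n-1)}$. The algebraic case then needs several further ingredients, none of which appears in your plan:
\begin{enumerate}[label=(\roman*)]
\item the induction on $m$ over transverse complete intersections $Z(P_1,\dots,P_m)$;
\item regular balls on which $T_zZ$ is nearly a fixed plane $V$;
\item the transversal/non-transversal split relative to $V$, with a second partitioning of the projected submeasure when at least $m$ waves are transversal;
\item Lemma \ref{Zan}, to show that a tube is non-tangential in only $O(D^n)$ of the cubes $Q^l$.
\end{enumerate}
Your tangency to a single $(n-1)$-plane is only the $m=1$ instance of this dichotomy.
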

We remark that one of the key novelties of this result is the clarification of the precise role that curvature plays in this problem, in particular the fact that Theorem \ref{mainT} applies to all surfaces satisfying the curvature and transversality conditions. Most of the previous works mentioned above focus on explicit surfaces such as paraboloids or cones, in which case \eqref{curva} is automatically satisfied under reasonable hypotheses. For example, in the case of the paraboloid, \eqref{curva} is implied by \eqref{trans}. While in the context of  multilinear restriction theory this geometric flexibility can be seen as a nice theoretical clarification of the role played by transversality and curvature, it turns out that it plays a crucial role in applications to the study of nonlinear dispersive PDEs. Unexpectedly, it turns out that even the very general setup of this theorem does not suffice for the particular application we have in mind.

In this paper we investigate a variation of this result with further relaxed geometric assumptions on the underlying hypersurfaces, which is motivated by a concrete application to the study of the quadratic nonlinear Schrödinger equation in $\R^{3+1}$, done in an upcoming companion paper \cite{BeBoCh}.

We keep the transversality condition \eqref{trans} as is, but we modify the curvature condition \eqref{curva}. In order to avoid unnecessary technical distractions, we assume that that the given hypersurfaces can be parametrized as graphs $S_i=\{(\xi,\tau): \xi \in D_i\subset \R^{n-1}, \tau \in \R\}\subset\R^n$ where $\tau=\varphi_i(\xi)$ with smooth $\varphi_i$. Then we assume that there exists $0 < \nu_i \leq 1$ such that 
\begin{equation} \label{sm23}
\|\partial^\alpha \varphi_i\|_{L^\infty(D_i)} \les \nu_i, \quad \forall |\alpha|=2,3. 
\end{equation}
As a consequence this gives the following estimate for the shape operator
\begin{equation} \label{Scurv}
| S_{N(\zeta_i)} v | \leq \nu_i |v|, \quad \forall v \in T_{\zeta_i} S_i, \quad \forall \zeta_i \in S_i,
\end{equation}
which implies that the principal eigenvalues of $S_{N(\zeta_i)}$ are bounded from above by $\nu_i$. 

Then we impose the corresponding variant of \eqref{curva}; there exists $c >0$ such that
\begin{equation} \label{curvad}
| N_1(\zeta_1) \wedge ... \wedge N_{k}(\zeta_k) \wedge S_{N(\zeta_i)} v | \geq c \nu_i | v|, 
\end{equation}
for every $v \in T_{\zeta_l} S_l$ with the property $v \perp N_i(\zeta_i), \forall i=1,...,k$. Practically, we relax \eqref{curva} by allowing each surface $S_i$ to have its own curvature parameter $\nu_i$ and do not require a universal bound from below for these parameters. We remark that as a consequence of \eqref{Scurv} and \eqref{curvad} we obtain the following
\begin{equation} \label{curvad2}
| S_{N(\zeta_i)} v | \geq c \nu_i |v|, \qquad |N_1(\zeta_1) \wedge ... \wedge N_{k}(\zeta_k) \wedge \frac{S_{N(\zeta_i)} v}{|S_{N(\zeta_i)} v|} | \geq c | v|
\end{equation}
for every $v \in T_{\zeta_l} S_l$ with the property $v \perp N_i(\zeta_i), \forall i=1,...,k$. In other words, these vectors $v$ have the full possible curvature $| S_{N(\zeta_i)} v | \approx \nu_i |v|$ (in light of \eqref{Scurv}), 
and $S_{N(\zeta_i)} v$ keeps the original transversality properties on the set of vectors $N_i (\zeta_i)$ which $v$ originally had. 

Our main goal is to investigate the behavior of the estimate \eqref{mainE} in this new context. One of the key observations is that the estimate cannot improve upon the generic $L^\frac2{k-1}$ multilinear estimate up to a basic scale: roughly speaking, that of the parallelepiped of size $\nu_1^{-1} \times ... \times \nu_k^{-1}$, where the parallelepiped is constructed in the directions $N_1 \times ... \times N_{k}$, with $N_i$ being a normal to $S_i, i\in\{1,...,k\}$. To be more precise, one could derive $L^p$ estimates with $p <\frac2{k-1}$ but this would lose factors of type $\prod_{i=1}^{k} \nu_i^{-\beta}$, which we are trying to avoid.

Without restricting the generality of the argument we can assume $N_i=e_i$, meaning that is there a choice of $\zeta_i \in S_i$ with $N_{\zeta_i} S_i =e_i, i = 1,...,k$. We let $\mathcal{P}$ be the set of space-time parallelepipeds with sides parallel to the coordinate axes and whose sides have length $\nu_i^{-1}$ in the direction of $e_i, 1\leq i\leq k$ respectively, and $1$ in the direction of $e_i$, $i=k+1,...,n$ - by convention, we let $\nu_i=1$ for $k+1\leq i\leq n$. Explicitly, $\mathcal{P}$ is defined by
\begin{align}\label{Pdef}
    \mathcal{P}=\{ \prod_{i=1}^k [\nu_i^{-1}l_i, \nu_i^{-1}(l_i+1)] ; l=(l_1,...,l_n)\in \Z^n\}.
\end{align}
Using this notation, we introduce the norm or quasi-norm
\[
\|F\|_{\ell^p_{\mathcal{P}} L^q}= \left( \sum_{P \in \mathcal{P}} \|F\|^p_{L^q(P)} \right)^\frac1p,
\]
for any $0<p,q \leq \infty$. For a measurable set $A \subset \R^n$, we use the notation $\ell^p_{\mathcal{P}} L^q(A)$ to denote the norm or quasi-norm above restricted to the set $A$. 

The main result of this paper is the following.
\begin{theo} \label{mainTd} Let $k=n-1$ and assume $S_i$ are as above. There exists $C(\bm{\nu})=C(\nu_1,...,\nu_{k})$ such that for any $p$ with $\frac{2(n+k)}{k(n+k-2)}< p \leq \infty$, the following holds true
\begin{align}\label{mainme}
    \| \prod_{i=1}^k  \mathcal{E}_i f_i\|_{\ell^p_\mathcal{P} L^{\frac{2}{k-1}}} \leq C(\bm{\nu}) \prod_{i=1}^k \|f_i\|_{L^2}
\end{align}

\end{theo}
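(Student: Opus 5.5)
We would reduce the estimate to the non-degenerate Theorem~\ref{mainT} by rescaling, combined with an induction-on-scales / polynomial partitioning scheme run at the level of the parallelepipeds in $\mathcal{P}$. The basic observation is anisotropic: set $x_i=\nu_i^{-1}y_i$ for $1\le i\le k$ and $x_n=y_n$. Correspondingly the frequencies dilate by $\nu_i$ in the directions $e_i$. Since $N_i(\zeta_i)=e_i$ at a reference point, $S_i$ is locally a graph over the directions orthogonal to $e_i$. One checks that after rescaling, \eqref{sm23}, \eqref{Scurv} and \eqref{curvad} turn into the hypotheses \eqref{trans}, \eqref{curva} with constants independent of $\bm\nu$. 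Here the factor $\nu_i$ in the second fundamental form is exactly absorbed, because the curvature of $S_i$ that matters lives in the good directions $e_j$, $j\ne i$, and \eqref{curvad2} keeps $S_{N}v$ transversal. The transversality $|e_1\wedge\dots\wedge e_k|$ is preserved up to the displacement of normals, which is of order $\nu_i$ and hence harmless.

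Under this change of variables each $P\in\mathcal{P}$ becomes a unit cube $Q$. The $L^{2/(k-1)}(P)$ norm becomes $(\prod\nu_i)^{-(k-1)/2}\|\cdot\|_{L^{2/(k-1)}(Q)}$, and the $L^2$ norms of $f_i$ rescale by explicit powers of $\nu$. All of these are absorbed into $C(\bm\nu)$, which the statement permits. The estimate therefore becomes
\[
\| \prod_i \tilde{\calE}_i g_i\|_{\ell^p_{Q}L^{2/(k-1)}}\lesssim \prod_i\|g_i\|_{L^2}
\]
over unit cubes, for rescaled surfaces $\tilde S_i$ obeying \eqref{trans}, \eqref{curva} uniformly. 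On a unit cube, by uncertainty (Bernstein, frequency support in a bounded set), $\|F\|_{L^{2/(k-1)}(Q)}$ is comparable to $\|F\|_{L^p(\tilde Q)}$ for any $p$, after a harmless enlargement or weight, using the locally constant property of $\prod\tilde\calE_i g_i$. Hence $\ell^pL^{2/(k-1)}\lesssim L^p$ with a smooth weight, and Theorem~\ref{mainT} gives the bound.

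The main obstacle is that the rescaled surfaces are not uniformly smooth in the sense of \eqref{smooth}. After dilation the graph functions $\tilde\varphi_i$ have second and third derivatives of size $O(1)$ only in the mixed good directions. Their domains are also large (size $\nu_j^{-1}$ in direction $j$), and the mutual scalings differ between surfaces, so a surface with normal $e_i$ is stretched differently in its own tangent directions. We would handle this by a partition of each $g_i$ into unit-size caps and apply Theorem~\ref{mainT} to each $k$-tuple of caps. Theorem~\ref{mainT}'s constant depends only on the $C^N$ bounds and on \eqref{trans} and \eqref{curva}, all uniform on unit caps, provided the third-derivative bound \eqref{sm23} is verified in the rescaled frame. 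We would then resum the caps. Orthogonality in $\ell^2$ across caps is available because $p\ge 2/(k-1)$ exceeds the bilinear-type threshold only in an $\ell^p$ sense, so we may lose summability. Our first attempt at this resummation is a Córdoba/C\'ordoba–Fefferman-style $L^2$ orthogonality on each unit cube combined with Hölder in the number of caps, whose count is a power of $\nu^{-1}$ and is again absorbed into $C(\bm\nu)$. If that count is not finite (unbounded domains), we would instead localize $D_i$ to bounded sets, which is implicit in compactness, making the number of caps $\lesssim\prod\nu_j^{-1}$.
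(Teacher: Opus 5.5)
Your plan has a genuine gap in the constant. You absorb positive powers of $\nu_i^{-1}$ into $C(\bm\nu)$: from Bernstein on rescaled cubes, and from H\"older over polynomially-in-$\nu^{-1}$ many unit caps. In the paper, however, $C(\bm\nu)$ is not an arbitrary function of $\bm\nu$. It is the best constant in the localized generic estimate \eqref{LfT}, which is $\lesssim_\epsilon\max_i\nu_i^{-\epsilon}$ by \eqref{Cnu}, and the theorem asserts that no further $\bm\nu$-losses occur. Corollary \ref{cor qnls} depends on this: there $\nu_3\approx M^{-1}$ and only $\langle M\rangle^\epsilon$ is allowed. The paper also stresses that the $\ell^p_{\mathcal P}L^{2/(k-1)}$ formulation exists precisely to avoid factors $\prod_i\nu_i^{-\beta}$.

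If such factors were allowed, the statement would be immediate and no rescaling would be needed:
\begin{itemize}
\item \eqref{curvad} gives \eqref{curva} with constant $c\min_i\nu_i$, so Theorem \ref{mainT} applies to the original surfaces.
\item In the original coordinates $F=\prod_i\calE_if_i$ has bounded Fourier support, hence is locally constant at unit scale.
\item Therefore $\|F\|_{\ell^p_{\mathcal P}L^{2/(k-1)}}\le\|F\|_{\ell^p_{\mathrm{unit}}L^{2/(k-1)}}\lesssim\|F\|_{L^p}$ for $p\le\frac2{k-1}$, the first inequality being $\ell^p\subset\ell^{2/(k-1)}$ over the unit cubes in each $P$.
\item Larger $p$ follow by monotonicity in $\ell^p$.
\end{itemize}

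Moreover, the loss cannot be removed from any argument that factors through a global $L^p$ bound. Take all $\nu_i=\nu$ and let $\check f_i$ be adapted to $\{|x_n|\lesssim1,\ |x_j|\lesssim\nu^{-1},\ j\neq i\}$. Then $\|f_i\|_{L^2}\approx\nu^{-\frac{k-1}2}$. Since the waves do not disperse inside a single $P$, $|\prod_i\calE_if_i|\approx1$ on most of one $P$. Hence $\|F\|_{L^{2/(k-1)}(P)}\approx\prod_i\|f_i\|_{L^2}$, while $\|F\|_{L^p(P)}\approx\nu^{-k(\frac1p-\frac{k-1}2)}\prod_i\|f_i\|_{L^2}$, which blows up for $p<\frac2{k-1}$.

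Your anisotropic rescaling does not escape this. It leaves the target inequality invariant, since both sides pick up the factor $\prod_j\nu_j^{(k-1)/2}$. But it multiplies the constants in ``$\ell^pL^{2/(k-1)}\lesssim L^p$'' and ``$L^p\lesssim\prod_i\|f_i\|_{L^2}$'' by reciprocal factors, so their product still loses $\prod_i\nu_i^{-(\frac1p-\frac{k-1}2)}$.

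Concretely, your claim that $\|F\|_{L^{2/(k-1)}(Q)}\approx\|F\|_{L^p(\tilde Q)}$ on rescaled unit cubes is false. After rescaling, the frequency support has diameter $\sim\max_j\nu_j^{-1}$, and a unit cube in the original variables becomes a $\nu\times\dots\times\nu\times1$ box inside $Q$.

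Two smaller errors:
\begin{itemize}
\item The rescaled surfaces fail \eqref{smooth} even on unit caps, because derivatives of order $\ge4$ in the $e_n$ direction can be of size $\nu_i^{-1}$; \eqref{sm23} controls only orders $2$ and $3$.
\item The curvature direction that matters in \eqref{curvad} is $e_n$, orthogonal to all $N_j$, not $e_j$ with $j\neq i$.
\end{itemize}

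What is missing is the paper's mechanism, which never leaves the $\ell^pL^{2/(k-1)}$ structure:
\begin{itemize}
\item an induction on scales with polynomial partitioning for the submeasure \eqref{mudef} at the endpoint (Proposition \ref{indp});
\item wave packets along tubes of length $\nu_i^{-1}R$;
\item the $\nu_0$-quantified Lemma \ref{kL}, which turns tangency at angle $\le\nu_i\rho^{-\frac12+\delta_1}$ into Fourier localization of $\nu$-independent width $\rho^{-\frac12+\delta_1}$;
\item \eqref{LfT} as the sole source of $\bm\nu$-dependence, interpolated with \eqref{aux111};
\item the parallelepiped-adapted $\epsilon$-removal, Lemma \ref{lem epsilon removal}.
\end{itemize}
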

The estimate \eqref{mainme} states that within the parallelepipeds $P$ one has access to the (optimal) $L^\frac{2}{k-1}$ generic multilinear estimate and gains are recorded as we perform the summation across the parallelepipeds $P \in \mathcal{P}$ - this is where the curvature effects come into play. Indeed, since $\frac{2(n+k)}{k(n+k-2)} < \frac2{k-1}$, the value of $p$ is allowed to go below $\frac2{k-1}$ and it is within this range that the estimate is stronger than the generic one in $\ell^\frac2{k-1} L^\frac2{k-1}= L^\frac2{k-1}(\R^n)$. The estimate is closely connected to the geometry of the parallelepipeds $P$ which were constructed as being the ``maximal" geometric structures within which the waves do not disperse; it is maximal in the sense that, for waves originating in $P$, dispersion effects only manifest outside $P$. Within a parallelepiped $P$ the wave $\calE_i f_i$ is essentially flat in the $x_i$ direction; in other words no curvature effects are recorded.

The constant $C(\bm{\nu})$ will be discussed in more detail in the following subsection but for now the reader may assume that $C(\bm{\nu}) \les_\e \max(\nu_1^{-1},...,\nu_k^{-1})^\epsilon$ for all $\e>0$. The bound on $C(\bm{\nu})$ comes solely from the generic estimate in Theorem \ref{MB} and it is not a feature of the intrinsic arguments used to establish this theorem. We believe that the estimate holds with $C(\bm{\nu})\lesssim 1$, but this would require a revision of the theory in \cite{Be5} about the generic multilinear restriction estimate which we do not pursue here.

Stating Theorem \ref{mainTd} in terms of the $\ell^p_\mathcal{P} L^{\frac{2}{k-1}}$ quasi-norm versus the standard $L^p$ quasi-norm is one of the main novelties in this paper. On one hand our analysis reveals a natural way in which the classical $L^p$ estimate in Theorem \ref{mainE} can be stated in this more general setup. On the other hand, stating the estimate in terms of the $\ell^p_\mathcal{P} L^{\frac{2}{k-1}}$ quasi-norm (versus the standard $L^p$) is essential in avoiding losses in $\bm{\nu}$ and this is important in applications. We remark that estimating the multilinear restriction estimate in this fashion is partially inspired by the recent work of the first author in \cite{Be-rs}, where the classical bilinear restriction estimate in $L^p$ with $p<2$ is restated in terms of the analogous $\ell^p L^2$ norm.

As mentioned previously, the primary motivation for this new set-up is its applicability to the analysis of nonlinear dispersive PDEs. In the companion paper \cite{BeBoCh}, we use the above result in the context of paraboloids to prove global well-posedness and scattering for the quadratic NLS in three spatial dimensions with small initial data in the critical Sobolev space. This specific equation was chosen as the simplest nonlinear model where the $(n-1)$-linear restriction estimate can be deployed, after developing the necessary transference theory of \eqref{mainme} to more general objects - this is crucial since free waves (such as $\calE_i f_i $) solve linear equations, not nonlinear ones. Ultimately, what is actually important in the context of PDEs is obtaining an $L^1$ global estimate, and the improved $\ell^p$ structure of \eqref{mainme} provides the room needed to obtain the desired $\ell^1_{\mathcal{P}} L^1=L^1(\R^n)$ estimate via this new transference theory. 

In the course of this work, we realized that we need to modify \eqref{curva} to accommodate degenerate curvatures. In particular the specific estimate we use is a trilinear estimate in $\R^4=\R^{3+1}$ for extension operators associated to the paraboloid, and the degenerate curvature condition appears once we study very high frequency pieces of the paraboloid. The precise statement needed in \cite{BeBoCh} is the following. 

\begin{corol}\label{cor qnls}
Let $p>\frac{14}{15}$. Let $\mathbb{P}=\{(\xi,|\xi|^2): \xi \in \R^3\}$ denote the paraboloid in $\R^4$ and let $S_i=\mathbb{P}\cap B_1(x_i)$ where $\abs{x_1},\abs{x_2}\leq 8$ and $\abs{x_3}\approx M$ such that they satisfy \eqref{trans} with $\nu\approx 1$, that is
\begin{align*}
    \abs{N(\zeta_1)\wedge N(\zeta_2) \wedge N(\zeta_3)} \gtrsim 1\quad\forall \zeta_i\in S_i.
\end{align*}
Let $\tilde{\mathcal{E}}_i$ denote the (intrinsic) extension operator associated to each $S_i$, as detailed in \eqref{Eint}. Then for all $\e>0$ there exists $C_\e>0$ such that
\begin{align}\label{eq cor estimate}
    \| \prod_{i=1}^3  \tilde{\mathcal{E}}_i f_i\|_{\ell^p_\mathcal{P} L^{1}} \leq C_\e \la M \ra ^{\e} \prod_{i=1}^3 \|f_i\|_{L^2}.
\end{align}
\end{corol}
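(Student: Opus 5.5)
We reduce Corollary \ref{cor qnls} to Theorem \ref{mainTd} with $n=4$, $k=3$. In that setting $\frac{2(n+k)}{k(n+k-2)}=\frac{14}{15}$ and $\frac{2}{k-1}=1$, so \eqref{mainme} is exactly \eqref{eq cor estimate} for $p>\frac{14}{15}$. The work is to put each $S_i$ into the graph form \eqref{sm23}, with curvature parameters $\nu_1\approx\nu_2\approx 1$ and $\nu_3\approx M^{-1}$. We then check \eqref{trans} and \eqref{curvad}, and control the dependence of $C(\bm\nu)$ and of the parametrization changes on $M$.

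\textbf{Step 1 (parametrization).} Fix points $\zeta_i^0\in S_i$. We rotate coordinates in $\R^4$ so that $N(\zeta_i^0)=e_i$, as the paper allows without loss of generality; transversality with $\nu\approx 1$ keeps this rotation well conditioned. Near $\zeta_i^0$ we write $S_i$ as a graph over the hyperplane $e_i^\perp$, $x_i=\varphi_i(\cdot)$, and cover $S_i$ by $O(1)$ such charts if needed.

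For $S_3$ the paraboloid near $|\xi|\approx M$ has normal $(2\xi,-1)/\sqrt{1+4|\xi|^2}$, which is almost horizontal. The principal curvatures there are $\approx M^{-1}$ in the two directions orthogonal to $\xi$ and $\approx M^{-3}$ in the radial direction. Derivatives of the graph function of order $2$ and $3$ are $\les M^{-1}$, giving \eqref{sm23} with $\nu_3=M^{-1}$. For $S_1,S_2$ we take $\nu_1=\nu_2=1$.

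The intrinsic operators $\tilde\calE_i$ are converted to graph extension operators by the change of variables described after \eqref{Eint}. The Jacobian is $\approx 1$ after the rotation, so the $L^2$ norms are comparable with constants uniform in $M$. The rotation does not preserve $\mathcal P$. However, the parallelepipeds for the rotated frame are comparable to unions of $O(1)$ translates, so the $\ell^p_{\mathcal P}L^1$ quasi-norms are comparable up to constants.

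\textbf{Step 2 (curvature condition \eqref{curvad}).} This is the main geometric point. Since $k=n-1=3$, the space of tangent vectors $v\in T_{\zeta_i}S_i$ with $v\perp N_1,N_2,N_3$ is one-dimensional. We must show that the shape operator applied to this direction has size $\gtrsim\nu_i$ and a component $\gtrsim\nu_i$ transversal to $\mathrm{span}(N_1,N_2,N_3)$.

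For the paraboloid, the shape operator in the $\xi$-parametrization is essentially the identity in $\xi$, scaled by $(1+4|\xi|^2)^{-1/2}$ and corrected by the metric. For $S_1,S_2$ it is uniformly positive definite, and the transversality condition yields \eqref{curvad} with $c\approx 1$, as in the standard paraboloid discussion.

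For $S_3$ we must exclude the degenerate radial direction, whose curvature is only $M^{-3}$. The radial tangent direction at $\zeta_3$ is $\approx(\hat\xi,2|\xi|)$ normalized, which is almost vertical, close to $(0,1)$. Meanwhile $N_3\approx(\hat\xi,0)$. Transversality of $N_1,N_2,N_3$ forces the common orthogonal direction $v$ to have an $O(1)$ component orthogonal to the radial direction, since otherwise $v$ would lie near the $\mathbb{P}$-radial line and we would derive a degeneracy of the wedge. This must be checked with a short computation using $|x_1|,|x_2|\le 8$ and $|\xi_3|\approx M$.

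Given this, $S_{N}v$ has size $\approx M^{-1}$ and points in the $\xi$-angular directions. We then verify that it stays transversal to $N_1,N_2,N_3$, i.e.\ the wedge in \eqref{curvad2}. This is the step I expect to be the main obstacle: one needs an explicit low-dimensional linear-algebra check, possibly after discarding or subdividing configurations into $O(1)$ pieces.

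\textbf{Step 3 (constants).} Applying Theorem \ref{mainTd} gives the bound $C(\bm\nu)\les_\e\max(\nu_i^{-1})^\e=M^\e$. The $O(1)$ localization pieces are summed using the quasi-triangle inequality for $\ell^pL^1$ with $p<1$, which costs only a constant. This yields \eqref{eq cor estimate}.
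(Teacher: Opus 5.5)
\textbf{There is a gap in Step 2, and it is the whole content of the corollary.} Your reduction to Theorem \ref{mainTd} is the same as the paper's. That is, $n=4$, $k=3$, so $p(k)=\frac{14}{15}$ and $\frac{2}{k-1}=1$, with $\nu_1\approx\nu_2\approx 1$, $\nu_3\approx M^{-1}$, and $C(\bm\nu)\les M^{\e}$. But the step that carries the content, verifying \eqref{curvad} for $S_3$, is left open in your Step 2, and you suggest it may need case subdivisions.

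The missing idea is that no separate transversality check for $S_Nv$ is needed. Because $k=n-1$, an admissible $v$ spans the orthogonal complement of $\operatorname{span}(N_1,N_2,N_3)$. Hence for every $w$,
\[
|N_1\wedge N_2\wedge N_3\wedge w|=|N_1\wedge N_2\wedge N_3|\,\frac{|\la w,v\ra|}{|v|}.
\]
With $|N_1\wedge N_2\wedge N_3|\approx 1$, condition \eqref{curvad} is therefore equivalent to $|\la S_{N(\zeta_i)}v,v\ra|\gtrsim \nu_i|v|^2$.

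In the orthonormal basis $v_r,v_\theta,v_\phi$, the shape operator of the paraboloid is diagonal with eigenvalues $-2(1+4r^2)^{-3/2}$ and $-2(1+4r^2)^{-1/2}$ (twice). These all have one sign, so for $v=c_rv_r+c_\theta v_\theta+c_\phi v_\phi$ there is no cancellation, and
\[
|\la S_Nv,v\ra|\approx r^{-3}c_r^2+r^{-1}(c_\theta^2+c_\phi^2).
\]
On $S_1,S_2$ this is $\approx |v|^2$, which is what your ``uniformly positive definite'' remark implicitly relies on. On $S_3$ it is $\gtrsim r^{-1}|v|^2\approx M^{-1}|v|^2$, provided $|c_\theta|+|c_\phi|\gtrsim|c_r|$.

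\textbf{That last inequality is justified by the wrong mechanism.} Transversality of $N_1,N_2,N_3$ cannot exclude a nearly radial $v$. Three transversal unit vectors may all lie close to the hyperplane $e_4^\perp$. Their common orthogonal direction is then close to $e_4$, hence close to $v_r=e_4+O(M^{-1})$, with no degeneracy of the wedge at all.

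What rules it out is the specific form $N_1=(-2\xi,1)/\sqrt{1+4|\xi|^2}$. Since $|\xi|\les 1$ on $S_1$, its $e_4$-component is $\gtrsim 1$, so $|v_r\cdot N_1|\gtrsim 1$. If $|c_\theta|+|c_\phi|\ll|c_r|$, then $|v\cdot N_1|\gtrsim|c_r|\approx|v|$, contradicting $v\perp N_1$. You do name $|x_1|,|x_2|\le 8$ as the relevant input, but the argument is this computation, not a wedge degeneracy.

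\textbf{A minor point on Step 1.} Your comparison of parallelepiped families is unnecessary. The $\mathcal P$ in the corollary is the family attached to the normalization $N_i=e_i$ inside Theorem \ref{mainTd}. That normalization comes from a well-conditioned linear change of variables, not a rotation; a rotation cannot in general send three non-orthogonal normals to $e_1,e_2,e_3$.
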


We note that in the companion paper \cite{BeBoCh}, this result is used in the regime $M\gtrsim 1$. With this in mind, we choose to state the estimate using the intrinsic operators $\tilde{\mathcal{E}}$ in order to avoid extra details regarding the choice of parametrization in this case. In particular, the intrinsic extension operators for $S_1$ and $S_2$ are essentially equivalent to the extrinsic extension operator for the paraboloid and when $M \approx 1$, this is true for $S_3$ too. On the other hand, if $M\gg 1$, the intrinsic extension operator for $S_3$ is essentially equivalent to the extrinsic extension operator for the reparametrized paraboloid in lateral coordinates
\begin{align*}
    \mathcal{E}f(x,t)=\int e^{i((x_1,x_2,t)\cdot (\xi_1,\xi_2,\tau)+x_3\sqrt{\tau-\xi_1^2-\xi_2^2})} f (\xi_1,\xi_2,\tau) d\xi_1 d\xi_2 d\tau,
\end{align*}
where we assume here for convenience that $N_3(\zeta_3)\approx e_3$ for $\zeta_3\in S_3$. Generally speaking, in nonlinear PDE similar geometric formulations often appear when studying frequency-localized interactions. It may be instructive for the reader to keep the set-up from this corollary in mind as a representative example of the degenerate curvature condition \eqref{Scurv}. 

We postpone the proof of this corollary to the end of Section \ref{tdg}, after all the geometric objects are properly defined.

\subsection{Supporting results}
The proof of Theorem \ref{mainTd} involves polynomial partitioning arguments, wave packet theory and a fine geometric analysis of the role of played by the shape operator. In addition, just as in \cite{Be-6}, the proof of Theorem \ref{mainTd} uses two supporting results: a localized version of the generic multilinear restriction estimate and a multilinear $\e$-removal result. 

First, we discuss the localized multilinear restriction estimate. Theorem \ref{mainTd} requires some curvature information on the hypersurfaces involved. However, in its proof, a crucial role is played by the generic multilinear restriction estimate where only transversality assumptions are made. In particular, we use a refined version of these estimates with additional localization hypotheses. Such results were investigated in a very general setup by the first author in \cite{Be1,Be5}, and by Beltran, Duncan, and Hickman in \cite{BeDuHi}.
 
Given $S$ a submanifold of $\R^n$, we let $B_\epsilon(S)=\cup_{\zeta \in S} B_\epsilon(\zeta)$ and denote by $N_\zeta S$ the normal plane to $S$ at $\zeta$. Note that $N_i(\zeta_i)$ and $N_{\zeta_i} S_i$ use similar but different notation - the first is the (unit) normal vector to $S_i$ at $\zeta_i$, while the other one is the normal plane to $S_i$ at $\zeta_i$. These two essentially coincide when $S_i$ is a hypersurface but differ when the co-dimension of $S_i$ increases.

 If $V_i$ is a $d_i$-dimensional plane for any $1 \leq i \leq k$, then by $|V_1 \wedge ... \wedge V_k|$ we mean the quantity
$|v_{1,1} \wedge ... v_{1,d_1} \wedge ... \wedge v_{k,1} \wedge ... \wedge v_{k,d_k}|$ where $v_{i,1},...,v_{i,d_i}$ is an orthonormal basis in $V_i$; it is easily seen that the defined quantity is independent of the choices of orthonormal systems. 

The transversality and small support condition are quantified as follows.

\begin{con} \label{C} Assume that we are given submanifolds $S_i' \subset S_i, i =1,...,k$, of codimension $c_{i}$,
with the property that  there exists $d >0$ such that
\begin{equation} \label{normal6}
| N_{\zeta_1} S_1' \wedge  .. \wedge N_{\zeta_k} S_k' | \geq d
\end{equation}
for all choices $\zeta_i \in S_i'$. Given $g_i \in L^2(S_i), \forall i =1,...,k$, we assume that 
 $supp g_i \subset  B_{\mu_i} (S_i') \cap S_i$, where $0 < \mu_i \ll 1$.
 \end{con}
 
We make the observation that the total number of directions in which localization is provided cannot exceed $n-k$, that is $c_1+..+c_k \leq n-k$; this follows from \eqref{normal6}. 

From \cite{Be5} we have the following result.
\begin{theo} \label{MB}
Assume $S_i, i=1,...,k$ are smooth and  $g_1,...,g_k$ satisfy Condition \ref{C}.
Then for any $\epsilon > 0$, there is $C(\epsilon)$  such that 
the following holds true
\begin{equation} \label{Lf}
\| \prod_{i=1}^{k} \calE_i g_i \|_{L^\frac{2}{k-1}(B_R)} \leq C(\epsilon) (\prod_{j=1}^k \mu_j)^{\frac12} R^\epsilon \prod_{i=1}^{k} \| g_i \|_{L^2(S_i)}.
\end{equation}
\end{theo}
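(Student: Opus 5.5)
Theorem \ref{MB} is quoted from \cite{Be5} rather than proved here, so the plan is to reconstruct its proof: reduce the localized statement to the standard generic $k$-linear restriction theory with an $R^\epsilon$ loss, and get the factor $(\prod_j \mu_j)^{1/2}$ from the localization.

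\textbf{Step 1 (uncertainty principle).} Choose coordinates in which, near a point of $S_i'$, the $c_i$ normal directions $N_{\zeta_i}S_i'$ inside $S_i$ are frozen. By \eqref{normal6}, these directions together with the normals $N_i$ can be completed, across all $i$, to a transversal frame. The support of $g_i$ lies in a $\mu_i$-neighborhood of $S_i'$. Hence $\calE_i g_i$ is, on $B_R$, essentially constant in the $c_i$ directions dual to $N_{\zeta_i}S_i'$ at scale $\mu_i^{-1}$. Equivalently, after a partition of unity in the $S_i'$ variables, $\calE_i g_i$ behaves like an extension from the lower-dimensional surface $S_i'$, averaged over a $\mu_i$-thickening.

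\textbf{Step 2 (slicing).} Write $g_i$ as a superposition over $s_i \in \R^{c_i}$, $|s_i|\le\mu_i$, of densities $g_{i,s_i}$ on the translated submanifolds $S'_{i,s_i}$. Then $\calE_i g_i=\int \calE'_{i,s_i} g_{i,s_i}\,ds_i$. Minkowski's inequality is not available for a quasi-norm when $\frac{2}{k-1}<1$. The first thing to try instead is to run the inequality at the level of the multilinear Kakeya/wave-packet argument of Bennett--Carbery--Tao \cite{BeCaTa}. Decompose each $\calE_i g_i$ into wave packets on tubes of radius $R^{1/2}$. Because of the localization, the tube directions range over a set of measure $\approx\mu_i^{c_i}$ only partially, and the relevant transversal quantity is the wedge of the tube directions with the frozen $c_i$-planes. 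Multilinear Kakeya with these tubes then gives \eqref{Lf} on $B_{R^{1/2}}$-scale balls. Cauchy--Schwarz in $s_i$ gives $\|g_i\|_{L^1_{s_i}L^2}\le \mu_i^{c_i/2}\|g_i\|_{L^2}$.

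\textbf{Step 3 (induction on scales).} Run Tao's/BCT's induction on scales, or Guth's endpoint multilinear Kakeya \cite{Gu-main}, from $R^{1/2}$ to $R$. The point to verify is that the localization is preserved at every scale, since the $\mu_i$-neighborhood is stable under the frequency decompositions; this yields the $R^\epsilon$ loss.

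The main obstacle is Step 2: obtaining exactly the gain $\mu_i^{1/2}$ per surface rather than $\mu_i^{c_i/2}$. This requires showing that the effective transversality, that of the enlarged collection of normal planes in \eqref{normal6}, lets one treat $\calE_i g_i$ as a function essentially independent of $c_i$ additional directions. The remaining geometry is then $k'$-linear with $k'=k+\sum_i c_i\le n$, in the spirit of Brascamp--Lieb/Kakeya with lower-dimensional planes \cite{BeDuHi}.
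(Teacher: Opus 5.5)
The paper does not prove this statement; it quotes it from \cite{Be5}. Judged on its own, your outline has a genuine gap: the factor $(\prod_j\mu_j)^{\frac12}$ is never actually produced. In Step 2 you correctly note that Minkowski's inequality fails in $L^{\frac{2}{k-1}}$ for $k\geq 4$. But your Cauchy--Schwarz bound $\|g_i\|_{L^1_{s_i}L^2}\leq \mu_i^{c_i/2}\|g_i\|_{L^2}$ can only enter through that Minkowski step: superpose the slices $\calE'_{i,s_i}g_{i,s_i}$, apply a $k$-linear estimate to each choice of slices, and integrate in $s$. You do not say what replaces it.

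The sentence ``multilinear Kakeya with these tubes gives \eqref{Lf} on $B_{R^{1/2}}$-scale balls'' is not an inequality either. In the Bennett--Carbery--Tao scheme the Kakeya estimate sums the contributions of the $R^{1/2}$-balls, exactly as in the proof of \eqref{aux30}; it does not bound them. Even for $k\leq 3$, where Minkowski is available, the slicing argument needs a $k$-linear $L^{\frac{2}{k-1}}(B_R)$ estimate for the lower-dimensional submanifolds $S'_{i,s_i}$ under the transversality \eqref{normal6}. That is a Brascamp--Lieb-type statement you only gesture at. Finally, the ``main obstacle'' you name is not one: for $c_i\geq 1$ and $\mu_i\leq 1$ one has $\mu_i^{c_i/2}\leq\mu_i^{1/2}$, so a gain of $\mu_i^{c_i/2}$ already implies the stated bound. (When $c_i=0$ the statement must be read with $\mu_i=1$.)

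Step 3 rests on a claim that is false in the regime that matters. Passing from scale $r$ to scale $r^{1/2}$ requires localizing in space to $r^{1/2}$-balls or tubes. This enlarges Fourier supports by $\approx r^{-1/2}$; this is the margin loss, compare the convolution with $\hat\eta^{x_0}_{R^{\frac{1+\delta}{2}}}$ in Section~\ref{swp}. Once $\mu_i\ll r^{-1/2}$, the localized pieces are only known to be supported in $B_{Cr^{-1/2}}(S_i')$. The induction hypothesis then returns $(r^{-1/2})^{\frac12}$ instead of $\mu_i^{\frac12}$, and the gain is lost.

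This is exactly the regime in which the theorem is used in this paper. In Section~\ref{sma}, \eqref{LfT} is applied on parallelepipeds of scale $r=R^{\frac12+\delta_0}=\rho^{\frac12+\delta_1}$ with $\mu_m=\rho^{-\frac12+\delta_1}=r^{-1+O(\delta_1)}$, i.e.\ localization at essentially the reciprocal of the scale.

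What your outline lacks is a step that exploits the localization below the wave-packet frequency scale. For instance, on a region of diameter $\lesssim\mu_i^{-1}$ the $\mu_i$-thickening is invisible. After factoring out the phase at the center of the region, expand the remaining $O(1)$ phase discrepancy $y\cdot(\Sigma_i(s,\cdot)-\Sigma_i(0,\cdot))$ in a rapidly convergent series; this is harmless even in quasi-norms. That writes $\calE_i g_i$ in terms of extension operators for $S_i'$ applied to functions of $L^2$ norm $\lesssim\mu_i^{c_i/2}\|g_i\|_{L^2}$. The argument has to be organized around such a step, not around an induction that assumes the localization survives isotropic wave-packet decompositions.
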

In this theorem, $B_R$ can be any ball of radius $R$ or any cube of side-length $R$. 

Assuming that, in addition to the conditions in the Theorem \ref{MB},  
the hypersurfaces $S_i, i=1,...,k$ satisfy \eqref{Scurv}, we let $C(\bm{\nu})=C(\nu_1,...,\nu_k)$ be the best constant such that 
\begin{equation} \label{LfT}
\| \prod_{i=1}^{k} \calE_i g_i \|_{\ell^{\frac{2}{k-1}}_{\mathcal{P}} L^{\frac{2}{k-1}}(Q_R)}\leq C(\epsilon) C(\bm{\nu}) (\prod_{j=1}^k \mu_j)^{\frac12} R^\epsilon \prod_{i=1}^{k} \| g_i \|_{L^2(S_i)}
\end{equation}
holds true for any $Q \in \mathcal{P}$ and $R \geq 1$, where $C(\epsilon)$ is the same constant used in Theorem \ref{MB}
and $\mathcal{P}$ is the family of parallelepipeds defined by \eqref{Pdef}; here $Q_R$
is the parallelepiped obtained by dilating $Q$ by a factor of $R$ from its center. 
By Theorem \ref{MB} we have the estimate
\begin{equation} \label{Cnu}
C(\bm{\nu}) \les \max(\nu_1^{-1},...,\nu_k^{-1})^\epsilon. 
\end{equation}
This estimate suffices for the purpose of obtaining the result claimed in this paper. However we conjecture that  $C(\bm{\nu}) \les 1$. In other words, the losses in $R$ should accumulate away from the base parallelepipeds $P \in \mathcal{P}$. It is easy to see that the estimate is lossless at the level of a single $P$ when $\mu_i \approx 1, i=1,..k$, that is one does not lose a factor of $\max(\nu_1^{-1},...,\nu_k^{-1})^\epsilon$ when estimating within a single $Q \in \mathcal{P}$. However generalizing for arbitrary $R$ and keeping track of the localization factors $\mu_i$ is not straightforward.

Second, we need an epsilon removal result for multilinear restriction estimates using the $\ell^p L^{\frac{2}{k-1}}$ quasi-norm, in the spirit of \cite{Tao-BR, Be5}. Unlike the result in Theorem \ref{Lf}, here we need curvature information.

\begin{lemma}\label{lem epsilon removal} Assume that the $S_i$ satisfy the geometric setup in Theorem \ref{mainTd} and let $p\geq\frac{2}{k}$. If for any $Q\in\mathcal{P}$ and $R>0$ we have:
\begin{align}
    \| \prod_{i=1}^k \calE_if_i \|_{\ell^p_{\mathcal{P}} L^{\frac{2}{k-1}}(Q_R)}\lesssim_\e R^\e C(\bm{\nu}) \prod_{i=1}^k \norm{f_i}_{L^2(d\sigma_i)}
\end{align}
then for all $q\geq p+(p+n+1)\frac{C}{\log(\frac{1}{\e})}$ (where $C=C(n)$) we have:
\begin{align}
    \| \prod_{i=1}^k \calE_if_i\|_{\ell^q_{\mathcal{P}} L^{\frac{2}{k-1}}(\R^n)}\lesssim C(\bm{\nu}) \prod_{i=1}^k \norm{f_i}_{L^2(d\sigma_i)}
\end{align}
\end{lemma}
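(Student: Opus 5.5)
We will follow Tao's $\epsilon$-removal scheme from \cite{Tao-BR}, in the multilinear form used in \cite{Be5}, adapted to the quasi-norm $\ell^p_{\mathcal{P}} L^{\frac{2}{k-1}}$. The first step is to reduce, by duality and homogeneity, to a restricted weak-type statement. Normalize $\|f_i\|_{L^2}=1$. It suffices to bound $\sum_{P\in\Omega}\|\prod_i\calE_i f_i\|^{p'}_{L^{\frac2{k-1}}(P)}$ for an arbitrary finite union $\Omega$ of parallelepipeds $P\in\mathcal{P}$, with a gain in $|\Omega|$ (the number of $P$'s). Concretely, we aim for
\[
\|\prod_i \calE_i f_i\|_{\ell^p_{\mathcal{P}} L^{\frac{2}{k-1}}(\Omega)} \lesssim C(\bm{\nu}) \, |\Omega|^{C/\log(1/\e)}.
\]
Dyadic pigeonholing over the level sets of $P\mapsto \|\prod\calE_i f_i\|_{L^{2/(k-1)}(P)}$ then upgrades this to the strong-type $\ell^q$ bound for $q$ slightly above $p$. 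This is where the loss $(p+n+1)\frac{C}{\log(1/\e)}$ in the exponent arises. Since $p\ge 2/k$, the quasi-triangle inequality costs only fixed constants, and the $\ell^p$ sums behave subadditively.

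The key geometric ingredient is a sparse-covering lemma in the rescaled geometry. We apply the linear change of variables $x_i\mapsto \nu_i x_i$ for $1\le i\le k$, which turns each $P\in\mathcal{P}$ into a unit cube. Tao's covering lemma then gives the following: any collection of $|\Omega|$ such parallelepipeds is covered by $O(|\Omega|^{C/N})$ families $\{Q_{j,R_m}\}$, each of which is $R$-sparse. Here the dilates have size $R_m\le |\Omega|^{C^N}$, and distinct members of a family are separated by at least $(R_m|\Omega|)^{C}$ in the rescaled metric. On each sparse family we will prove that the hypothesized local estimate on each $Q_{j,R}$ sums essentially orthogonally, losing nothing. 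We then choose $N\sim\log(1/\e)$ so that the factor $R^\e\le |\Omega|^{\e C^N}$ is at most $|\Omega|^{C/\log(1/\e)}$, which balances against the $|\Omega|^{C/N}$ coming from the number of families.

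The main obstacle is the orthogonality step for a sparse family, and we need it with constants uniform in $\bm{\nu}$. On each $Q_{j,R}$ we replace $\calE_i f_i$ by $\calE_i(f_i * \check\phi_{j})$, where $\phi_j$ is a bump adapted to the dual parallelepiped of $Q_{j,R}$ and modulated to its center; the error decays rapidly away from $Q_{j,R}$. The local hypothesis then gives
\[
\sum_j \|\prod_i\calE_i f_i\|^p_{\ell^p L^{\frac2{k-1}}(Q_{j,R})}\lesssim (R^\e C(\bm{\nu}))^p\sum_j\prod_i\|f_i*\check\phi_j\|_{L^2}^p .
\]
Since $p\ge 2/k$, Hölder in $j$ reduces this to bounding each $\sum_j\|f_i*\check\phi_j\|_{L^2}^2$. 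That sum is controlled by $\|f_i\|_{L^2}^2$ times the norm of $\sum_j \widehat{\sigma_i}$ restricted to the separated set, via a $TT^*$ argument. Here we must use the decay of $\widehat{d\sigma_i}$ coming from the degenerate curvature \eqref{Scurv}–\eqref{curvad}: in the original coordinates it decays only like $(\nu_i|x_i|)^{-1/2}$ in the curvature direction. The rescaled sparseness (separation measured in units of $\nu_i^{-1}$ along $e_i$) is exactly what makes this decay effective, giving a geometric-series bound independent of $\bm{\nu}$.

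Verifying this decay estimate for the transversal curvature direction under only \eqref{curvad} needs care. It is why the curvature hypothesis of Theorem \ref{mainTd} enters, and why the parallelepipeds of \eqref{Pdef}, rather than cubes, must be used. Finally, the tails of the localization errors are summed using the separation, which closes the argument.
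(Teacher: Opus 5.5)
Your overall architecture is the same as the paper's: the level-set / restricted-type reduction, the anisotropically rescaled sparse covering, H\"older in $j$ using $p\ge\frac2k$, and $N\sim\log(1/\e)$. Your localization by modulated bumps also reduces, like the paper's slice estimate \eqref{aux111aa}, to the orthogonality claim \eqref{ssp}. In your normalization this reads $\sum_j \frac{\nu_i}{R}\|\calE_i f_i\|^2_{L^2(Q_j)}\les\|f_i\|^2_{L^2}$ over a sparse family, with a constant independent of $\bm{\nu}$.

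\textbf{The gap is in how you prove this step.} A $TT^*$ argument driven by the pointwise decay of $\widehat{d\sigma_i}$ does not give a $\bm{\nu}$-uniform bound.
\begin{enumerate}[label=(\roman*)]
\item Under \eqref{Scurv} and \eqref{curvad}, only one principal curvature of size $\approx\nu_i$ is guaranteed, in a direction roughly $e_n$. The others may be arbitrarily small, so the best available decay is $\langle\nu_i x_i\rangle^{-1/2}$.
\item When $Q_j$ and $Q_k$ are separated mainly along $e_i$, Schur's test (or H\"older) multiplies this decay by the measure of the parallelepiped or its slice. In either your spacetime version or the slice version, the off-diagonal bound is of the form $(RN)^{-C/2}R^{n-1}\prod_{l\neq i,\,l\le k}\nu_l^{-1}$.
\item The separation $(RN)^C$ is measured in the rescaled metric and contains no $\bm{\nu}$, so it cannot absorb $\prod_{l\ne i}\nu_l^{-1}$. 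Rescaling coordinates does not help, since the Jacobian reappears.
\item Integrating the kernel more carefully still loses powers like $\nu_l^{-1/2}$ from directions of intermediate curvature. Any argument based only on the size of the kernel is lossy.
\item Building $\bm{\nu}$ into the sparseness condition would push $\nu^{-1}$ into $R_l$, hence into $R_l^{\e}$. That is exactly the loss the lemma must avoid.
\end{enumerate}
The paper flags precisely this obstruction. Your assertion that rescaled sparseness makes the decay effective ``independent of $\bm{\nu}$'' is the step that fails.

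\textbf{What is missing} is the paper's two-case treatment of the off-diagonal terms in \eqref{mQQ}.

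Case (a): $\max_{l\neq i}\nu_l|c_l(Q_j)-c_l(Q_k)|\gg\nu_i|c_i(Q_j)-c_i(Q_k)|$. Here the kernel $K_i$ is non-stationary and decays rapidly in $\bar x_i$. Schur's test then integrates the kernel rather than multiplying by a volume, so no $\bm{\nu}$ appears.

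Case (b): the separation is dominated by the $e_i$ direction.
\begin{enumerate}[label=(\roman*)]
\item Decompose into wave packets on the parallelepiped of scale $\bar R\ge(RN)^C$ containing $Q_j\cup Q_k$. Only tubes joining $Q_j$ to $Q_k$ matter, which forces $|\pi_{\mathcal{H}^\perp}N(\zeta)|\les\bar R^{-\frac12+\delta}$.
\item Lemma \ref{kL} converts this into Fourier localization to a $\bar R^{-\frac12+\delta}$-neighbourhood of a hypersurface $\tilde M\subset U_i$ transversal to $e_n$. This is where \eqref{curvad} enters, with the quantified $\nu_0$.
\item $Q_j$ has the $\bm{\nu}$-free side length $R$ along $e_n$, so $\hat f_j$ is locally constant at scale $R^{-1}$ in that direction.
\item This yields the gain $(R\bar R^{-\frac12+\delta})^{\frac12}\les(RN)^{-C/8}$, uniformly in $\bm{\nu}$.
\end{enumerate}
Without this mechanism, or another one that uses $L^2$ oscillation together with the location of the $\bm{\nu}$-free direction, your orthogonality step does not close.
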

Due to the usage of the quasi-norm and the degenerate curvature assumption \eqref{curvad}, we cannot directly use these aforementioned epsilon removal lemmas. Instead, in Section \ref{section 6}, we prove this lemma by generalizing the argument from \cite{Be5} to take into account the geometry of the parallelepipeds.

\subsection{Notation}

We use the standard notation $A \ls B$, meaning $A \leq C B$ for some universal $C$ which is independent of variables used in this paper. By $A \ls_N B$ we mean $A \leq C(N) B$ and indicate that $C$ depends on $N$. 

The Fourier transform of a Schwartz function $f:\R^n \rightarrow \C$ is defined by
\[
\mathcal{F}(f)(\xi)=\hat f(\xi)= \int e^{-i x \cdot \xi} f(x) dx. 
\]
The inverse Fourier transform is defined by 
\[
\mathcal{F}^{-1}(g)(x)=\check g(x)= \frac1{(2\pi)^n} \int e^{i x \cdot \xi} g(\xi) d\xi.
\]
These definitions are then extended to distributions, in particular to $L^p(\R^n)$ spaces, in the usual manner. 

If $v_1,...,v_m$ are vectors in $\R^n$, by $\operatorname{span}(v_1,...,v_m)$ we mean the standard subspace of $\R^n$ spanned by $v_1,...,v_m$.

We assume that the surfaces involved in the rest of this paper have small diameter in the sense that if $S=\Sigma(U)$ is a parametrization then $U \subset \R^{n-1}$ has small diameter in the classical sense. In particular, for every $i=1,...,k$ the following holds true:
\begin{equation} \label{cN}
|N_i(\zeta_1) - N_i(\zeta_2)| \leq c \nu_i, \quad \forall \zeta_1, \zeta_2 \in S_i,
\end{equation}
where $0 < c \ll 1$ is a constant small enough. This assumption does not affect our analysis as we can decompose each surface into finitely many pieces of small diameter, run the argument with the above setup, and then sum up the result on these pieces. 

For convenience, we recall the definition of the parallelepipeds from \eqref{Pdef}. 
\begin{align*}
    \mathcal{P}=\{ \prod_{i=1}^k [\nu_i^{-1}l_i, \nu_i^{-1}(l_i+1)] ; l=(l_1,...,l_n)\in \Z^n\}.
\end{align*}
Given a parallelepiped $P \in \mathcal{P}$ we define $c(P)=(\nu_1^{-1} (l_1 + \frac12),..., \nu_n^{-1}(l_n +\frac12))$ to be the center of $P$. For a given $R>0$ we also define $P_R$ to be the parallelepiped obtained by dilating $P$ from its center by a factor of $R$. We let $\mathcal{P}_R$ denote the set of such parallelepipeds at scale $R$, that is
\begin{align*}
    \mathcal{P}_R=\{ \prod_{i=1}^k [R \nu_i^{-1}l_i, R \nu_i^{-1}(l_i+1)] ; l=(l_1,...,l_n)\in \Z^n\}.
\end{align*}

\subsection{Acknowledgments}

We would like to thank Vitor Borges for helpful conversations, in particular in relation to our joint work in the companion paper \cite{BeBoCh}.

\subsection{AI declaration}

All the ideas and arguments in this paper were conceived and written by the authors. No AI was used in the writing or editing of this manuscript. The authors intend to use Gemini for additional spelling and grammar checks in an updated version of this paper.

\section{Tools from differential geometry} \label{tdg}

We first recall some basic facts about the shape operator that can be found in more detail in any classic differential geometry textbook, see for instance \cite{doCa}. 

Given a hypersurface  $S \subset \R^{n}=\{(\xi,\tau): \xi \in \R^{n-1}, \tau \in \R\}$ parametrized by $\tau=\varphi(\xi), \xi \in D \subset \R^{n-1}$, we define the Gauss map $g: S \rightarrow \S^{n-1} \subset \R^{n}$ ($\S^{n-1}$ is the unit sphere in $\R^{n}$) by 
$g(\zeta)=\frac{(-\nabla \varphi(\xi), 1)}{|(-\nabla \varphi(\xi), 1)|}$ where $\zeta=(\xi,\varphi(\xi)) \in S$. Since
$T_\zeta(S)$ and $T_{g(\zeta)} \S^{n-1}$ are parallel, we can identify them, and define $ d g_\zeta: T_\zeta S \rightarrow T_\zeta S$
by $d g_\zeta v= \frac{d}{dt} (N (\gamma(t)))|_{t=0}$ where $\gamma \subset S$ is a curve with $\gamma(0)=\zeta, \gamma'(0)=v$.  
The shape operator $S_{N(\zeta)}: T_\zeta S \rightarrow T_\zeta S$ is defined by 
\[
S_{N(\zeta)} = - d g_\zeta,
\]
 where we keep the subscript 
$N(\zeta)$ to indicate that the shape operator depends on the choice of the normal vector field at $S$. It is known that $S_{N(\zeta)}$ is symmetric, therefore there exists an orthonormal
basis of eigenvectors $\{e_i\}_{i=1,n-1}$ of $T_\zeta S$ with real eigenvalues $\{\lambda_i\}_{i=1,n-1}$. Locally $S$ is orientable and we assume
a consistency with the orientation in $\R^{n}$, that is $\{e_1,...,e_{n-1}\}$ is a basis in the orientation of $S$ and $\{e_1,...,e_{n-1},N(\zeta)\}$ 
is a basis in the orientation of $\R^{n}$. Then $e_i$ are the principal directions and $\lambda_i=k_i$ are the principal curvatures of $S$ (to be more precise they are the curvatures of the embedding $\on{id}: S \rightarrow \R^{n}$, where $\on{id}$ is the identity mapping). The Gaussian curvature is defined by $\det S_N=\prod_{i=1}^{n-1} \lambda_i$. 

As can be seen from the above notation, we use the letter $S$ to denote both hypersurfaces and the shape operator. 
However the shape operator notation will always occur together with the normal, that is $S_N$, and this clarifies any ambiguity in what $S$ stands for. 

Using the inclusion $i: S \rightarrow \R^n$, we obtain that $T_\zeta S \subset T_\zeta \R^n$ and by the standard identification $T_\zeta \R^n \cong \R^n$.  Using this identification allows us to write down various objects such as $N_1(\zeta_1) \wedge ... \wedge N_{k}(\zeta_k) \wedge S_{N(\zeta)} v$ that appears in \eqref{curva}. This identification will be tacitly used throughout the rest of the paper. 

Given any smooth hypersurface we note that for every $m \in \{1,...,n-1\}$, there exists $C_m$ such that the following holds true:
\begin{equation} \label{volc}
| S_{N(\zeta)} v_1 \wedge ... \wedge S_{N(\zeta)} v_m | \leq C_m | v_1 \wedge ... \wedge v_m |,
\end{equation}
for any $v_1,...,v_m \in T_\zeta S$. This follows from \eqref{smooth}; 
in fact $C_m$ depends only on the derivatives up to second order in \eqref{smooth}. 

If $S$ is a (more general) submanifold of $\R^n$, then we use $N_\zeta S$ to denote the normal plane at $\zeta \in S$. In the case when
$S$ is a hypersurface, $N(\zeta)$, the choice of unit normal to $S$ at $\zeta$ mentioned above, is the unit vector that spans $N_\zeta$. 
This hopefully clarifies any confusion that may occur due to the similarity in notation between the two entities.  

We recall the following supporting result from \cite{Be-6}. 

\begin{lemma}[{\cite[Lemma 2.1]{Be-6}}] \label{vols}
\quad
\begin{enumerate}[label=(\roman*)]
    \item Assume that $v_1,...,v_m \in \R^n$ are unit vectors such that $|v_1 \wedge .. \wedge v_m| \leq c^m$ for some $0 \leq c \leq 1$. Then there exists a unit vector $\alpha=(\alpha_1,...,\alpha_m) \in \R^m$ with the property that
    \begin{equation}
        |\sum_{i=1}^m \alpha_i v_i| \leq c.
    \end{equation}

    \item Assume that $A \in M_{n \times n}(\R)$ is a symmetric matrix with the property that $|\det A| \leq \epsilon$. Then there exists a unit vector $v \in \R^n$ with the property that $|Av| \leq \epsilon^\frac1n$. 
\end{enumerate}
\end{lemma}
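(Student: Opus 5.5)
Part (i) is a singular value statement and part (ii) an eigenvalue statement, so I would prove both through the spectral theorem.

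\emph{Part (i).} Let $M$ be the $n\times m$ matrix with columns $v_1,\dots,v_m$, and consider the Gram matrix $G=M^TM$. It is symmetric positive semidefinite with diagonal entries $|v_i|^2=1$, and by the standard Gram determinant identity $\det G=|v_1\wedge\dots\wedge v_m|^2\le c^{2m}$. Let $0\le\lambda_1\le\dots\le\lambda_m$ be the eigenvalues of $G$. Then
\[
\lambda_1^m\le\prod_{i=1}^m\lambda_i=\det G\le c^{2m},
\]
so $\lambda_1\le c^2$. Take $\alpha\in\R^m$ to be a unit eigenvector for $\lambda_1$. Then
\[
\Big|\sum_{i=1}^m\alpha_iv_i\Big|^2=|M\alpha|^2=\alpha^TG\alpha=\lambda_1\le c^2,
\]
which gives the claim. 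If the wedge vanishes, then $\lambda_1=0$ and the same argument gives a vector $\alpha$ with $\sum_i\alpha_iv_i=0$. The unit length of the $v_i$ is not even needed here; it only makes the hypothesis $c\le1$ natural.

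\emph{Part (ii).} Since $A$ is symmetric, it has an orthonormal eigenbasis $w_1,\dots,w_n$ with real eigenvalues $\mu_1,\dots,\mu_n$. Order them so that $|\mu_1|\le\dots\le|\mu_n|$. Then
\[
|\mu_1|^n\le\prod_{i=1}^n|\mu_i|=|\det A|\le\epsilon,
\]
so $|\mu_1|\le\epsilon^{1/n}$. Setting $v=w_1$ gives $|Av|=|\mu_1|\le\epsilon^{1/n}$.

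Neither step presents a real obstacle. The only point requiring care is the identity $|v_1\wedge\dots\wedge v_m|^2=\det(\langle v_i,v_j\rangle)$, which is the definition of the induced norm on $\Lambda^m\R^n$. If one prefers to avoid it, part (i) can instead be derived from the singular value decomposition of $M$: the product of the singular values of $M$ equals the wedge norm, so the smallest singular value is at most $c$, and the corresponding right singular vector serves as $\alpha$.
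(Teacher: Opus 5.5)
Your proof is correct in both parts. For (i), the Gram identity gives $\det G=|v_1\wedge\dots\wedge v_m|^2\le c^{2m}$, so the smallest eigenvalue of the positive semidefinite matrix $G$ is at most $c^2$, and its unit eigenvector is the required $\alpha$. Part (ii) is the same argument applied directly to the eigenvalues of the symmetric matrix $A$.

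The paper does not prove this lemma; it quotes it from \cite{Be-6} and only uses its conclusions, in the proof of Lemma \ref{kL}. So there is no in-paper argument to compare with, and your spectral route is the natural one.

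One further observation follows from your own remark. Your argument for (i) uses neither $|v_i|=1$ nor $c\le 1$. Apply it to the columns $a_1,\dots,a_n$ of $A$, for which $|a_1\wedge\dots\wedge a_n|=|\det A|\le\epsilon$, with $c=\epsilon^{1/n}$. This gives a unit $\alpha$ with $|A\alpha|\le\epsilon^{1/n}$. Hence (ii) is a corollary of (i), and the symmetry hypothesis on $A$ is not actually needed; equivalently, use the smallest singular value of $A$ in place of its smallest eigenvalue.
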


In the introduction we have defined a special type of neighborhood for a submanifold;  given a submanifold $S' \subset \R^n$ of codimension $m$, 
we defined $B_\mu(S')$. The main result result of this section connects a localization property of the set of normals to a hypersurface to a localization of the hypersurface in a neighborhood of a submanifold.
The localization of the normals does not automatically imply localization of the hypersurface. Indeed, if the hypersurface is flat, then all normals are equal and no inference can be made. However if the localization comes in a ``direction of curvature", then we can deduce localization. This is quantified in the following result. 

\begin{lemma} \label{kL}
Let $S$ be a smooth hypersurface obeying \eqref{smooth} and of small diameter. We assume that there exists $\nu_0 >0$ and $c \approx 1$ such that
\begin{equation}
| S_{N(\zeta)}v | \leq \nu_0 |v|, \forall v \in T_\zeta S, \forall \zeta \in S.
\end{equation}
We assume that for every $\zeta \in S$ there exists a linear subspace $V_\zeta \subset T_\zeta S$  with the following property:
\begin{equation} \label{Sv}
|S_{N(\zeta)}v \wedge w| \geq c \nu_0 |v| \cdot |w|, \quad \forall v \in V^\perp_\zeta, w \in V_\zeta,  
\end{equation}
where $V^\perp_\zeta$ is the normal space to $V_\zeta$ inside $T_\zeta S$. 
Assume that $\calH \subset \R^n$ is a subspace of dimension $n-m$ with the property that
 \begin{equation} \label{ph}
| \pi_{V_\zeta^\perp} h - h| \ll  |h|, \quad \forall h \in \calH^\perp,
\end{equation}
where $\pi_{V_\zeta^\perp}$ is the orthogonal projection onto $V_\zeta^\perp$. Given $0 < \mu \ll 1$, 
 we define the following subset of $S$:
\begin{equation} \label{Nlocal}
\tilde S: = \{ \zeta \in S: | \pi_{\calH^\perp} N(\zeta)| \leq \nu_0 \mu  \}.
\end{equation}
Then there exists a submanifold  $\tilde S'$ of $S$ of codimension $m$ and $0 < \tilde \mu \les \mu$ with the following properties: 

\begin{enumerate}[label=(\roman*)]
    \item $\tilde S \subset B_{\tilde \mu}(\tilde S') \cap S$;

    \item the normal plane to $\tilde S'$ is transversal to $V_\zeta$ in the following sense: 
    \begin{equation} \label{NS}
        |N_\zeta \wedge V_\zeta| \ges 1, 
    \end{equation}
    for any $\zeta \in \tilde S'$.
\end{enumerate}

\end{lemma}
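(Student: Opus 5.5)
We reduce the statement to an implicit function theorem for the Gauss map restricted to the directions $V_\zeta^\perp$. Since $S$ has small diameter, we work in a single coordinate chart $\tau=\varphi(\xi)$ and fix a reference point $\zeta_0\in S$. By \eqref{ph}, $\calH^\perp$ (of dimension $m$) is almost contained in $V_{\zeta}^\perp$, so $\dim V_\zeta^\perp\geq m$. We consider the map
\[
\Phi:S\to \calH^\perp,\qquad \Phi(\zeta)=\pi_{\calH^\perp}N(\zeta),
\]
so that $\tilde S=\{\zeta: |\Phi(\zeta)|\le \nu_0\mu\}$. Its differential is $d\Phi_\zeta=-\pi_{\calH^\perp}\circ S_{N(\zeta)}$. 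The first step is to show that $d\Phi_\zeta$, restricted to the $m$-dimensional subspace $W_\zeta:=\pi_{V_\zeta^\perp}\calH^\perp\subset V_\zeta^\perp$, is invertible with
\[
|d\Phi_\zeta w|\gtrsim \nu_0|w|,\qquad w\in W_\zeta .
\]
To prove this, we note that \eqref{Sv} says $S_N$ maps $V^\perp_\zeta$ to vectors of size $\gtrsim\nu_0$ that are quantitatively transversal to $V_\zeta$. Combined with the bound $|S_N v|\le \nu_0|v|$ and the symmetry of $S_N$, this shows that the block $\pi_{V^\perp}S_N|_{V^\perp}$ is invertible with all singular values $\approx\nu_0$. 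Then \eqref{ph} lets us replace $\pi_{V^\perp}$ by $\pi_{\calH^\perp}$ on $W_\zeta$ at the cost of a small error $\ll \nu_0$.

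Given this nondegeneracy, we rescale: $\Psi=\nu_0^{-1}\Phi$ has differential of size $\approx 1$ in the $W$ directions. Moreover, its derivatives up to second order are $\lesssim 1$, because by \eqref{sm23} the second derivatives of $N$ are $O(\nu_0)$; this assumes the $\nu$-bounds on third derivatives available in the setup. Small diameter, in the form \eqref{cN}, guarantees that $W_\zeta$ varies little across $S$. We then define
\[
\tilde S'=\{\zeta\in S:\ \Psi(\zeta)=\Psi_*\}
\]
for a suitable value $\Psi_*$ with $|\Psi_*|\le\mu$; if $0$ is attained we take $\Psi_*=0$, and otherwise we take a nearby regular value, or extend $\varphi$ slightly. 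By the implicit function theorem, $\tilde S'$ is a codimension $m$ submanifold whose normal space inside $T_\zeta S$ is $\operatorname{range}(d\Psi^*)$, which is close to $W_\zeta\subset V_\zeta^\perp$. Hence the normal plane $N_\zeta\tilde S'$ in $\R^n$ is spanned by $N(\zeta)$ and vectors close to $W_\zeta$, all of which are almost orthogonal to $V_\zeta$. This gives \eqref{NS}, namely $|N_\zeta\wedge V_\zeta|\gtrsim 1$.

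For (i), we take $\zeta\in\tilde S$, so $|\Psi(\zeta)|\le\mu$. We flow from $\zeta$ along the directions $W$; since $d\Psi$ is invertible there with bounds $\approx1$ and uniform $C^2$ control, a quantitative inverse function theorem (a Newton or contraction argument) produces $\zeta'\in\tilde S'$ with $|\zeta-\zeta'|\lesssim|\Psi(\zeta)-\Psi_*|\lesssim\mu$. Thus $\tilde S\subset B_{\tilde\mu}(\tilde S')$ with $\tilde\mu\lesssim\mu$.

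The main obstacle is the linear-algebra step: extracting, from \eqref{Sv} and $|S_N|\le\nu_0$, the lower bound $\gtrsim\nu_0$ for $\pi_{\calH^\perp}S_N$ on $W_\zeta$, uniformly in $\nu_0$. We would handle it by writing $S_N$ in block form with respect to $V\oplus V^\perp$. Condition \eqref{Sv} forces the $V^\perp\to V^\perp$ block to dominate: its component of $S_Nv$ is $\gtrsim\nu_0|v|$, because wedging with $w\in V$ kills the $V$-component in the relevant direction. Finally, the $\ll$ in \eqref{ph} absorbs the perturbation from $\calH^\perp\neq W$.
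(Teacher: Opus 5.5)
Your overall plan is the same as the paper's: the implicit function theorem for $\Phi=\pi_{\calH^\perp}\circ N$, rescaled by $\nu_0^{-1}$, with $\tilde S'$ a level set. Your block bound $|\pi_{V_\zeta^\perp}S_{N(\zeta)}v|\ge c\nu_0|v|$ on $V_\zeta^\perp$ is also correct (take $w$ parallel to $\pi_{V_\zeta}S_{N(\zeta)}v$ in \eqref{Sv}). The gap is the next step, the claim $|\pi_{\calH^\perp}S_{N(\zeta)}w|\gtrsim\nu_0|w|$ for $w\in W_\zeta=\pi_{V_\zeta^\perp}\calH^\perp$.

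Condition \eqref{ph} says that $\calH^\perp$ lies almost inside $V_\zeta^\perp$, not that it almost equals it. So when $\dim V_\zeta^\perp>m$ you cannot trade $\pi_{V_\zeta^\perp}$ for $\pi_{\calH^\perp}$. What you actually need is invertibility of the compression $\pi_{W}B|_{W}$ of the symmetric block $B=\pi_{V^\perp}S_N|_{V^\perp}$, and compressions of an invertible indefinite symmetric operator can vanish. For example, take $n=4$, $\varphi(\xi)=\frac{\nu_0}{2}(\xi_1^2-\xi_2^2)$ near $0$, $V_\zeta=\on{span}(e_3)$ and $\calH^\perp=\on{span}(e_1+e_2)\subset V^\perp$. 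All hypotheses hold, but $N\cdot(e_1+e_2)$ is a positive multiple of $\nu_0(\xi_2-\xi_1)$, so $d\Phi(e_1+e_2)=0$. Two things then fail:
\begin{enumerate}
\item Your Newton step along $W$ makes no progress.
\item $\tilde S'=\{\xi_1=\xi_2\}$ has normal $e_1-e_2$ inside $T_0S$, which is orthogonal to $W$ rather than close to it, so your justification of (ii) fails too.
\end{enumerate}
If $\dim V_\zeta^\perp=m$, as in the main argument (Section~\ref{sma}) where both equal $1$, then $W_\zeta=V_\zeta^\perp$ and $\pi_{\calH^\perp}\approx\pi_{V_\zeta^\perp}$, and your argument does go through.

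The missing idea is to choose the transverse directions so that the relevant $m\times m$ matrix is positive definite. The paper takes $\tilde e_i\in V_{\zeta_0}^\perp$ with $S_{N(\zeta_0)}\tilde e_i=\nu_0\pi_0e_i$, where $\pi_0$ is the orthogonal projection onto $S_{N(\zeta_0)}V_{\zeta_0}^\perp$. In these directions the Jacobian of $(N\cdot e_j)_j$ is $\nu_0[\pi_0e_i\cdot\pi_0e_j]$. This is a Gram matrix, and \eqref{wecon} makes it uniformly positive definite; that positivity is exactly what defeats the possible indefiniteness of $S_N$. The transversality $|\tilde e_1\wedge\dots\wedge\tilde e_m|\gtrsim 1$ then yields (i) and (ii).

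Equivalently, in your own framework, you can proceed as follows:
\begin{enumerate}
\item Prove the adjoint bound $|S_N\pi_{T_\zeta S}h|\gtrsim\nu_0|h|$ for $h\in\calH^\perp$; this does follow from your block bound and \eqref{ph}.
\item Conclude that $d\Phi_\zeta$ is invertible on $\on{range}(d\Phi_\zeta^*)=S_N\pi_{T_\zeta S}\calH^\perp$, with inverse of norm $\lesssim\nu_0^{-1}$.
\item Run the implicit function and Newton arguments in those directions instead of $W_\zeta$.
\item Obtain (ii) from $|\pi_{V_\zeta^\perp}y|\gtrsim|y|$ for $y$ in that range.
\end{enumerate}
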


The subspaces $V_\zeta^\perp$ in the above statement should be thought as the natural replacement of $\calH^\perp$ in $T_\zeta S$; we need to work with such a substitute since the result would not be useful if we were to impose the condition $\calH^\perp \subset T_\zeta S, \forall \zeta \in S$. The quantification of the role of the parameter $0 < \nu_0 \les 1$ is important in this paper; the corresponding result in Lemma 2.2 in \cite{Be-6} contains the result when $\nu_0 \approx 1$.

\begin{proof} We start the argument with an important consequence of \eqref{Sv}. We fix $\zeta \in S$ and let $S_{N(\zeta)} V_\zeta^\perp \subset T_\zeta S$ be the image of of $V_\zeta^\perp$ under $S_{N(\zeta)}$; $S_{N(\zeta)} V_\zeta^\perp$ is itself a subspace. We will make use of two projectors: $\pi_{V_\zeta^\perp}: T_\zeta S \rightarrow V_\zeta^\perp$, the orthogonal projector onto $V_\zeta^\perp$, and  $\pi_\zeta: T_\zeta S \rightarrow S_{N(\zeta)} V_\zeta^\perp$, the orthogonal projection onto $S_{N(\zeta)} V_\zeta^\perp$.

We claim that there exists $c_1 > 0$ such that the following holds true:
\begin{equation} \label{wecon}
| \pi_{\zeta} e | \geq c_1 |e|
\end{equation} 
for any $e \in V_\zeta^\perp$. We argue by contradiction: assume \eqref{wecon} is false. This implies the existence of a unit vector $e \in V_\zeta^\perp$
such that $|e \cdot S_{N(\zeta)} v| < c_1 |S_{N(\zeta)} v| \leq c_1 \nu_0 |v|, \forall v \in V_\zeta^\perp$.  Using the symmetry of $S_{N(\zeta)}$ we obtain $| S_{N(\zeta)} e \cdot  v| <  c_1 \nu_0 |v|, \forall v \in V_\zeta^\perp$, which implies $|\pi_{V_\zeta^\perp} S_{N(\zeta)} e| < c_1 \nu_0$. Let $w = S_{N(\zeta)} e- \pi_{V_\zeta^\perp} S_{N(\zeta)} e$. This vector has the following properties:

\begin{itemize}
    \item $w \in V_\zeta$;
    
    \item Its size is bounded below by
    \[
    |w|^2= | S_{N(\zeta)} e|^2- |\pi_{V_\zeta^\perp} S_{N(\zeta)} e|^2 > \nu_0^2 - c_1^2 \nu_0^2,
    \]
    and above by
    \[
    |w|^2= | S_{N(\zeta)} e|^2- |\pi_{V_\zeta^\perp} S_{N(\zeta)} e|^2 \leq  | S_{N(\zeta)} e|^2 \leq \nu_0^2;
    \]

    \item It also satisfies
    \[
    | S_{N(\zeta)} e \wedge w|= |\pi_{V_\zeta^\perp} S_{N(\zeta)} e| \cdot |w| \leq c_1 \nu_0^2.  
    \]
\end{itemize}

Thus we obtain a contradiction with \eqref{Sv} provided that $c_1 \nu_0^2 <  \nu_0 ( \nu_0^2 - c_1^2 \nu_0^2)^\frac12$; obviously this is achievable if $c_1 >0$ is chosen small enough in \eqref{wecon}. 

We now return to the proof of the main statements in our theorem. We make two simplifying assumptions: there exists $\zeta_0 \in S$ such that $N(\zeta_0) \in \calH$ and that $\calH^\perp \subset V_{\zeta_0}^\perp$. If the first assumption fails, we modify $\mathcal{H}$ by $O(\nu_0 \mu)$ to achieve it (unless the set $\tilde S$ is void in which case there is nothing to prove). If the second assumption fails we work with 
$\pi_{V_{\zeta_0}^\perp} e_1,..., \pi_{V_{\zeta_0}^\perp} e_m$ in the argument below instead of $ e_1,..., e_m$ . 

Let $e_1,...,e_m$ be an orthonormal base in $\calH^\perp$. Since $N_0=N(\zeta_0) \in \calH$ and $e_1,...,e_m$ are all perpendicular to $\calH$, they are perpendicular to $N(\zeta_0)$ and therefore $e_1,...,e_m \in T_{\zeta_0} S$. 

Our first claim is that $(\calN)^{-1}(\calH):= \{ \zeta \in S: N(\zeta) \in \calH \}$ is a submanifold of $S$ of codimension $m$, thus effectively identifying $\tilde S'$.  To do so it is convenient to work in a system of coordinates. 

We let $\pi_0=\pi_{\zeta_0}: T_{\zeta_0} S \rightarrow S_{N(\zeta_0)} V_{\zeta_0}^\perp$ be the orthogonal projection defined above. We first note the following
\begin{equation} \label{pi0e}
|\pi_0 e_i \wedge \calH | \ges 1, \quad \forall i=1,...,m.
\end{equation}
If this were not the case, then, since $e_i \in \calH^\perp$, $|\pi_0 e_i \cdot e_i| \ll 1$ and this would be in contradiction with \eqref{wecon}. 

Now, for every $i \in \{1,...,m \}$ we have that $\pi_0 e_i \in  S_{N(\zeta_0)} V_{\zeta_0}^\perp$, therefore there exists $\tilde e_i \in V_{\zeta_0}^\perp$ with the property
$S_{N(\zeta_0)} \tilde e_i = \nu_0 \pi_0 e_i$; we add the factor $\nu_0$ so that $|\tilde e_i| \approx 1$, which we prove below. At first $\tilde e_i$ is uniquely determined modulo $\ker S_{N(\zeta_0)}$; however 
\eqref{Sv} implies $\ker S_{N(\zeta_0)} \cap V_{\zeta_0}^\perp=\emptyset$, and this makes the choice of $\tilde e_i$ unique within $V_{\zeta_0}^\perp$.

Next we establish that the system $\{\tilde e_i\}_{i=1,...,m}$ has good transversality properties. From \eqref{Sv} we have $c \nu_0 |\tilde e_i| \leq |S_{N_{\zeta_0}} \tilde e_i|= \nu_0 |\pi_0 e_i| \leq \nu_0 |e_i|= \nu_0$, therefore
\begin{equation} \label{elen}
|\tilde e_i| \leq c^{-1}, \quad \forall i=1,...,m.
\end{equation}
The set of vectors $\{ \tilde e_i\}_{i=1,...,m}$ is transversal in the following sense: there exists $\tilde c > 0$ (universal, independent of $\nu_0$) such that 
\begin{equation} \label{etrans}
|\tilde e_1 \wedge .. \wedge \tilde e_m| \geq \tilde c. 
\end{equation}
 We assume the above is false, that is $|\tilde e_1 \wedge .. \wedge \tilde e_m| < \tilde c$ and we establish that if $\tilde c$ is chose small enough then we run into a contradiction. We claim that 
 \[
 |S_{N(\zeta_0)}\tilde e_1 \wedge .. \wedge S_{N(\zeta_0)} \tilde e_m| \les  \nu_0^m |\tilde e_1 \wedge .. \wedge \tilde e_m|. 
 \]
 This is a consequence of the fact that $\nu_0^{-1} S_{N(\zeta_0)}: V_{\zeta_0}^\perp \rightarrow S_{N(\zeta_0)} V_{\zeta_0}^\perp$ is a linear operator whose matrix has determinant $\approx 1$; indeed we have $|\nu_0^{-1} S_{N(\zeta_0)} v| \approx |v|, \forall v \in V_{\zeta_0}^\perp$. 
 
  Therefore $|S_{N(\zeta_0)}\tilde e_1 \wedge .. \wedge S_{N(\zeta_0)} \tilde e_m| \les \tilde c \nu_0^m$, and 
 $|\pi_0 e_1 \wedge .. \wedge \pi_0 e_m| \les \tilde c$. Using \eqref{wecon} we obtain
 $|\frac{\pi_0 e_1}{|\pi_0 e_1|} \wedge .. \wedge \frac{\pi_0 e_m}{|\pi_0 e_m|}|  \les \tilde c$. Using Lemma \ref{volc} it follows that there is a unit vector $\alpha \in \R^m$ such that the size of
 $e=\alpha_1 \frac{e_1}{|\pi_0 e_1|} +..+\alpha_k \frac{e_m}{|\pi_0 e_m|}$ satisfies $|\pi_0 e| \les \tilde c^\frac1m$.
  On the other hand, 
 \[
 |e|^2= \sum_{i=1}^m \frac{\alpha_i^2}{|\pi_0 e_i|^2} \geq  1,
 \]
 while from \eqref{wecon} it follows that $|\pi_0 e| \ges 1$. Thus if $\tilde c$ is small enough we obtain a contradiction. This establishes \eqref{etrans}. 
  
 \eqref{elen} and \eqref{etrans} imply that  we have produced a system of vectors with $|\tilde e_i| \approx 1, i=1,...,m$ and $|\tilde e_1 \wedge .. \wedge \tilde e_m| \ges 1$. We can complete
this with a basis $\{\tilde e_i\}_{i=1,...,n-1}$ in $T_{\zeta_0} S$ with similar properties; indeed take $\{e_{m+1},...,e_{n-1}\}$ to be an orthornormal basis
to the $(span(\tilde e_1,...,\tilde e_m))^\perp$, the normal subspace to the subspace spanned by $\tilde e_1,..., \tilde e_m$. 

Since $N(\zeta_0) \perp T_{\zeta_0} S$, we also obtain a basis of vectors in $\R^n$ by adding $N_0=N(\zeta_0)$ to the system $\{\tilde e_i\}_{i=1,...,n-1}$. 
We let $\pi_{N_0}: \R^{n} \rightarrow \R^{n-1}$ be the projector along $N_0$ onto $\R^{n-1}$ which is identified with $T_{\zeta_0} S$ and where
$\{\tilde e_i\}_{i=1,...,n-1}$ forms a basis. We define the parametrization of $S$ by simply projecting $S$ onto $\R^{n-1}$ along $N_0$;  this is a good parametrization in that it obeys \eqref{smooth}. We let $U$ be the image of $S$ under this parametrization, that is $U=\pi_{N_0}(S)$. In $\R^{n-1}$ we use coordinates $(\xi_1,...,\xi_{n-1})$ with the convention that $\xi_{1}, .., \xi_{n-1}$ are the coordinates in the directions of $\tilde e_1,..., \tilde e_{n-1}$ respectively; a apparent drawback of this system of coordinates is that is not Cartesian since $\tilde e_1,..., \tilde e_m$ is not an orthonormal system, but this plays no role in our analysis.  We define 
$\xi_0=\pi_{N_0}(\zeta_0)$. The parametrization of $S$ is given by $\Sigma: U \rightarrow \R^n$ where $(\pi_{N_0} \circ \Sigma) (\xi)=\xi, \forall \xi \in U$. 
$(\calN)^{-1}(\calH)$ is given by the set of equations
\[
N(\Sigma(\xi))\cdot e_i=0, \quad \forall i=1,...,m. 
\]
We define $F: U \subset \R^{n-1} \rightarrow \R^m, F=(F_1,...,F_m)$ by $F_i(\xi): = N(\Sigma(\xi)) \cdot e_i, \forall i=1,...,m$ 
and aim to solve the equation $F(\xi)=0$. This is done using the implicit function theorem and for this purpose we compute
\[
\frac{D(F_1,...,F_m)}{D(\xi_1,...,\xi_m)} (\xi_0)= \det [S_{N(\Sigma(\xi_0))} \tilde e_i \cdot e_j]_{i,j=1,...,m} = \det [\pi_0 e_i \cdot e_j]_{i,j=1,...,m}.
\]
The matrix $[\pi_0 e_i \cdot e_j]_{i,j=1,...,m}$ is symmetric because $\pi_0$ is a symmetric operator. Our goal is to prove that 
\[
|\det [\pi_0 e_i \cdot e_j]_{i,j=1,...,m}| \ges 1. 
\]
Assume by contradiction that $|\det [\pi_0 e_i \cdot e_j]_{i,j=1,...,m}| \ll 1$. 
By Lemma \ref{vols} part ii), there exists $\alpha=(\alpha_1,...,\alpha_m) \in \R^m$ a unit vector such that 
\[
|(\sum_{i=1}^m \alpha_i \pi_0 e_i \cdot e_j)_{j=1,...,m}| \ll 1.
\]
We let $e=\sum_{i=1}^m \alpha_i  e_i$ be the unit vector in $(\calH)^\perp$ and conclude that $|(\pi_0 e \cdot e_j)_{j=1,...,m}| < \tilde \nu_3^\frac1m$.
From this we further estimate
\[
|\pi_0 e \cdot e| \leq \sum_{i=1}^m |\alpha_i| |\pi_0 e \cdot e_i| \leq |\alpha| |(\pi_0 e \cdot e_j)_{j=1,...,m}| \ll 1,
\]
and conclude with $|\pi_0 e|^2 = |\pi_0 e \cdot \pi_0 e|= |\pi_0^2 e \cdot  e|= |\pi_0 e \cdot  e | \ll 1$. On the other hand,
from \eqref{wecon} we have $|\pi_0 e|^2 \ges  |e|^2=1$, thus we obtain a contradiction.  

We can invoke the implicit function theorem to conclude that there exists an open neighborhood $U' \subset U$ of $\xi_0$ and 
$\Sigma': U' \cap \R^{n-m-1} \rightarrow \R^m$ such that
\[
\Sigma^{-1} (\calN)^{-1}(\calH) \cap U'=\{ (\Sigma'_1(\xi_{m+1},...,\xi_{n-1}), .., \Sigma'_m(\xi_{m+1},...,\xi_{n-1}), \xi_{m+1},...,\xi_{n-1}\}.
\]
This gives us the correct structure for $S'=\Sigma^{-1} (\calN)^{-1}(\calH) \cap U'$. 
The size of the neighborhood $U'$ of $\xi_0$ is uniform with respect to the choice of $\xi_0$ since it depends on parameters for which we have uniform bounds across $S$, see \eqref{smooth}, \eqref{Sv} and \eqref{wecon}; to be more precise, the result gives $U'=B_r(\xi_0) \cap U$ and the size of $r$ is uniform with respect to the choice of $\xi_0$. If we assume that the diameter of $S$ is small enough, then $U=\pi_{N_0}(S) \subset B_r(\xi_0)$
and the local solution becomes the global one.

From the above analysis it also follows that $\tilde S'=\Sigma(S')$ is a submanifold of $S$ of codimension $m$.

Next we prove that $\tilde S$ belongs to the correct neighborhood of $\tilde S'$. To do so, it suffices to prove that $\pi_{N_0} \tilde S \subset B_{\tilde \mu} S'$,
for some $\tilde \mu \les \mu$. We rely on the fact that the system $\{ \tilde e_i \}_{i=1,...,m}$ is transversal to $S'$.  We fix $\xi \in S'$, fix $i \in \{1,...,m\}$, and let $\gamma(t)=\xi+t \tilde e_i \in U, |t| \ll 1$; the range of $t$ is constrained by the size of $U$
since we impose $\gamma(t) \in U$. We have
\[
\begin{split}
N(\Sigma(\gamma(t)))-N(\Sigma(\gamma(0))) & = \int_{0}^{t} S_{N(\gamma(s))} d\Sigma(\gamma(s)) \gamma'(s) ds  \\
& = t S_{N(\gamma(0))} d\Sigma(\gamma(0)) \gamma'(0) + \nu_0 o(t) \\
&= t S_{N(\zeta_0)} d\Sigma(\xi_0) \tilde e_i + \nu_0 o(t)  \\
&=  t \nu_0 \pi_0 e_i + \nu_0 o(t). 
\end{split}
\]
In the above we used the following straightforward facts:  $d\Sigma(\xi_0)=Id$, $\|S_{N(\gamma(0))}-S_{N(\zeta_0)} \| \ll \nu_0$ (a consequence of \eqref{sm23}) and 
$\|d\Sigma(\gamma(0))-d\Sigma(\xi_0)\| \ll 1$; the last two inequalities involve matrix norms and use the small diameter property of $S$. 

From \eqref{wecon} and \eqref{pi0e}, we have that $|\pi_0 e_i| \ges 1$ and $| \pi_0 e_i \wedge \calH| \ges 1$; since $N(\Sigma(\gamma(0))) \in \calH$,
it follows that a necessary condition of $\Sigma(\gamma(t)) \in \tilde S$ is that $|t| \les \mu $. 
Since the system $\{ \tilde e_i \}_{i=1,...,m}$ is transversal to $S'$, whose co-dimension is $m$, it follows that $\pi_{N_0} \tilde S \subset B_{\tilde \mu} S'$,
for some $\tilde \mu \les \mu$.

As for ii), we notice that any unit vector $n \in N_\zeta$ has the property that its projection on the $span\{ \tilde e_1,...,\tilde e_m \} \subset V_{\zeta_0}^\perp$ 
has length bounded from below by a universal constant; this simply follows from the structure of $\tilde S'$ and the smallness of the diameter of all submanifolds involved. As a consequence $|n \wedge V_{\zeta_0}^\perp| \ges 1$, thus $|n \wedge V_{\zeta}^\perp| \ges 1$. This implies that $|N_\zeta \wedge V_\zeta| \ges 1$. 

\end{proof} 

We end this section with the proof of Corollary \ref{cor qnls}. We do so here because we prepared all the necessary geometrical background in this section.

\begin{proof}[Proof of Corollary \ref{cor qnls}]
The estimate follows immediately from Theorem \ref{mainTd}, once we check the curvature conditions \eqref{Scurv} and \eqref{curvad} with the correct numerology.

It is convenient to use spherical coordinates $(r,\theta,\phi)$ in this argument, due to the radial symmetry of the paraboloid. In particular, the paraboloid is parametrized as
\begin{align*}
    \varphi(r,\theta,\phi)=(r\cos(\theta)\sin(\phi), r\sin(\theta)\sin(\phi), r\cos(\phi), r^2)\in\R^4.
\end{align*}
The tangent plane at any point $\zeta=(r,\theta,\phi,r^2)$ has unit basis vectors
\begin{align*}
    v_r &= (\cos(\theta)\sin(\phi), \sin(\theta)\sin(\phi), \cos(\phi), 2r)\cdot\frac{1}{(4r^2+1)^{\frac{1}{2}}},\\
    v_\theta &= (-\sin(\theta), \cos(\theta), 0, 0),\\
    v_\phi &= (\cos(\theta)\cos(\phi), \sin(\theta)\cos(\phi), -\sin(\phi), 0).
\end{align*}
The shape operator in this basis is then given by the diagonal matrix
\begin{align*}
    S_{N(\zeta)}=\begin{pmatrix}
        -\frac{2}{(4r^2+1)^{\frac{3}{2}}} & 0 & 0 \\
        0 & -\frac{2}{(4r^2+1)^{\frac{1}{2}}} & 0 \\
        0 & 0 & -\frac{2}{(4r^2+1)^{\frac{1}{2}}}
    \end{pmatrix}.
\end{align*}

For fixed $i \in \{1,2,3\}$ checking the condition \eqref{curvad} amounts to establishing 
\begin{equation} \label{curvda3}
|\la S_{N(\zeta_i)} v, v \ra| \ges \nu_i |v|^2, 
\end{equation}
$\forall v \in T_{\zeta_i} S_i$ with the property $v \perp N_j, j \in \{1,2,3\} \setminus \{ i \}$, for some choice of normals $N_j$ at the surfaces $S_j$. 

If $i=1,2$ then we see that 
\[
|\la S_{N(\zeta_i)} v, v \ra| \approx |v|^2,
\]
because in the support of $S_1,S_2$ we have $|r| \les 1$. Thus we obtain \eqref{curvda3} with $\nu_1 \approx \nu_2 \approx 1$. 

Consider now the case $i=3$. If $M \approx 1$, then the same argument above applies. If $M \gg 1$ then we argue as follows. We pick $v \in T_{\zeta_3} S_3$ as detailed in \eqref{curvda3} and make the observation that since $v \perp N_j, j=1,2$ for some choices of normals $N_j$ to $S_j$, it follows that if $v=c_r v_r + c_\theta v_\theta + c_\phi v_\phi$, then we have $|c_\theta| + |c_\phi| \ges |c_r|$. Indeed, if this were not the case, that is $|c_\theta| + |c_\phi| \ll |c_r|$, then using the fact that $r \gg 1$ and $N_1=\frac{(-2 \xi,1)}{\sqrt{1+4\xi^2}}$, it follows that
$|v_r \cdot N_1| \ges 1$ and from this we obtain
\[
|v \cdot N_1| \ges |c_r| |v_r \cdot N_1| \ges 1,
\]
thus $v$ cannot be orthogonal to any normal $N_1$ at $S_1$. Using that $|c_\theta| + |c_\phi| \ges |c_r|$ we then conclude
\[
|\la S_{N(\zeta_3)} v, v \ra| \approx r^{-3} |c_r|^2 + r^{-1}(|c_\theta|^2 + |c_\phi|^2) \ges r^{-1} |v|^2.
\]
Thus we obtain \eqref{curvda3} with $\nu_3 \approx 2^{-k}$.
\end{proof}

\section{Wave packets} \label{swp}

In this section we introduce the wave packet theory for operators of type $\calE$. At its core, we use the setup from previous works of the first author, see \cite{Be2} for more details, which was inspired by the work of Tao in \cite{Tao-BW}. We make specific choices for the parameters used so as to (almost) match the construction that Guth uses in \cite{Gu-II}. There is one distinct feature in this work related to the overall degenerate curvature assumption in \eqref{Scurv}.  

Consider the extension operator $\calE f$ associated to the hypersurface $S$ that is parametrized by $\Sigma: U \subset \R^{n-1} \rightarrow \R^n$, where $\Sigma$ is smooth in the sense of \eqref{smooth}. Without restricting the generality of the argument, we assume that the parametrization is given by $\Sigma(\xi)= (\xi,\varphi(\xi))$. We also assume that we have the global bound
\begin{equation} \label{Scurv2}
| S_{N(\zeta)} v | \leq \nu |v|, \forall v \in T_{\zeta} \Sigma,
\end{equation}
for some $0 < \nu \les 1$. The only noticeable difference between the wave packet theory below and the standard one (corresponding to $\nu \approx 1$) is the effect of the parameter $\nu$ in the geometry of the tubes where the wave packets are essentially supported. 

We choose a parameter $0 < \delta \ll 1$ to be specified later. Given $R>0$, the standard wave packet decomposition 
is adapted to a ball of radius $R$, or to a strip of length $R$ in the $x_n$ direction, this being the direction of wave propagation. In our setup the wave packet decomposition will be adapted to strips of length $\nu^{-1} R$ in the $x_n$ direction; the initial construction below is in a strip $|x_n| \leq \nu^{-1} R$.  

We begin with  the two lattices: $\mathcal{L}=R^{-\frac12} \Z^{n-1} \cap U$ and $L= R^{\frac{1+\delta}2} \Z^{n-1}$. Both of these lattices are built along the directions of the standard orthonormal system of vectors $e_1,...,e_{n-1}$ in $\R^{n-1}$. 

We decompose $U$ as follows: 
\[
U= \bigcup_{\xi_0 \in \mathcal{L}} A_{\xi_0}
\]
where $A_{\xi_0}$ consists of the points in $U$ that are closer to $\xi_0$ than any other elements of $\mathcal{L}$. 
Therefore $A_{\xi_0}$ belongs to the $O(R^{-\frac12})$ neighborhood of $\xi_0$.  We have the following decomposition of $f$:
\[
f=\sum_{\xi_0 \in \mathcal{L}} \chi_{A_{\xi_0}} f. 
\]
The terms above have good frequency support and next we proceed with the spatial localization. Let $\eta_1:\R \rightarrow [0,+\infty)$ be a Schwartz function, normalized in $L^1$, that is $\| \eta_1 \|_{L^1}=1$, and with Fourier transform supported in $(-1,1)$. Given some $r>0$ we denote by $\eta_r(x)=r^{-1} \eta_1(r^{-1} x)$ and note that $\hat \eta_r$ is supported in $(-r^{-1},r^{-1})$. Based on this function, we can construct similar ones acting on any $\R^{n}$ space and with similar properties; we will abuse notation and use the same $\eta_1$ notation. Thus we define $\eta_1(x_1,...,x_n)=\prod_{i=1}^n \eta_1(x_i)$. As above we define $\eta_r(x)=r^{-n} \eta_1(r^{-1} x)$ and note that $\hat \eta_r$ is supported in the $(-r^{-1},r^{-1})^{n}$. 

For each $x_0 \in L$, 
\[
\eta^{x_0}_{R^\frac{1+\delta}2}(x) = \eta_{R^\frac{1+\delta}2} (x-x_0)
\]
and notice that, by the Poisson summation formula and properties of $\eta_0$, 
\begin{equation} \label{pois}
\sum_{x_0 \in L} \eta^{x_0}_{R^\frac{1+\delta}2} =1.
\end{equation}
We define $f_{x_0,\xi_0}= \hat \eta^{x_0}_{R^\frac{1+\delta}2}  \ast \chi_{A_{\xi_0}} f$; from the above considerations we obtain the following decomposition of $f$:
\[
f=\sum_{x_0 \in L, \xi_0 \in \mathcal{L}} f_{x_0,\xi_0}. 
\]
With $x_T \in L, \xi_T \in \mathcal{L}$ we define the tube $T:=\{ x=(x',x_n) \in \R^n: |x'-x_T + \nabla \varphi(\xi_T) x_n| \leq  R^{\frac12+\delta}, |x_n| \leq  \nu^{-1} R \}$ and denote by $\mathcal{T}$ the set of such tubes. We remark that the only difference between the standard wave packet theory (say for the Schr\"odinger equation) and what we use here is the factor of $\nu^{-1}$
appearing in the definition of tubes. If $\nu \ll 1$, then the wave packets are ``longer" which is due to the slower dispersion as a consequence of \eqref{Scurv2}.

For each $T$ as above, we define $v(T)=\frac1{\sqrt{1+|\nabla \varphi(\xi_T)|^2}}(-\nabla \varphi(\xi_T),1)$, the unit vector describing the direction of the center line of $T$. We let $f_T=f_{x_T,\xi_T}$ and recall that we have the decomposition
\begin{equation} \label{lind}
f= \sum_{T \in \mathcal{T}} f_{T}.
\end{equation}
The relevance of the tubes is that $\calE f_T$ is essentially localized within the tube $T$ (in fact in a thinner tube $T':=\{ x=(x',x_n) \in \R^n : |x'-x_T + \nabla \varphi(\xi_T) x_n| \leq  R^{\frac{1+\delta}2}, |x_n| \leq \nu^{-1} R \}$ of width $R^{\frac{1+\delta}2}$) and decays very fast outside $T$, that is
\[
\| \calE f_T\|_{L^\infty(T^c)} = O(R^{-N}). 
\]
This is a consequence of the more general inequality:
\begin{equation} \label{decay}
| \calE f_T(x)|  \ls R^{-\frac12} (1+\frac{d(x,T')}{R^{\frac{1+\delta}2}})^{-N} \|f_T\|_{L^2},
\end{equation}  
whose proof can be found in \cite{Be2}. We also recall the standard orthogonality property that this decomposition has: 
if $\calT' \subseteq \calT$ the following holds true:
\begin{equation} \label{sel}
\|  \sum_{T \in \calT'} f_T  \|^2_{L^2} \les   \sum_{T \in \calT'} \| f_T  \|^2_{L^2}. 
\end{equation}
We recall that all the above properties are meant to be read in the spatial regime $|x_n| \leq \nu^{-1} R$. By re-centering the construction at other points $p \neq 0$, the wave packet decomposition can be adapted to other strips $|x_{n}-p| \leq \nu^{-1} R$. 

In practice we have our extension operators $\calE_i f_i, i=1,..., n-1$ and each of them has a corresponding wave packet decomposition on the strip $|x_i| \leq \nu_i^{-1} R$. This creates a universal wave packet decomposition adapted to the parallelepiped $P_{2R}(0)=\{x \in \R^n: |x_i| \leq \nu^{-1}_i R, i=1,...,n\}$ centered at the origin. The above construction can be adapted to any translation of this parallelepiped, that is to any other $P_{2R}$ whose center is different than $0$. The precise use of the wave packet family will be clear from the context. For convenience, we will restrict this construction to the strips $0 \leq x_i \leq \nu_i^{-1} R$ instead, which just avoids the (harmless) extra factor of $2$ in the scale of the parallelepipeds.

\section{Tools and results from algebraic geometry} \label{TAG}

In this section we collect all the tools from algebraic geometry that we employ in this paper. The algebraic geometry setup here follows closely the one developed by Guth in \cite{Gu-I,Gu-II} - for more details on our exposition we refer the reader to \cite{Gu-II}.  

The first concept that we introduce is that of a real algebraic variety: this is the locus of common zeros of a collection of polynomials. 
More precisely, an algebraic variety
takes the form $Z(P_1,...,P_d)=\{ x \in \R^n: P_i(x)=0, \forall i=1,...,d\}$; here $P_i \in \R[X_1,...,X_n]$ are real valued polynomials. The algebraic varities do not necessarily have good geometric properties, in particular they may lack a good manifold structure. To avoid this problem, a more restrictive class is introduced. The variety $Z(P_1,...,P_{m})$ is said to be a transverse complete intersection if
\begin{equation} \label{ci}
\nabla P_1(x) \wedge ... \wedge \nabla P_{m}(x) \neq 0, \qquad \forall x \in Z(P_1,...,P_{m})
\end{equation}
A transverse complete intersection $Z(P_1,...,P_{m})$ is a smooth $n-m$-dimensional manifold. In particular, if $n=m$, we have the following result. 
\begin{theo} [{\cite[Theorem 5.8]{Gu-II}/\cite[Theorem 5.2]{CKW}}] \label{ftci}
A transverse complete intersection $Z(P_1,...,P_{n})$ is a finite set of cardinality at most  $\prod_{i=1}^n deg P_i$. 
\end{theo}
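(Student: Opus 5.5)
The plan is to reduce to a zero-dimensional Bézout-type count, via the standard argument from real algebraic geometry given in \cite{CKW}. We pass to the complexification, where the intersection count is controlled, and then use transversality to rule out positive-dimensional or multiple components on the real locus. The steps, in order, are as follows.

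First, note that \eqref{ci} with $m=n$ says the Jacobian matrix $D(P_1,\dots,P_n)(x)$ is invertible at every $x\in Z(P_1,\dots,P_n)$. By the inverse function theorem, each real zero $x$ has a neighborhood on which $(P_1,\dots,P_n)$ is a local diffeomorphism. In particular $x$ is the only zero in that neighborhood, so $Z$ is discrete. This step needs nothing but \eqref{ci}.

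Second, to get finiteness and the bound, consider the complex variety $Z_{\C}=\{z\in\C^n: P_i(z)=0,\ i=1,\dots,n\}$, which contains $Z$. Each real zero $x$ is also an isolated point of $Z_{\C}$, because the complex Jacobian $\det D(P_1,\dots,P_n)(x)\neq 0$ and the holomorphic inverse function theorem applies. Since this Jacobian is nonzero, $x$ is a simple (multiplicity one) isolated solution.

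Third, we invoke the refined Bézout theorem, in the form of Fulton's intersection theory or the version stated in \cite{CKW}: for $n$ polynomials in $n$ variables, the number of isolated points of $Z_{\C}$, counted with multiplicity, is at most $\prod_{i=1}^n \deg P_i$. This holds even if $Z_{\C}$ also has positive-dimensional components, possibly away from $\R^n$. Every real zero is an isolated point of $Z_{\C}$ of multiplicity one. Hence $\# Z\le \#\{\text{isolated points of } Z_{\C}\}\le \prod_i \deg P_i$, and in particular $Z$ is finite.

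The main obstacle is the third step. The classical Bézout theorem assumes the complex intersection is entirely zero-dimensional. Here transversality is only imposed at real points, so $Z_{\C}$ could contain positive-dimensional components not meeting $\R^n$ transversally, or not meeting it at all. The refined Bézout inequality for isolated points is exactly what handles this, so I would cite it rather than reprove it.

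If a self-contained route is preferred, there is a perturbation alternative. Perturb to $P_i-\epsilon_i$ with generic small constants, so that by Bertini/Sard the complex intersection becomes zero-dimensional, and then apply classical Bézout. The simple real zeros persist under small perturbation by the implicit function theorem, and distinct ones stay distinct. This gives the same bound for any finite subset of $Z$, hence for $Z$ itself.
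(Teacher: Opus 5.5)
Your main argument (Steps 1--3) is correct. Transversality makes every real zero a nondegenerate point of $Z_{\C}$, hence isolated and of multiplicity one, and the refined Bézout inequality bounds the number of isolated points of $Z_{\C}$ by $\prod_i \deg P_i$. This holds regardless of positive-dimensional components, including those at infinity once you pass to projective closures. The paper gives no proof of its own, only the citations \cite{Gu-II} and \cite{CKW}, and what you wrote is the standard argument for this real Bézout bound.

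One caution about the optional perturbation route. Replacing $P_i$ by $P_i-\epsilon_i$ leaves the top-degree homogeneous parts unchanged, so the projective closures can still share a positive-dimensional set at infinity. In that case the classical projective Bézout theorem, which needs the whole projective intersection to be finite, does not apply, even though the affine intersection has become zero-dimensional. For instance, $P_1=x_1^2-1$, $P_2=x_1x_2-1$, $P_3=x_1x_3-1$ is a transverse complete intersection in $\R^3$ with two points. Yet for every choice of $\epsilon_i$ the three closures contain the line $\{x_0=x_1=0\}$ at infinity.

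What that route really uses is the affine inequality ``finite $Z_{\C}$ implies $\#Z_{\C}\le\prod_i\deg P_i$''. That is essentially the refined statement again, so the route is not more self-contained than Step 3. To make it genuinely elementary, perturb the leading forms instead. With $d_i=\deg P_i$, the resultant of the top-degree parts of $P_i+t\,x_i^{d_i}$ is a polynomial in $t$ with leading coefficient $\operatorname{Res}(x_1^{d_1},\dots,x_n^{d_n})=1$. Hence for all small $t\neq 0$ there are no common zeros at infinity, so the projective intersection is finite. Classical Bézout then gives exactly $\prod_i d_i$ solutions counted with multiplicity, and your implicit-function-theorem persistence argument yields $\#Z\le\prod_i d_i$.
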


From Sard's theorem it follows that
 generically the set of common zeroes of a family of polynomials is a transverse complete intersection; the precise statement is made below. 

\begin{lemma}[{\cite[Lemma 5.1]{Gu-II}}] \label{lci}
If P is a polynomial on $\R^n$, then for almost every $c_0 \in \R$, $Z(P+c_0)$ is a transverse complete intersection.
More generally, suppose that $Z(P_1,...,P_{m})$ is a transverse complete intersection and that $P$ is another polynomial. Then for almost every $c_0 \in \R$, 
$Z(P_1,...,P_{m},P+c_0)$ is a complete transverse intersection. 
 \end{lemma}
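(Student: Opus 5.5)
The plan is to reduce both statements to Sard's theorem applied to a single polynomial map. Throughout I use that a polynomial map is smooth and that a set of full measure in $\R$ has a complement of measure zero. The lemma is quoted from \cite{Gu-II}, so the argument below is the standard one.

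\emph{The hypersurface case.} Consider $P:\R^n\to\R$. By Sard's theorem, the set of critical values $\{P(x): \nabla P(x)=0\}$ has Lebesgue measure zero in $\R$. Take $c_0$ such that $-c_0$ is a regular value. Then at every $x$ with $P(x)+c_0=0$, $x$ is not a critical point, so $\nabla(P+c_0)(x)=\nabla P(x)\neq 0$. This is exactly condition \eqref{ci} with $m=1$, so $Z(P+c_0)$ is a transverse complete intersection for almost every $c_0$.

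\emph{The general case.} Let $M=Z(P_1,\dots,P_m)$. By \eqref{ci} the gradients $\nabla P_1,\dots,\nabla P_m$ are linearly independent on $M$. The implicit function theorem then makes $M$ a smooth embedded submanifold of dimension $n-m$, with tangent space $T_xM=\{v:\nabla P_i(x)\cdot v=0,\ i=1,\dots,m\}$.
\begin{enumerate}
\item Apply Sard's theorem to the smooth restriction $P|_M:M\to\R$. This is valid on a second-countable manifold, so its critical values form a null set.
\item Fix $c_0$ with $-c_0$ a regular value of $P|_M$, and a point $x\in Z(P_1,\dots,P_m,P+c_0)$. Then $d(P|_M)_x\neq0$, i.e. $\nabla P(x)$ does not annihilate $T_xM$.
\item Since $T_xM^\perp=\operatorname{span}(\nabla P_1(x),\dots,\nabla P_m(x))$, the previous step says $\nabla P(x)$ is not in that span. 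Hence $\nabla P_1(x)\wedge\dots\wedge\nabla P_m(x)\wedge\nabla P(x)\neq0$, which is \eqref{ci} for the enlarged family.
\end{enumerate}

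\emph{Main obstacle.} The only delicate point is justifying Sard's theorem on $M$, which may be noncompact. Since $M$ is a closed subset of $\R^n$, it is second countable and can be covered by countably many coordinate charts. Applying Sard's theorem in each chart and taking the countable union of the resulting null sets of critical values settles this.
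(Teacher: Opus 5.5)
Your proof is correct and takes the same approach as the paper. The paper gives no argument of its own: it cites \cite[Lemma 5.1]{Gu-II} and notes that the statement follows from Sard's theorem, which is what you carry out, applying it to $P$ on $\R^n$ and to $P$ restricted to the manifold $Z(P_1,\dots,P_m)$.
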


The standard polynomial partitioning result used in restriction theory is the following.
\begin{theo}[{\cite[Theorem 5.5]{Gu-II}}] \label{PP}
Suppose that $W \geq 0$ is a non-zero $L^1$ function on $\R^n$. Then, for any degree $D $ the following holds:

There is a sequence of polynomials $Q_1,...,Q_S$, with $\sum_{s=1}^S \on{deg} Q_s \les D$ and $2^S \approx D^n$, satisfying the following properties.
Let $P=\prod_{s=1}^S \tilde Q_s= \prod_{s=1}^S (Q_s+c_s)$ where $c_s \in \R$. Let $O_i$ be the open sets given by the sign conditions of $\tilde Q_s$, that is
\[
O_i=\{ x \in \R^n| \operatorname{sign}(\tilde Q_s(x))=\sigma_s \}
\]
where $\sigma_s \in \{-1,+1\}$. There are $2^S \approx D^n$ cells $O_i$ and $\R^n \setminus Z(P)=\bigcup_i O_i$. 

If the constants $c_s$ are sufficiently small, then for every $O_i$ 
\[
\int_{O_i} W \approx D^{-n} \int_{\R^n} W.
\]
\end{theo}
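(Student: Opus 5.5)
The plan is to prove Theorem \ref{PP} by iterated polynomial bisection, following Guth. The three ingredients are the polynomial ham sandwich theorem (a consequence of Borsuk--Ulam), a degree count along a dyadic iteration, and a continuity argument that lets us add the small constants $c_s$.

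\textbf{Step 1 (polynomial ham sandwich).} Let $V_d$ be the space of real polynomials on $\R^n$ of degree at most $d$. Its dimension is $\binom{n+d}{n} \approx d^n$. Suppose $W_1,\dots,W_N \in L^1(\R^n)$ are nonnegative and $N < \dim V_d$. I claim there is a nonzero $Q \in V_d$ that bisects each $W_j$, that is,
\[
\int_{\{Q>0\}} W_j = \int_{\{Q<0\}} W_j, \qquad j=1,\dots,N.
\]
To prove this, identify the unit sphere of $V_d$ with $\S^{\dim V_d -1}$ and define
\[
\Phi(Q)=\Big(\int_{\{Q>0\}}W_j-\int_{\{Q<0\}}W_j\Big)_{j=1}^N .
\]
The map $\Phi$ is odd. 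It is continuous for the following reason. For $Q\neq 0$ the zero set $Z(Q)$ has Lebesgue measure zero. If $Q_m\to Q$, then $\chi_{\{Q_m>0\}}\to\chi_{\{Q>0\}}$ pointwise off $Z(Q)$, and dominated convergence applies because $W_j\in L^1$. Borsuk--Ulam then gives a zero of $\Phi$, which is the desired $Q$. Since $Z(Q)$ is null, the bisection really splits the mass of each $W_j$ into two equal halves.

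\textbf{Step 2 (iteration and degree count).} Set $W_1=W$ and apply Step 1 with $N=1$ to get $Q_1$. Suppose $Q_1,\dots,Q_{s-1}$ have been chosen, producing $2^{s-1}$ sign cells, each carrying exactly $2^{-(s-1)}\int W$. Apply Step 1 to the $2^{s-1}$ functions $W\chi_{\text{cell}}$. This yields $Q_s$ with $\deg Q_s \les 2^{s/n}$, and the refined cells each carry exactly $2^{-s}\int W$. Stop at the first $S$ with $2^S \approx D^n$. The degrees form a geometric sum,
\[
\sum_{s\le S}\deg Q_s \les \sum_{s\le S} 2^{s/n}\les 2^{S/n}\approx D .
\]
With $c_s=0$, every one of the $2^S$ cells $O_i$ satisfies $\int_{O_i}W = 2^{-S}\int W \approx D^{-n}\int W$. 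Moreover $\R^n\setminus Z(P)$ is the union of these cells, since off $Z(P)$ every $Q_s$ has a definite sign.

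\textbf{Step 3 (small perturbations).} The map $c=(c_1,\dots,c_S)\mapsto \int_{O_i(c)}W$ is continuous at $c=0$, by the same argument as in Step 1. As $c\to 0$, $\chi_{\{Q_s+c_s>0\}}\to\chi_{\{Q_s>0\}}$ pointwise outside the null set $Z(Q_s)$, and dominated convergence applies. Since there are only finitely many cells, for $c$ small enough each cell mass stays within a factor of $2$ of $2^{-S}\int W$. This gives $\int_{O_i}W\approx D^{-n}\int W$ for all sufficiently small $c_s$.

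\textbf{Main obstacle.} The delicate point is the continuity and measure-zero bookkeeping in Steps 1 and 3. The boundary sets $Z(Q)$ must carry no $W$-mass, which is exactly why $W\in L^1$ (absolutely continuous) is needed rather than a general measure. With that in hand, the remaining steps are routine counting.
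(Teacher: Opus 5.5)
Your proposal is correct and follows essentially the same route as the cited argument of Guth, which the paper also invokes when proving Theorem~\ref{PP-om}. That route is iterated polynomial ham sandwich bisection of the current cells with $\deg Q_s \lesssim 2^{s/n}$, so that the degrees sum geometrically to $O(D)$, followed by a dominated convergence argument that uses the Lebesgue-nullity of each $Z(Q_s)$ to absorb the small constants $c_s$. The only cosmetic difference is that you derive the bisection step directly from Borsuk--Ulam for $L^1$ densities, whereas the paper quotes the Stone--Tukey version for submeasures, which it needs later for the non-additive $\mu$.
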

From Lemma \ref{lci} it follows that for almost every $c_s$, $Z(\tilde Q_s)$ is a complete intersection for each $s$, which in turn implies that $Z(P)$ is generically a finite union of transverse complete intersections. More generally, if $Z(P_1,...,P_{m})$ is a transverse complete intersection and $m \leq n-1$,
then for a generic choice of constants $c_s$, $Z(P_1,...,P_{m}, \tilde Q_s)$ is a transverse complete intersection for every $s$. 

One can rephrase the result above in terms of general measures in which case the final partitioning property is a consequence of the additivity of the measure. In particular, if we let $\mu(A)=\int_A W$ for every Lebesgue measurable set $A$, then $\mu$ is additive in the sense that $\mu(A \cup B)= \mu(A) + \mu(B)$ for any measurable sets $A,B$ with $A \cap B=\emptyset$ and, in that case, the result can be stated as
\[
\mu(O_i) \approx D^{-n} \mu(\R^n). 
\]

In this paper we need an analogue of the above theorem to be applied to the $\ell^p_{\mathcal{P}}L^{\frac{2}{k-1}}(P)$ quasi-norm. To do this, we first note the generalized ham sandwich theorem for submeasures proved by Stone and Tukey \cite{StTu} (written in the form used by Guth in \cite{Gu-I}):

\begin{lemma}
Let $\mu_j$ be a submeasure on $\R^n$ for each $1\leq j\leq N$. Let $V$ be a vector space of continuous functions on $\R^n$ such that for each nonzero $f\in V$ and $1\leq j\leq N$ we have $\mu_j(Z(f))=0$. If $N<\dim(V)$ then there exists a nonzero $v\in V$ such that for each $j$ we have:
\begin{align*}
    \mu_j(\{v>0\})=\mu_j(\{v<0\})
\end{align*}
\end{lemma}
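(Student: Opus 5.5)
This is the Stone--Tukey ham sandwich theorem for submeasures. I would prove it by the Borsuk--Ulam route, as in the classical ham sandwich argument.

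First reduce to finite dimension. Since $N<\dim V$, choose a subspace $V'\subset V$ of dimension $N+1$ with basis $f_0,\dots,f_N$. Identify the unit sphere $\S^N\subset\R^{N+1}$ with nonzero elements of $V'$ via $a\mapsto v_a=\sum_j a_j f_j$. Note that $v_{-a}=-v_a$, so $\{v_{-a}>0\}=\{v_a<0\}$.

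Next define the map $\Phi:\S^N\to\R^N$ by
\[
\Phi_j(a)=\mu_j(\{v_a>0\})-\mu_j(\{v_a<0\}),\qquad j=1,\dots,N.
\]
Then $\Phi(-a)=-\Phi(a)$, so $\Phi$ is odd. If $\Phi$ is continuous, Borsuk--Ulam gives some $a$ with $\Phi(a)=0$, and $v=v_a$ is the required nonzero element. (Finiteness of the $\mu_j$ is assumed, as is implicit in the statement.)

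The main obstacle is continuity of $\Phi$, since submeasures lack additivity and we only know monotonicity and subadditivity, $\mu(A\cup B)\le\mu(A)+\mu(B)$. Fix $a$ and let $a_m\to a$. Because the $f_j$ are continuous, $v_{a_m}\to v_a$ pointwise. Hence every point of $\{v_a>0\}$ eventually lies in $\{v_{a_m}>0\}$, and any point lying in infinitely many $\{v_{a_m}>0\}$ lies in $\{v_a\ge 0\}=\{v_a>0\}\cup Z(v_a)$. Write $\{v_a>0\}=\bigcup_M E_M$ with $E_M=\{x:\ v_{a_m}(x)>0\ \forall m\ge M\}$ increasing. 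Monotonicity gives
\[
\mu(E_M)\le\mu(\{v_{a_m}>0\})\le\mu\Big(\bigcup_{m'\ge m}\{v_{a_{m'}}>0\}\Big)
\]
for $m\ge M$. The right-hand sets decrease to a subset of $\{v_a>0\}\cup Z(v_a)$, and the hypothesis $\mu_j(Z(v_a))=0$ kills the zero set via subadditivity.

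To pass to the limit we need continuity of $\mu$ along monotone sequences, from below and (for finite $\mu$) from above. This holds for the submeasures used by Guth, which are of the form $\mu(A)=\|F\|_{L^q(A)}^{q'}$ and, in the present paper, the $\ell^p_{\mathcal P}L^{\frac{2}{k-1}}$-type quantities, by monotone and dominated convergence. I would either assume it as part of the definition of submeasure, following Guth's \cite{Gu-I} convention, or verify it for the specific submeasures built from the quasi-norm. With it, $\mu_j(\{v_{a_m}>0\})\to\mu_j(\{v_a>0\})$, and similarly for the negative sets. So $\Phi$ is continuous and the argument is complete.
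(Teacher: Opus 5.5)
Your Borsuk--Ulam argument is the classical proof of the Stone--Tukey theorem, which the paper does not prove but only cites from \cite{StTu}, so your approach is the same; the one slip is that $\bigcup_M E_M$ is only sandwiched between $\{v_a>0\}$ and $\{v_a\ge 0\}$ rather than equal to $\{v_a>0\}$, but your $\liminf$ bound only needs $\bigcup_M E_M\supseteq\{v_a>0\}$. You are right that continuity of the $\mu_j$ along monotone sequences must be added, since the statement is false for bare monotone subadditive set functions (on $\R$, with $V$ the affine functions and $\mu(A)=\|\phi\|_{L^\infty(A)}$, $\phi(x)=(1-x)\chi_{(0,1)}(x)$, no nonzero $v\in V$ bisects $\mu$); continuity does hold for the submeasures the paper actually partitions, because only finitely many $P$ meet $Q_R$ and each $A\mapsto\int_{P\cap A}|\prod_i\calE_i f_i|^{\frac{2}{k-1}}$ is a finite measure, absolutely continuous with respect to Lebesgue measure.
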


Using this, we then have the following partitioning lemma. We do not claim that this result is new but we were not able to find the exact statement we needed in the literature.

\begin{theo} \label{PP-om}
Let $\mu$ be a submeasure. Then the conclusion of Theorem \ref{PP} holds true with the adjustment
\[
\mu(O_i) \ges D^{-n} \mu(\R^n). 
\]
\end{theo}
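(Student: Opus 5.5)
We follow Guth's proof of Theorem \ref{PP} in \cite{Gu-I}, replacing additivity of the measure by the submeasure ham sandwich lemma above and only asking for a lower bound on each cell. Recall that a submeasure is monotone and countably subadditive, and $\mu(\emptyset)=0$. The construction proceeds in $S$ bisection steps, with $2^S\approx D^n$, producing polynomials $Q_1,\dots,Q_S$. Suppose the sign cells $O^{(s-1)}_1,\dots,O^{(s-1)}_{2^{s-1}}$ of $Q_1,\dots,Q_{s-1}$ have been built. Define the submeasures $\mu_j(A)=\mu(A\cap O^{(s-1)}_j)$, $1\le j\le 2^{s-1}$; these are again submeasures. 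Let $V$ be the space of polynomials of degree at most $d_s\approx 2^{s/n}$, so that $\dim V\approx d_s^n>2^{s-1}$. The ham sandwich lemma then yields $Q_s\in V$ with
\[
\mu_j(\{Q_s>0\})=\mu_j(\{Q_s<0\})\quad\text{for all } j.
\]
The degree bound is the usual geometric sum: $\sum_s \deg Q_s\les\sum_{s\le S}2^{s/n}\les 2^{S/n}\approx D$.

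For the lower bound, subadditivity gives
\[
\mu(O^{(s-1)}_j)\le \mu_j(\{Q_s>0\})+\mu_j(\{Q_s<0\})+\mu_j(Z(Q_s)).
\]
If $\mu_j(Z(Q_s))=0$, each child cell satisfies $\mu\ge\frac12\mu(O^{(s-1)}_j)$. Iterating over $S$ steps gives $\mu(O_i)\ge 2^{-S}\mu(\R^n)\ges D^{-n}\mu(\R^n)$. We get only a lower bound, since there is no additivity to force equality.

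The main obstacle is the hypothesis of the ham sandwich lemma that $\mu_j(Z(f))=0$ for every nonzero $f\in V$, together with the passage to the perturbed polynomials $Q_s+c_s$. In our application $\mu(A)$ is, for example, $\|F\|^p_{\ell^p_{\mathcal P}L^{2/(k-1)}(A)}$ with $F=\prod_i \calE_i f_i$. Such a $\mu$ is absolutely continuous: it is built from integrals of $|F|^{2/(k-1)}$ over $A\cap P$, so it vanishes on Lebesgue-null sets. Zero sets of nonzero polynomials are Lebesgue-null, so the hypothesis holds. I would therefore state the theorem for submeasures vanishing on Lebesgue-null sets, or reduce to that case by mollification.

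The small perturbation step also needs continuity. We need $\mu(\{\sigma_s(Q_s+c_s)>0,\ \forall s\})\to\mu(\{\sigma_sQ_s>0,\ \forall s\})$ as $c\to0$. The symmetric difference of these sets shrinks to a subset of $\bigcup_s Z(Q_s)$, which has measure zero. For our specific $\mu$, which is given by finitely many integrals of an $L^1_{loc}$ density (or by a countable sum with tails controlled), dominated convergence gives this continuity. So for $c_s$ small enough the cells keep $\mu(O_i)\ge\frac12\cdot2^{-S}\mu(\R^n)$. Lemma \ref{lci} then lets us choose the $c_s$ generically so that the $Z(\tilde Q_s)$ are transverse complete intersections.
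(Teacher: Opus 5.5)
Your proposal is correct and follows the same route as the paper. You iterate Guth's construction with the submeasure ham sandwich lemma, and at each bisection you combine subadditivity with $\mu_j(\{Q_s>0\})=\mu_j(\{Q_s<0\})$, so each child cell keeps at least half of its parent's mass, giving $\mu(O_i)\ge 2^{-S}\mu(\R^n)\gtrsim D^{-n}\mu(\R^n)$. You also check two things the paper's sketch leaves implicit, and both checks are valid for the specific $\mu$ in \eqref{mudef}: that $\mu$ vanishes on zero sets of nonzero polynomials, which is needed for the ham sandwich hypothesis and to discard $Z(Q_s)$, and that cell masses are continuous under $Q_s\mapsto Q_s+c_s$, by dominated convergence.
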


\begin{proof}
Essentially the same iteration argument works as in the additive case using the polynomial ham sandwich theorem. The only difference is that at each step in the iteration we use the subadditivity $\mu(\R^n) \leq \mu(O)+\mu(O^c)$ and the bisection property $\mu(O)=\mu(O^c)$ to deduce that $\mu(O) \geq \frac12 \mu(\R^n)$ and $\mu(O^c) \geq \frac12 \mu(\R^n)$. This is then carried through  every step in the iteration and leads to the desired conclusion.
\end{proof}

The next result is helpful in controlling the tangent plane of a variety. Here we assume that the $n-m$-dimensional variety $Z=Z(P_1,...,P_{m})$ is a transverse complete intersection and that $m \leq n-1$. Given $w \in \Lambda^{n-m} \R^n$, we define $Z_w$ by
\begin{equation} \label{Zw}
Z_w: =\{ x \in Z | \nabla P_1(x) \wedge ... \wedge \nabla P_{m}(x) \wedge w = 0\}. 
\end{equation}
The expression $g_w(x):= \nabla P_1(x) \wedge ... \wedge \nabla P_{m}(x) \wedge w$ is a polynomial with degree at most $\on{deg} P_1 + ...+\on{deg} P_{m}$, therefore $Z_w$ is an algebraic variety. The following result states that, generically, $Z_w$  is a a smooth transverse complete intersection.

\begin{lemma}[Lemma 5.6, \cite{Gu-II}] \label{Lw}
For almost every $w \in \Lambda^{n-m} \R^n$, $Z_w= Z(P_1,...,P_{m},g_w)$ is a smooth transverse complete intersection. 
\end{lemma}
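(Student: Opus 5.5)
The statement to prove is Lemma \ref{Lw}: for almost every $w \in \Lambda^{n-m}\R^n$, the variety $Z_w = Z(P_1,\dots,P_m,g_w)$ is a transverse complete intersection. The plan is a parametric transversality (Sard) argument, in the same spirit as Lemma \ref{lci}.

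\textbf{Step 1: reduce to a statement about the map $g$.} Since $Z=Z(P_1,\dots,P_m)$ is a transverse complete intersection, it is a smooth $(n-m)$-dimensional manifold. The form $\nabla P_1(x)\wedge\dots\wedge\nabla P_m(x)$ is a nonvanishing decomposable $m$-vector on $Z$. Fix an identification $\Lambda^{n}\R^n\cong\R$, so that $g_w(x)$ is a real number depending linearly on $w$.

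Transversality of $Z(P_1,\dots,P_m,g_w)$ at a point $x\in Z_w$ means
\[
\nabla P_1\wedge\dots\wedge\nabla P_m\wedge\nabla g_w\neq 0 .
\]
This is equivalent to the restriction $g_w|_Z$ having nonzero differential at $x$. So it suffices to show that, for almost every $w$, the number $0$ is a regular value of $g_w|_Z$.

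\textbf{Step 2: apply parametric Sard.} Consider the smooth map
\[
G: Z\times \Lambda^{n-m}\R^n \to \R, \qquad G(x,w)=g_w(x)=\omega(x)\wedge w ,
\]
where $\omega(x)=\nabla P_1(x)\wedge\dots\wedge\nabla P_m(x)$. The key point is that $G$ is a submersion. Indeed, for fixed $x$, the map $w\mapsto \omega(x)\wedge w$ is a linear functional on $\Lambda^{n-m}\R^n$, and it is nonzero because $\omega(x)\neq 0$: the wedge pairing $\Lambda^m\times\Lambda^{n-m}\to\Lambda^n$ is nondegenerate. Hence the partial derivative of $G$ in the $w$-direction is already surjective onto $\R$.

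Consequently $0$ is a regular value of $G$, and $M=G^{-1}(0)$ is a smooth submanifold. The parametric transversality theorem (Sard's theorem applied to the projection $M\to\Lambda^{n-m}\R^n$) then gives that for almost every $w$ the value $0$ is regular for $G(\cdot,w)=g_w|_Z$. By Step 1, this is exactly the transverse complete intersection property.

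\textbf{Step 3: conclude.} Smoothness of $Z_w$ follows from the transverse complete intersection property, as recalled after \eqref{ci}. The variety $Z_w$ is algebraic because $g_w$ is a polynomial.

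\textbf{Main obstacle.} The only delicate point is the equivalence in Step 1 between the wedge condition and regularity of $g_w|_Z$. Write
\[
\nabla g_w = t + \sum_i a_i\nabla P_i ,
\]
where $t$ is tangent to $Z$, so that $t=\nabla_Z(g_w|_Z)$. Then
\[
\omega\wedge\nabla g_w=\omega\wedge t ,
\]
and this is nonzero exactly when $t\neq0$, since $t$ lies in the orthogonal complement of $\mathrm{span}\{\nabla P_i\}$. The rest of the argument is standard.
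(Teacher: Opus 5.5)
Your argument is correct and complete. Nondegeneracy of the wedge pairing makes $G(x,w)=\omega(x)\wedge w$ a submersion on $Z\times\Lambda^{n-m}\R^n$, and Sard's theorem for the projection $G^{-1}(0)\to\Lambda^{n-m}\R^n$ then shows that $0$ is a regular value of $g_w|_Z$ for almost every $w$; you also correctly identified this with the condition $\nabla P_1\wedge\cdots\wedge\nabla P_m\wedge\nabla g_w\neq 0$ on $Z_w$.

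The paper does not prove this lemma itself but imports it from Guth. Your parametric-transversality argument is the standard Sard-based route for genericity statements of this kind, the same mechanism that underlies Lemma \ref{lci}.
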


The last result we need from \cite{Gu-II} provides a useful tool for controlling the transverse intersections between a tube and a variety.
Given a direction $v_0 \in \R^n$ with $|v_0|=1$, a radius $r$ and a point $x_0 \in \R^n$, we let $T(x_0,v_0,r)=\{x_0+tv_0+e: t \in \R, e \in \R^n, |e| \leq r \}$
be the tube of radius $r$ that passes through $x_0$ and has direction $v_0$.

Suppose $Z \subset \R^n$ is a transverse complete intersection; we define the following two sets:
\[
Z_{v_0, \leq \alpha}= \{z \in Z: \angle(v_0, T_z Z) \leq \alpha\}, \qquad  Z_{v_0, > \alpha} = Z \setminus Z_{v_0, \leq \alpha}.
\]
We have the following result. 
\begin{lemma}[Lemma 5.7, \cite{Gu-II}] \label{Zan}
Suppose that $Z=Z(P_1,...,P_{m})$ is a transverse complete intersection and that the polynomials $P_i, i=1,...,m$ have degrees at most $D$. Let 
$T=T(x_0,v_0,r)$ be a tube as above. Then for any $\alpha >0$, $Z_{v_0,>\alpha} \cap T$ is contained in a union of $\les D^n$ balls of radius $\les r\alpha^{-1}$.
\end{lemma}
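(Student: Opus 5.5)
The statement to prove is Lemma \ref{Zan}, which is quoted from \cite{Gu-II}. I would reproduce Guth's argument, which reduces the claim to counting isolated points of a transverse complete intersection via Theorem \ref{ftci}.

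\textbf{Step 1: normalize the tube.} After a rigid motion I take $v_0=e_n$ and $x_0=0$, so that $T=\{x:|x'|\le r\}$ with $x'=(x_1,\dots,x_{n-1})$. Covering $T$ by balls of radius $r\alpha^{-1}$, the only relevant balls are those meeting $Z_{v_0,>\alpha}\cap T$. Cut $T$ into segments of length $\approx r\alpha^{-1}$ along $e_n$, so each segment lies in a ball of radius $\les r\alpha^{-1}$. It then suffices to show that $Z_{v_0,>\alpha}$ meets at most $\les D^n$ of these segments.

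\textbf{Step 2: the key geometric claim.} If $z\in Z_{v_0,>\alpha}\cap T$, I expect the component of $Z\cap T$ through $z$ to exit $T$ quickly. At such a point the tangent plane $T_zZ$ makes angle $>\alpha$ with $e_n$. So $Z$ near $z$ can be written as a graph over directions transverse to $e_n$, with slope controlled by $\alpha^{-1}$. Moving within $Z$ along $T_zZ$, the $x'$-coordinates change at rate $\ges\alpha$ times the displacement in $x_n$. Hence within $x_n$-distance $\les r\alpha^{-1}$ the variety must leave the cylinder, unless the angle condition fails along the way. The danger is that curvature of $Z$ could keep it inside $T$ over long stretches.

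\textbf{Step 3: count with transverse intersections.} To handle Step 2 rigorously without curvature bounds, I follow Guth's device. For a generic $(n-m)$-plane through $e_n$-transverse directions, consider the slices $Z\cap\{x'\cdot \omega=c\}$. Choose generic affine functions $\ell_1,\dots,\ell_{n-m}$ built from the $x'$-coordinates together with the polynomial $g_w$ from Lemma \ref{Lw}, so that the system $P_1=\dots=P_m=0$, $\ell_j=c_j$ is a transverse complete intersection. Theorem \ref{ftci} then bounds its solutions by $D^m\le D^n$; Lemma \ref{lci} and Lemma \ref{Lw} give the needed genericity. Concretely, each segment containing a point of $Z_{v_0,>\alpha}\cap T$ should contain a point where the projection $Z\to\R^{n-1}$, $x\mapsto x'$, restricted to a suitable slice, takes a prescribed generic value of size $\le r$. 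This follows from a degree-theory or intermediate-value argument using the angle lower bound. Distinct segments give distinct solutions, so there are $\les D^n$ of them.

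The main obstacle is Step 3. It requires converting the pointwise angle condition into the existence of a genuine intersection point inside each segment, with a controlled radius $\les r\alpha^{-1}$, while keeping enough genericity to invoke Theorem \ref{ftci}. I would first try the graph representation plus a quantitative inverse function argument. This works as long as the angle stays $\ges\alpha/2$, and it needs an auxiliary subdivision by the variety $Z_{v_0,\le\alpha/2}$, whose complexity is again controlled by a Bezout-type count of $\les D^n$.
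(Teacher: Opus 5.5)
The paper gives no proof of this lemma (it is quoted from \cite{Gu-II}), so your sketch has to stand on its own. It does not. Step~1 is a fine reduction, but Step~3, which carries the whole argument, rests on a mechanism that is false. Suppose every segment meeting $Z_{v_0,>\alpha}$ contained a point of one fixed zero-dimensional transverse complete intersection $Z\cap\{\ell_1=c_1,\dots,\ell_{n-m}=c_{n-m}\}$. Then Theorem \ref{ftci} would give $\les D^m$ segments. (If instead the slice may depend on the segment, Theorem \ref{ftci} gives no bound at all.) But $D^m$ is wrong already for hypersurfaces. Take $P(x)=\sum_{i=1}^n\prod_{j=1}^{D}(x_i-j)^2-\epsilon_0$ with $\epsilon_0>0$ small. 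Then $Z(P)$ is a transverse complete intersection of degree $2D$ consisting of $D^n$ tiny ovals around the points of $\{1,\dots,D\}^n$. Apply the invertible linear map $x\mapsto(\epsilon x_1,\dots,\epsilon x_{n-1},L\sum_{i=1}^n D^{i-1}x_i)$ with $\epsilon$ small and $L\gg r\alpha^{-1}$. Now all ovals lie in $T=\{|x'|\le r\}$ at heights separated by $\gg r\alpha^{-1}$. The highest point of each oval lies in $Z_{v_0,>\alpha}$ with $v_0=e_n$, since there $T_zZ=e_n^\perp$. So $\gtrsim D^n$ balls are needed, while a line parallel to $e_n$ meets $Z$ in at most $2D$ points.

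This is exactly where your degree-theory/intermediate-value step breaks. The angle bound holds at $z$ only, and the sheet through $z$ can close up inside $T$ (contrary to the expectation in Step~2) or fold back before reaching your slice. For $m\ge 2$ there is a further problem: $\angle(v_0,T_zZ)>\alpha$ does not make $T_zZ$ transverse to the $m$-plane $\{\ell_j=c_j\}$, which contains $v_0$. For example, a horizontal line parallel to that plane, lying in $T$, never meets it.

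What is missing is a separate, genuinely lower-dimensional treatment of the points whose sheet does not cross the tube. Your closing remark about $Z_{v_0,\le\alpha/2}$ points the right way, but that locus (or its boundary) is positive-dimensional, so Theorem \ref{ftci} does not apply to it. What you need is a covering of it, near $T$, by $\les D^n$ balls of radius $\les r\alpha^{-1}$, which is the lemma itself in lower dimension. This calls for an induction on $\dim Z$, together with an observation that makes the induction close.

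One way to run it is as follows. For generic $\beta\in(\alpha/2,\alpha)$, consider $Z_{=\beta}=Z(P_1,\dots,P_m,Q_\beta)$, where $Q_\beta=|v_0\wedge\nabla P_1\wedge\dots\wedge\nabla P_m|^2-\cos^2\beta\,|\nabla P_1\wedge\dots\wedge\nabla P_m|^2$. This is a transverse complete intersection of one dimension less with degrees $O_n(D)$, by Sard applied to $z\mapsto\cos^2\angle(v_0,T_zZ)$ on $Z$, as in Lemma \ref{lci}. The key observation is that $T_zZ_{=\beta}\subset T_zZ$, so $\angle(v_0,T_zZ_{=\beta})\ge\beta$ at every point of $Z_{=\beta}$. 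Hence the induction hypothesis covers $Z_{=\beta}\cap CT$ by $\les D^n$ balls of radius $\les r\alpha^{-1}$.

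For a hypersurface, take $z\in Z_{v_0,>\alpha}\cap T$ at distance $\ge C'r\alpha^{-1}$ from these balls. Every straight segment from $z'$ to a point of the disc $\{|x'|\le r\}$ then lifts through $Z_{v_0,>\beta}$, where $Z$ is locally a graph over $v_0^\perp$ of slope $<\cot\beta$. Indeed, a lift can only stop on $Z_{=\beta}\cap T$, within distance $\les r\alpha^{-1}$ of $z$, which is excluded. So the sheet through $z$ meets a fixed generic line parallel to $v_0$ in $T$ within distance $\les r\alpha^{-1}$ of $z$. Only for these points does your fixed-slice Bezout count apply.

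In codimension $m\ge 2$ the bad locus must also contain a level set of a suitably normalized $g_w$ from Lemma \ref{Lw}. This level set records quantitative degeneracy of the projection of $T_zZ$ onto the orthogonal complement of the slice. One uses finitely many slice directions so that each point of $Z_{v_0,>\alpha}$ is good for one of them. The base case $\dim Z=0$ is Theorem \ref{ftci}, and the bounded growth of degrees along the induction produces $D^n$, consistent with the example above.
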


\section{Main Argument} \label{sma}

Given $Q \in \mathcal{P}$ and $R > 0$, we recall that by $Q_R$ the dilation of $Q$ by a factor of $R$ from its center. Our goal is to prove the following local estimate.
\begin{prop} \label{indp}
Assume the setup in Theorem \ref{mainTd} with $p=\frac{2(n+k)}{k(n+k-2)}$.
Then for any $\epsilon>0$ the following holds true: for any $R >0$ and $Q \in \mathcal{P}$ there exists $C(\e)$ such that
\begin{equation}  \label{mrei-ind}
\left( \sum_{P \in \mathcal{P}} \| \prod_{i=1}^k \calE_i f_i \|^p_{L^\frac2{k-1}(P \cap Q_R)} \right)^\frac1p \lesssim C(\epsilon) C(\bm{\nu}) R^{ \epsilon} 
 \prod_{i=1}^k \| f_{i} \|_{L^2(U_i)}.  
\end{equation}
\end{prop}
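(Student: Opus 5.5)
We will prove \eqref{mrei-ind} by induction on the scale $R$, following the polynomial partitioning scheme of Guth \cite{Gu-II} as adapted in \cite{Be-6}. The only change is that every length scale is measured in the anisotropic geometry of $\mathcal{P}$. The main tools are the submeasure partitioning Theorem \ref{PP-om} and the wave packet decomposition of Section \ref{swp} on $Q_R$. For the latter, the tubes associated to $\calE_i f_i$ have length $\nu_i^{-1}R$ in the $e_i$ direction and width $R^{\frac12+\delta}$.

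\emph{Base case and setup.} For $R\lesssim 1$, $Q_R$ meets $O(1)$ parallelepipeds of $\mathcal{P}$. Hence $\ell^p$ and $\ell^{2/(k-1)}$ are comparable there, and \eqref{LfT} with $\mu_i\approx1$ gives the bound. For general $R$, set
\[
\mu(A)=\Big(\sum_{P\in\mathcal{P}}\|\prod_i\calE_if_i\|^p_{L^{2/(k-1)}(P\cap Q_R\cap A)}\Big)^{1/p}
\]
raised to a suitable power. Since $p<1$ and the inner exponent is at most $1$, $\mu^{(k-1)/2}$ is at most countably subadditive in $A$, so it is a submeasure. We apply Theorem \ref{PP-om} to it, but after rescaling coordinates $x_i\mapsto \nu_i x_i$. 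This turns $\mathcal{P}$ into unit cubes and $Q_R$ into an $R$-cube, while the rescaled tubes become comparable to standard $R^{1/2+\delta}$-tubes of length $R$, since the wave directions are $\nu_i$-close to $e_i$ by \eqref{cN}. We then run Guth's dichotomy with degree $D=R^{\delta'}$.

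\emph{Cellular and algebraic cases.} In the \emph{cellular case}, most of $\mu$ lies in the cells $O_j$ minus the $R^{1/2+\delta}$-neighborhood $W$ of $Z(P)$. Each tube enters at most $D+1$ cells, so by \eqref{sel} some cell has $\prod_i\|f_{i,O_j}\|_{L^2}\lesssim D^{-k(n-1)/2}\prod_i\|f_i\|_{L^2}$, where $f_{i,O_j}$ is the sum of the wave packets through $O_j$. The lower bound $\mu(O_j)\gtrsim D^{-n}\mu(\R^n)$ from Theorem \ref{PP-om} then closes the induction precisely when $p\ge \frac{2(n+k)}{k(n+k-2)}$, using the usual numerology with $\frac{2}{k-1}$ replaced by the $\ell^p$ counting. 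In the \emph{algebraic case}, the mass lies near $Z(P)$. We iterate on lower-dimensional transverse complete intersections via Lemmas \ref{lci} and \ref{Lw}, separating tangential and transverse tubes at smaller scales $\rho<R$, and using Lemma \ref{Zan} for the transverse ones. The transverse part is handled by the induction hypothesis on balls of size $\rho$, again in rescaled geometry.

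\emph{Main obstacle: the tangential part.} The heart of the proof is the tangential part, where all $k=n-1$ families are tangent to an $m$-dimensional variety $Z$. The normals of the contributing wave packets of $\calE_if_i$ then lie within angle $\approx R^{-1/2+\delta}$ of $T_zZ$. The key step is to convert this normal-direction localization into a support localization of $f_i$ near a submanifold of codimension $n-m$. For this we use Lemma \ref{kL} with $\nu_0=\nu_i$, $\calH$ chosen so that $\calH^\perp$ is normal to $T_zZ$, and $V_\zeta$ the directions not perpendicular to the other normals; \eqref{Sv} is exactly \eqref{curvad}. Because angles here are $\nu_i$-rescaled, normal localization at scale $\nu_i\mu$ gives support localization at scale $\mu$. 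Theorem \ref{MB}, in the $\ell^{2/(k-1)}_{\mathcal{P}}$ form \eqref{LfT}, then yields the gain $(\prod_j\mu_j)^{1/2}$, and Hölder across $\mathcal{P}$ converts this into the $\ell^p$ bound. Verifying the transversality hypothesis \eqref{normal6} for the resulting submanifolds uses part (ii) of Lemma \ref{kL}. The real difficulty is to check that this chain loses no power of $\nu_i$, namely that the $\nu_i^{-1}$ tube lengths exactly cancel the $\nu_i$ in $\tilde S$. Finally, we sum the three contributions, and choosing $\delta\ll\delta'\ll\epsilon$ closes the induction with constant $C(\epsilon)C(\bm\nu)R^\epsilon$.
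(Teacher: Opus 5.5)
The gap is in your tangential case, which cannot work as you set it up, and the mechanism that replaces it is missing.

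By \eqref{trans}, all $k=n-1$ families can be nearly tangent to $Z$ only if $Z$ is a hypersurface. More importantly, Condition~\ref{C} caps the total codimension of localization at $n-k=1$. Equivalently, in Lemma~\ref{kL} the space $V_\zeta^\perp\subset T_\zeta S_i$ is a line, so \eqref{ph} fails for $\calH^\perp=(T_zZ)^\perp$ as soon as $Z$ has codimension $\geq 2$. Hence Theorem~\ref{MB} can use the localization of only one function, in one direction. It yields a single factor $\mu^{1/2}\approx \rho^{-\frac14+\frac{\delta_1}{2}}$, not $(\prod_j \mu_j)^{1/2}$ with every $\mu_j$ small.

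For $Z=Z(P_1,\dots,P_m)$ of codimension $m\geq 2$, what must actually be treated is a mixed configuration. The paper handles it in three steps:
\begin{enumerate}
\item It finds an $(n-m)$-plane $V$ carrying a large share of $\mu$ on regular balls (or else passes to a subvariety). It then splits each family into waves transversal or non-transversal to $V$ at angle $4\gamma_0$.
\item If at least $m$ families are transversal, a second partitioning of the push-forward of $\mu$ onto $V$, with degree $D_{m+1}\gg D_m$, either closes as in the cellular case or descends to $Z(P_1,\dots,P_{m+1})$. This is how the dimension reduction really works; ``iterate via Lemmas~\ref{lci} and~\ref{Lw}'' does not supply it.
\item Otherwise exactly $m-1$ families are transversal. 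If, say, the $m$-th is tangential, the one usable direction is $\calH^\perp=(T_{z_0}Z)^\perp\cap N_1(\bar\zeta_1)^\perp\cap\cdots\cap N_{m-1}(\bar\zeta_{m-1})^\perp$. Condition \eqref{ph} holds for it precisely because families $m+1,\dots,k$ are non-transversal.
\end{enumerate}

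Closing the tangential case with this single gain also needs two steps you omit.

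First, the localization holds only on each $Q_{R^{1/2+\delta_0}}(x_0)$, since it depends on $T_{z_0}Z$ to precision $\rho^{-1/2+\delta_1}$. After applying \eqref{LfT} on these pieces you must sum $\prod_i \|f_{i,Q_{R^{1/2+\delta_0}}(x_0)}\|_{L^2}^{2/(k-1)}$ over $x_0$. This is \eqref{aux30}, and it rests on the endpoint multilinear Kakeya inequality of Carbery--Valdimarsson.

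Second, ``H\"older across $\mathcal{P}$'' over the $\approx R^n$ parallelepipeds in $Q_R$ costs $R^{\frac{n}{2(2n-1)}}$. That exceeds the $R^{1/4}$ you gained by a factor $R^{\frac{1}{4(2n-1)}}$. The paper instead interpolates the $\ell^{2/(n-2)}$ bound with the curvature-free estimate \eqref{aux111} in $\ell^{2/(n-1)}_{\mathcal{P}}L^{2/(n-2)}$. That estimate loses $R^{(n-1)/2}$ and comes from $L^\infty_{x_i}L^2$ orthogonality along slices. With $\theta=\frac{2(n-1)}{2n-1}$, the powers of $R$ cancel exactly at $p=\frac{2(2n-1)}{(n-1)(2n-3)}$. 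Alternatively, you could restrict H\"older to the parallelepipeds meeting $N_{R^{1/2+\delta_0}}(Z)$ and count them with a Wongkew-type volume bound in rescaled coordinates, but that has to be argued.

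This step fixes the endpoint, not the cellular one, which already closes for $p>\frac{2n}{k(n-1)}$. Your anisotropic rescaling $x_i\mapsto \nu_i x_i$ is a plausible substitute for the paper's bookkeeping (tangency angle $\nu_i\rho^{-1/2+\delta_1}$, parallelepipeds $Q^l$). However, it does not address either of these issues.
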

Indeed, by invoking Lemma \ref{lem epsilon removal} we can remove the $\epsilon$-loss above in the regime $p>\frac{2(n+k)}{k(n+k-2)}$ and obtain Theorem \ref{mainTd}. The proof closely follows the one in \cite{Be-6}, which in turn uses an induction on scales type argument that uses polynomial partitioning along the lines developed by Guth in \cite{Gu-II}.

We first fix some notation. Let $A(R)$ be the best constant for which the following inequality
\begin{equation}  \label{mrei-ii}
\left( \sum_{P \in \mathcal{P}} \| \prod_{i=1}^k \calE_i f_i \|_{L^1(P \cap Q_R)}^p \right)^\frac1p \leq A(R) C(\bm{\nu}) \prod_{i=1}^k \| f_{i} \|_{L^2(U_i)},
\end{equation}
holds true for any $Q_R$, with $R \ges 1$ and $Q \in \mathcal{P}$. In the above, we tacitly ignore the margin concept, which is necessary since the induction on scales type argument involves wave packets whose use changes slightly the Fourier support of the functions involved at each step. We will run the argument without paying attention to the margin concept; as as explained in Section 5.7 in \cite{Be-6}, the margin can be inserted back into the arguments without issue.  

For a Lebesgue measurable set $A \subset \R^n$ we define the submeasure 
\begin{equation} \label{mudef}
\mu(A)= \sum_{P \subset \mathcal{P}}  \|\prod_{i=1}^k \calE_i f_i (x) \|_{L^\frac2{k-1}(P \cap A)}^p dx.
\end{equation}
It is easy to see that $\mu$ is an submeasure for $p \leq 1$; when $p<1$, $\mu$ fails the additivity property required for measures, but instead it has the sub-additivity property $\mu(A \cap B) \leq \mu (A) + \mu(B)$ if $A \cap B= \emptyset$ which makes it a submeasure. 

When we work within a specific parallelepiped $Q_R$, for each wave $\calE_i f_i$ we employ the wave packet decomposition adapted to $Q_R$; to keep notation simple, this will be implicit in the exposition and we simple use the notation $\calT_i$ for the set of wave packets used. 
If $C \subset Q_R$ is a Lebesgue measurable set, we define
\[
f_{i,C}= \sum_{T \in \calT_i: T \cap C \neq \emptyset} f_{i,T}.
\]
It is an easy exercise to prove the following
\begin{equation} \label{Cloc}
\left( \sum_{P \subset \mathcal{P}} \| \prod_{i=1}^k \calE_i f_i \|_{L^\frac2{k-1}(P \cap C)}^p \right)^\frac1p \leq  C(\bm{\nu}) \left( A(R) \prod_{i=1}^k \| f_{i,C} \|_{L^2(U_i)} + O(R^{-N}) \prod_{i=1}^k \|f_i\|_{L^2(U_i)} \right). 
\end{equation}

For clarity, we first summarize all the key steps of the argument and highlight the adjustments needed for our more general set-up compared to \cite{Be-6}.

We start by applying the polynomial partitioning lemma which allows us to separate into cases based on where the sub-measure concentrates its mass. If the sub-measure concentrates in the cells, away from $Z(P)$, then the argument closes fairly easily using the induction hypothesis. If the sub-measure concentrates near $Z(P)$ and the main contributions come from the non-tangential tubes then using the induction hypothesis we can again obtain good estimates. 

The most interesting case is when the main contributions involve the tangential tubes of at least one family. The idea here is to leverage the extra geometric structure of the tangential tubes to extract localization properties for the new function generated by the family of tangential wave packets. If we manage to do so then we can apply Theorem \ref{MB} to obtain an improved $L^{\frac{2}{k-1}}$ estimate. We can then interpolate this with a simpler $L^{\frac{2}{k}}$ estimate to obtain the desired $L^p$ estimate.

In order to extract localization for the function generated by the tangential wave packets we need at least one of the other families of tubes to ``cooperate". Practically this means that at least one family should be non-transversal to $Z(P)$ in the region of interaction. In order to define this notion of transversality we need the tangent plane $T_z Z$ to be stable over the relevant regions. This is doable except for possibly some portions of $Z$ which can be shown to belong to the neighborhood of a subvariety $Z(P,Q)\subset Z(P)$ instead. For the purpose of this exposition, let us assume that reducing the dimension of the variety makes things easier and therefore we can assume that we only work with the regular portions of $Z(P)$. In particular, we indeed have a well-defined notion of transversal tubes to the variety.

If the main contributions from one family come from transversal tubes then we can re-apply the polynomial partitioning lemma inside the ``wall" by essentially looking at the projection along this transversal direction. The cellular case here closes by the same argument as before while the algebraic case corresponds to further reducing the analysis to a subvariety $Z(P,Q)\subset Z(P)$ which, again, morally makes things easier. Thus, we can assume that all contributions come from non-transversal tubes which ultimately allows us to extract the desired localization properties for the tangential tubes and close the entire argument.

To rigorously implement the dimension reduction arguments we need to run an extra induction on the dimension of the variety which we are studying. This leads to an additional technical layer to the argument especially because it is easier to work directly with the general set-up where the sub-measure is concentrated in the neighborhood of some algebraic variety $Z=Z(P_1,...P_m)$, which then necessitates tracking all the corresponding parameters for each polynomial throughout the argument. Note that the base case of the induction here corresponds to the case $m=n$, which is fairly straightforward as $Z$ is simply a finite set of points in this case.

We now proceed with the actual proof of Proposition \ref{indp}. For some of the technical details which are identical to the ones in \cite{Be-6}, we will simply provide the appropriate reference.

\subsection{Polynomial partitioning - part I} 

We apply Theorem \ref{PP-om} to $\mu$ restricted to $Q_R$, that is to $\mu^{Q_R}(A)=\mu(A \cap Q_R)$; this produces a polynomial $P$ of degree at most $D_1$ and with all the properties listed there. Here the important properties are
$\R^n \setminus Z(P)=\bigcup_j O_j$, the cardinality of the set of $j$'s is $\approx D_1^n$ and $\mu(O_j) \ges D^{-n}_1 \mu (Q_R)$. 

We begin our analysis with the non-algebraic case where the measure is concentrated away from the algebraic variety; thus we assume that the following holds true:
\begin{equation} \label{nalg1}
\mu(N_{R^{\frac12+ \delta_0}}(Z) \cap Q_R) \ll D_1^{-n} \mu(Q_R).
\end{equation}
Here $\delta_0$
is a small parameter with the property $ \delta \ll \delta_0 \ll 1$. 
If we let $O_j'=O_j \setminus N_{R^{\frac12+ \delta_0}}(Z) $, then the following holds true
\[
\mu(O_j') \ges D_1^{-n} \mu(Q_R),
\]
for all $j$'s. For each $f_i$ and each cell $O_j'$, we collect the relevant tubes from $f_i$ that go through $O_j'$ by defining
\[
f_{i,j} = \sum_{T \in \calT_i: T \cap O_j' \neq \emptyset} f_{T}. 
\]

The key gain in the cellular case is that for every $i$ and for any tube $T \in \calT_i$ of radius $R^{\frac12+\delta}$, $T$ enters at most $D_1+1$ many cells $O_j'$. From \cite[Lemma 3.2]{Gu-I} we can then estimate for each $i$
\begin{equation} \label{encount}
\sum_j \| f_{i,j} \|^2_{L^2} \lesssim D_1 \| f_{i} \|^2_{L^2}.
\end{equation}
Since there are $\approx D_1^{n}$ cells $O_j'$ it follows that for most cells we have $ \| f_{i,j} \|_{L^2}^2 \lesssim D_1^{1-n} \| f_{i} \|^2_{L^2}$.  Assuming $D_1$ is large enough, it follows that for most cells the above inequality holds true for every $i$ as well. We note here that $D_1$ and later choices of degrees will be absolute constants independent of $R$. By choosing such a cell $O_j'$ the following holds true
\[
\begin{split}
 \mu(Q_R) \les D_1^{n} \mu(O_j') & \lesssim D_1^{n}   C(\bm{\nu})^p \left( A(R)  \prod_{i=1}^k \| f_{i,j} \|_{L^2} + O(R^{-N}) \prod_{i=1}^k \| f_{i} \|_{L^2(U_i)}\right)^p \\
 & \lesssim C(\bm{\nu})^p \left( A(R)^p D_1^{n} D_1^{\frac{(1-n)kp}2} + O(R^{-Np}) \right)  \prod_{i=1}^k \| f_{i} \|_{L^2}^p. 
 \end{split}
\]
If the power of $D_1$ is negative then we can choose $D_1$ large enough so that $C D_1^{n+\frac{(1-n)kp}2} < \frac12$ (where $C$ is the constant used in the definition of $\les$) and obtain a satisfactory estimate for this case:
\begin{equation} \label{gest1}
  \mu(Q_R)^\frac1p \leq C(\bm{\nu}) (\frac12 A(R) + O(R^{-N})) \prod_{i=1}^k \| f_{i} \|_{L^2}. 
\end{equation}
For this to happen we need $p> \frac{2n}{k(n-1)}$, which is true. 
 
Next, we assume that we are in the opposite scenario to \eqref{nalg1}, that is
\begin{equation} \label{alg1}
\mu(N_{R^{\frac12+ \delta_0}}(Z) \cap Q_R) \gtrsim D_1^{-n} \mu(Q_R),
\end{equation} 
which is referred to as the algebraic case. From the structural properties of Theorem \ref{PP}, it follows that there exists a polynomial $P_1$, occurring in the factorization of $P$, such that $\deg P_1 \leq D_1$, $Z(P_1)$ is a transverse complete intersection and
\begin{equation} \label{alg2}
\mu(N_{R^{\frac12+ \delta_0}}(Z(P_1)) \cap Q_R) \gtrsim \frac{\mu(Q_R)}{D_1^n \ln{D_1}}.
\end{equation}

\subsection{Polynomial partitioning - II} \label{pp2} The analysis of \eqref{alg2} involves in some cases a further reduction to the case when the sub-measure is concentrated on a subvariety of $Z(P_1)$. This is why
 we consider the following more general setup.

We work with a more general algebraic variety $Z=Z(P_1,...,P_{m})$ which is a transverse complete intersection. The polynomials $P_i$'s have degrees less than $D_i$, with $D_1 \leq D_2 \leq .. \leq D_m$. The only natural restriction we impose on $m$ is that $1 \leq m \leq n$. The base case $m=n$ is a bit special and easier and will be treated at the end of the argument in Section \ref{special}. Thus we will assume that $1 \leq m \leq n-1$.

We assume that the following holds true.
\begin{equation} \label{alg4}
\mu(N_{R^{\frac12+ \delta_0}}(Z)  \cap Q_R) \gtrsim \frac{\mu(Q_R)}{D_m^{\kappa_m}},
\end{equation} 
for some $\kappa_m \in \N$.

In order the continue with our arguments we introduce the concept of transversal tubes/waves. Essentially a tube is transversal to $Z$ if it makes a large angle with the tangent plane at $Z$; this is rather imprecise and its exact formalization is made below. 

We seek to locate a relevant portion of $N_{R^{\frac12+\delta_0}} Z$ where the tangent space at $Z$ is close to a specific choice.  We pick $0 < \gamma_0 \ll 1$, whose smallness depends on parameters such as $\nu$, but not $R$ or $\delta$'s. We say that  $B_{R^{\frac12+\delta_0}}(x_0) \subset N_{R^{\frac12+\delta_0}} Z$ is a regular ball if, on each connected component of $Z \cap B_{R^{\frac12+\delta_0}}(x_0)$, the tangent space  $TZ$ is constant up to an angle $\gamma_0$. 

In Section 5.2 of \cite{Be-6} we have established the following dichotomy:

\begin{itemize}
    \item The sub-measure is concentrated on a subvariety $Z(P_1,...,P_m,P_{m+1})$ of $Z$, which is a complete transverse intersection; this case is treated via the induction process with respect to $m$. 

    \item There exists an $(n-m)$-plane $V$ with the property that if we let $\mathcal{B}_V$ be the set of regular balls $B$ satisfying $\angle (V_B,V) \leq \gamma_0$ then 
    \begin{equation} \label{muBR}
    \mu(\bigcup_{B \in \mathcal{B}_V} B ) \gtrsim \frac{\mu(Q_R)}{D_m^{\kappa_m}};
    \end{equation}
    In the above we used the following definition: for a regular ball $B=B_{R^{\frac12+\delta_0}}(x_0)$ we pick a point $z \in B \cap Z$ and define $V_B$ to be the $n- m$ tangent plane $T_z Z$. 
\end{itemize}
We remark that \eqref{muBR} was made in \cite{Be-6} for $B_R$ instead of $Q_R$ (on the right-hand side), but the argument for $Q_R$ is identical.  We continue the analysis in the second case where, with $N_{m}=(\bigcup_{B \in \mathcal{B}_{V_m}} B) $ and $\mu_{m}$ being the restriction of $\mu$ to $N_{m}$, we have $\mu_{m}(N_{m}) \gtrsim \frac{\mu(Q_R)}{D_m^{\kappa_m}}$. The advantage we gain is that within $N_m$ we can introduce the concept of transversal waves. Let $V_{\geq \gamma}=\{ v \in \R^n: \angle (v,V) > \gamma\}$ and 
$V_{\leq  \gamma}=\{ v \in \R^n: \angle (v,V) \leq \gamma\}$. 
 We split each $f_i$ as follows
\[
f_i = f_{i,ntrans} + f_{i,trans},
\]
where
\[
f_{i,trans}=\sum_{T \in \calT_i: v(T) \in V_{> 4 \gamma_0}} f_{i,T}, \qquad f_{i,ntrans}= \sum_{T \in \calT_i: v(T) \in V_{\leq  4 \gamma_0}} f_{i,T}. 
\]
Then we have 
\[
\mu_m(N_m)^p \lesssim  \sum_{car} \sum_{P \subset \mathcal{P}} \| \prod_{i=1}^k \calE_i f_{i,car_i} \|_{L^\frac{2}{k-1}( P \cap N_{m})}^p 
\]
where $\sum_{car}$ runs over all possible combinations $car=(car(1),...,car(k)) \in \{ntrans,trans\}^k$. 

One of these combinations on the right-hand side is dominant, that is there is a choice $car \in \{ntrans,trans\}^k$ such that
\begin{equation} \label{car1}
 \mu_m(N_m)^p \lesssim  \sum_{P \subset \mathcal{P}} \| \prod_{i=1}^k \calE_i f_{i,car_i} \|_{L^\frac{2}{k-1}( P \cap N_{m})}^p, 
\end{equation}
and it suffices to estimate this combination. 

The next dichotomy is based on whether we have $car_i=trans$ for at least $m$ waves or not. 

If $car_i=trans$ for at least $m$ waves, then we can apply the polynomial partitioning and either be able to use induction on scales (like in the cellular case) to obtain
\begin{equation} \label{gest2}
 \mu(Q_R)^\frac1p \leq C(\bm{\nu}) ( \frac12 A(R) + O(R^{-N})) \prod_{j=1}^k \| f_{j} \|_{L^2},
\end{equation}
or further reduce the setup to where the sub-measure is concentrated in the neighborhood $N_{R^{\frac12+\delta}} Z$ where $Z=Z(P_1,...,P_{m}, P_{m+1})$ is a complete transverse intersection. 

The setup of the polynomial partitioning, the derivation of \eqref{gest2} in the cellular case or the reduction to the case when the sub-measure is concentrated in the neighborhood of a subvariety, can be found in Section 5.2 of \cite{Be-6}; no modification is necessary. 

To give some intuition of why things work well if at least $m$ waves are transversal, the idea is that we start with the orthogonal projection $\pi: \R^n \rightarrow V$ and apply the polynomial partitioning for the push-forward sub-measure $\pi_* \mu_m$. This is done using a degree $D_{m+1}$ much larger $D_1,...,D_m$ 
(those appearing in $P_1,...,P_m$) which, within the body of the argument, improves the equivalent of \eqref{encount} for the transversal waves as follows
\[
\sum_j \| f_{i,j} \|^2_{L^2} \lesssim \| f_{i} \|^2_{L^2},
\]
instead of having a factor of $D_{m+1}$; here $\les$ is allowed to depend to $D_1,...,D_m$ but not $D_{m+1}$. 
This is key in making the cellular case work precisely at the threshold when there are at least $m$ transversal waves.

\subsection{The algebraic case - second take} \label{NCC}  Here we consider the case when on the right-hand side of $\eqref{car1}$ there are at most $m-1$ waves which are transversal.  In fact, by choosing $\gamma_0$ small enough with respect to the parameter $\nu$ in \eqref{trans}, it follows that there are exactly $m-1$ waves that are transversal and $n-m$ that are non-transversal.

We recall here that we work with $m \leq n-1$, thus we have at least one wave whose main contribution comes from non-transversal wavepackets; otherwise the arguments below would be vacuous. To keep the notation simpler, we drop the subscripts notation $trans, ntrans$ from the functions involved and simply keep in mind that the first $m-1$ waves are transversal and the last $n-m$ are not. 

We decompose $Q_R$ into parallelepipeds $Q^l$ of size $\rho$, that is they are translates of $Q_\rho$ with 
\[
\rho^{\frac12+\delta_{1}}=R^{\frac12+\delta_0},
\]
where $\delta_1$ is a parameter chosen such that $\delta_0 \ll \delta_1\ll 1$. 
For each $l$ and $i \in \{1,...,k\}$ we select the tubes from $\calT_i$ that intersect $Q^l \cap N_{R^{\frac12+\delta_0}}(Z) \cap N_m$, that is
\[
\calT_{i,l}= \{ T \in \calT_i \mid T \cap N_{R^{\frac12+\delta_0}}(Z) \cap N_m \cap Q^l \ne \emptyset \},
\]
and let $f_{i,l}= f_{i, \calT_{i,l}}$, which we recall is defined as
\[
f_{i,\calT_{i,l}}= \sum_{T \in \calT_{i,l}} f_{i,T}.
\]
The following holds true:
\[
\| \prod_{i=1}^k \calE_i f_i \|_{L^p(Q_R \cap N_m)}^p  \les \sum_l \| \prod_{i=1}^k \calE_i f_{i,l} \|_{L^p(Q^l \cap N_m)}^p + O(R^{-N}) \prod_{i=1}^k \| f_i \|_{L^2}^p, 
\]
since the tubes that do not intersect $N_m$ have their contribution estimated by $O(R^{-N})$.  

The tubes in each family $\mathcal{T}_{i,l}$ are then further classified as tangential or non-tangential - note that these definitions are specific to each $\mathcal{T}_{i,l}$. These definitions are essentially the same as those in \cite{Gu-I, Gu-II}. The only difference is the usage of the curvature parameters $\nu_i$ and parallelepipeds instead of balls, which balance out to have no effect on the geometric implications of these definitions.

A tube $T \in \mathcal{T}_{i,l}$ is called tangential to $Z$ in $Q^l$ if for any $x \in T$ and $y \in Z \cap 2Q^l$ with 
$|x-y| \leq 2 \rho^{\frac12+\delta_1}$, we have $\angle(T,T_y Z) \leq \nu_i \rho^{-\frac12+\delta_1}$. This implies that
\begin{align*}
    T \cap Q^l \subset N_{2 \rho^{\frac12+\delta_1}} Z \cap Q^l.
\end{align*}

A tube $T \in \mathcal{T}_i$ is called non-tangential to $Z$ in $Q^l$ if it is not tangential, that is there exists $x \in T$ and $y \in Z \cap 2B_l$ with $|x-y| \leq 2 \rho^{\frac12+\delta_1}$ such that $\angle(T,T_y Z) > \nu_i \rho^{-\frac12+\delta_1}$. 
This implies that
\[
N_{2 \rho^{\frac12+\delta_1}} (T) \cap 2Q^l \cap Z_{v(T), > \nu_i \rho^{-\frac12+\delta_1}} \neq \emptyset. 
\]

The set of tangential tubes to $Z$ in $Q^l$ is denoted by $\calT_{i,l,ta}$ and the complementary set in $\calT_{i,l}$ is denoted by $\calT_{i,l,nta}$. 
Using these families of wavepackets, we then define
\[
f_{i,l,ta}= f_{i,\calT_{i,l,ta}}, \qquad f_{i,l,nta}= f_{i,\calT_{i,l,nta}}.
\]
If $R$ is large enough, a tube that is tangent to $Z$ in $Q^l$ cannot be transversal and vice-versa. Thus the first $m-1$ waves contain only tubes that are non-tangential (or with negligible contribution in $Q^l$). The other waves contain only tubes that are non-transversal, and they can be either tangential or non-tangential to $Z$ in $Q^l$. A priori the largest angle that a non-tangential tube can make with $Z$ can take any value, but in the current context things are a bit more rigid. Precisely, within $N_m$, the angle of such a tube with $Z$ is bounded from above by $2 \gamma_0$.

Then we continue with
\[
\sum_l \sum_{P \in \mathcal{P}} \| \prod_{i=1}^k \calE_i f_{i,l} \|_{L^p(P \cap Q^l \cap N_m)}^p \lesssim \sum_l  \sum_{car} \sum_{P \in \mathcal{P}} \| \prod_{i=1}^k \calE_i f_{i,l,car(i)} \|_{L^p(P \cap Q^l \cap N_m)}^p. 
\]
where $\sum_{car}$ runs over all possible combinations $car=(car(1),...,car(k)) \in \{nta \}^{m-1} \times \{ta,nta\}^{n-m}$. One of these combinations on the right-hand side is dominant, that is there is a choice $car \in \{nta \}^{m-1} \times \{ta,nta\}^{n-m}$ such that
\[
\sum_l \sum_{P \in \mathcal{P}} \| \prod_{i=1}^k \calE_i f_{i,l} \|_{L^p(Q^l \cap N_m)}^p \les  \sum_l \| \prod_{i=1}^l \calE_i f_{i,l,car(i)} \|_{L^p(Q^l \cap N_m)}^p, 
\]
and it suffices to estimate this combination. We encounter two scenarios: $car(i)=nta$ for all $i=1,...,k$ or $car(i)=ta$ for at least one $i \in \{1,...,k\}$. 
The following two subsection will address these two scenarios. 

\subsection{Non-tangential tubes dominate} \label{nttd} All tubes are non-tangential, that is $car \in \{nta\}^k$. From Lemma \ref{Zan} it follows that a tube $T \in \calT_i$ can be non-tangential to $Z$ in at most $D_m^n$ balls of radius $\nu_i^{-1} \rho^{\frac12}$; practically this translates into the fact that $T \in \calT_i$ can be non-tangential to $Z$ in at most $D_m^n$
parallelepipeds $Q^l$. Therefore if $car(i) \in \{nta\}$, the following holds true:
\begin{equation} \label{trineq}
\sum_l \| f_{i,l,car(i)} \|_{L^2}^2 \les D^n_m \| f_i \|_{L^2}.
\end{equation}
The induction hypothesis on each ball $Q^l$ gives us:
\[
\begin{split}
& \left( \sum_{P \in \mathcal{P}} \| \prod_{i=1}^k \calE_i f_{i,l, car(i) } \|^p_{L^\frac2{k-1}(P \cap Q^l \cap N_m) }\right)^\frac1p \\
\leq & C(\bm{\nu}) A(\rho) \prod_{i=1}^k \| f_{i,l,car(i)} \|_{L^2(U_i)} + C(\bm{\nu}) O(R^{-N}) \prod_{i=1}^k \| f_{i} \|_{L^2(U_i)}.  
\end{split}
\]
We sum with respect to $l$ to obtain:
\[
\begin{split}
& \left( \sum_l \sum_{P \in \mathcal{P}} \| \prod_{i=1}^k \calE_i f_{i,l, car(i)} \|^p_{L^\frac2{k-1}(P \cap Q^l \cap N_m)} \right)^\frac1p \\
\leq &  C(\bm{\nu}) A(\rho) \left(  \sum_l \prod_{i=1}^k \| f_{i,l,car(i)} \|_{L^2(U_i)}^p \right)^\frac1p + C(\bm{\nu})O(R^{-N}) \prod_{i=1}^k \| f_{i} \|_{L^2(U_i)}  \\
\lesssim &  C(\bm{\nu}) A(\rho) D_m^{\frac{nk}2}  \prod_{i=1}^k \| f_{i} \|_{L^2(U_i)} + C(\bm{\nu}) O(R^{-N}) \prod_{i=1}^k \| f_{i} \|_{L^2(U_i)},
\end{split}
\]
where, in passing to the last line, we have used \eqref{trineq} to obtain
\[
\begin{split}
\left( \sum_l \prod_{i=1}^k \| f_{i,l,car(i)} \|_{L^2(U_i)}^{\frac2k} \right)^{\frac{k}2} \lesssim \prod_{i=1}^k  \left( \sum_l \| f_{i,l,car(i)} \|_{L^2(U_i)}^2 \right)^{\frac12} \lesssim D_m^{\frac{nk}2} \prod_{i=1}^k \| f_{i} \|_{L^2(U_i)},
\end{split}
\]
which then is interpolated ($\frac2k < p < \infty$) with the trivial inequality
\[
\max_l \prod_{i=1}^k \| f_{i,l,car(i)} \|_{L^2(U_i)} \les \prod_{i=1}^k \| f_{i} \|_{L^2(U_i)}. 
\]
Therefore in this case we conclude with the following inequality:
\begin{equation} \label{gest3}
\mu(Q_R)^\frac1p \lesssim C(\bm{\nu}) \left( D_m^{\kappa_m + \frac{nk}2} A(\rho) + O(R^{-N}) \right) \prod_{i=1}^k \| f_{i} \|_{L^2(U_i)}. 
\end{equation}

\subsection{At least one family of tubes is tangent} Here we estimate the following term
\[
\sum_l \sum_{P \in \mathcal{P}} \| \prod_{i=1}^k \calE_i f_{i,l,car(i)} \|^p_{L^\frac2{k-1}(P \cap Q^l \cap N_m)}
\]
where $car(m)= tang$; this choice does not restrict the generality of the argument. We recall that the first $m-1$ waves are transversal, hence non-tangential, the $m$-th wave is tangential, while the other waves are non-transversal but each can be either tangential or non-tangential.

Consider a parallelepiped $Q_{R^{\frac12+\delta_0}}(x_0) \subset N_m$; here $x_0$ is meant to be the center of the parallelepiped. With $V$ being the $n-m$ plane that has been used in the definition of $N_m$, we recall the following geometric information that we have so far for the current setup:

\begin{enumerate}[label=(\roman*)]
    \item For any $i \in \{1,...,m-1\}$ and for any $T \in \calT_i$, $\angle(v(T),V) > 4 \gamma_0$;

    \item For any $i \in \{m+1,...,n-1\}$ and for any $T \in \calT_i$, $\angle(v(T),V) \leq 4 \gamma_0$;

    \item For any $T \in \calT_{m}$ with $T \cap Q_{R^{\frac12+\delta_0}}(x_0) \neq \emptyset$, $\angle(v(T), T_{z_0} Z) \leq \rho^{-\frac12+\delta_1}$, for some $z_0 \in Q_{R^{\frac12+\delta_0}}(x_0) \cap Z$;

    \item $\angle(V,T_{z_0} Z) \leq \gamma_0$.
\end{enumerate}

We let
\[
f_{m,l,ta,Q_{R^{\frac12+\delta_0}(x_0)}} = \sum_{T \in \calT_{m,l,ta}: T \cap Q_{R^{\frac12+\delta_0}(x_0)} \neq \emptyset} f_{m,T}. 
\]
The above geometric information implies that $\mathcal{F} (\calE_m f_{m,l,ta,Q_{R^{\frac12+\delta_0}(x_0)}})$ is supported in the following subset of $S_m$
\[
\tilde S_m = \{ \zeta \in S_m: | \pi_{(T_{z_0} Z)^\perp} N_m(\zeta)| \leq \nu_m \rho^{-\frac12+\delta_1}  \}.
\]
We intend to invoke Lemma \ref{kL}, but some more work is needed before we can do so. To put things in perspective, $(T_{z_0} Z)^\perp$
has dimension $m$, thus hinting at localization properties in $m$ directions; but some of the directions in $(T_{z_0} Z)^\perp$ can be very close
to a linear combination of $N_1(\zeta_1),...,N_{m-1}(\zeta_{m-1})$  making them useless for the purpose of extracting good localization information. However it is clear that there is at least one good direction based on dimensionality considerations. 

We make a fixed choice of $\bar \zeta_i \in S_i, i \in \{1,...,k\} \setminus \{m\}$, but let $\zeta_m$ vary inside $S_m$. \eqref{curva} gives us:
\begin{equation}
| N_1(\bar \zeta_1) \wedge ... \wedge N_{k}(\bar \zeta_{k}) \wedge S_{N_m(\zeta_m)} v | \geq c \nu_m | v|,
\end{equation}
for every $v \in \on{span}(N_1(\bar \zeta_1), ..,  N_{k}(\bar \zeta_{k}))^\perp$. 
Therefore if we let $V_{\zeta_m}= \on{span}(\pi_{T_{\zeta_m} S_m} N_1(\bar \zeta_1), .., \pi_{T_{\zeta_m} S_m} N_{k}(\bar \zeta_k))$, we see that \eqref{Sv} is satisfied. Thus $V_{\zeta_m}^\perp=\on{span}(\pi_{T_{\zeta_m} S_m} N_1(\bar \zeta_1), ..., \pi_{T_{\zeta_{m}} S_{m}} N_{k}(\bar \zeta_{k}))^\perp$, 
where the orthogonal space was implicitly taken inside $T_{\zeta_m} S_m$. 

Next we show that \eqref{ph} is satisfied and for this we define $\calH^\perp=(T_{z_0} Z)^\perp \cap N_1(\bar \zeta_1)^\perp \cap .. \cap N_{m-1}(\bar \zeta_{m-1})^\perp$. $\calH^\perp$ is easily seen to be a one-dimensional subspace: indeed it is orthogonal to $N_1(\bar \zeta_1),...,N_{m-1}(\bar \zeta_{m-1})$ and ``almost" orthogonal to $N_m(\zeta_m),...,N_{k}(\bar \zeta_{k})$ since it is orthogonal to  $T_{z_0} Z$.   We are going to weaken our localization property in that 
$\mathcal{F} (\calE_m f_{m,l,ta,Q_{R^{\frac12+\delta_0}}(x_0)})$ is supported in the following subset of $S_m$
\[
\tilde S_m=  \{ \zeta \in S_m: | \pi_{\calH^\perp} N_m(\zeta)| \leq \nu_m \rho^{-\frac12+\delta_1}  \}.
\]
Let $e \in \calH^\perp$ be the unit vector. Since $\calH^\perp$ is one-dimensional, it suffices to establish \eqref{ph} for $e$.  First we have that $|\angle(e,N_m(\zeta_m)) - \frac{\pi}2| \lesssim \rho^{-\frac12+\delta_1} \ll 1$. Second, for each $i \in \{1,...,m-1\}$ we know that $e \perp N_i(\bar \zeta_i)$; from this it follows that  $|\angle (e,N_i(\zeta_i))-\frac{\pi}2| \leq c \ll 1$ for any choice $\zeta_i \in S_i,  i=1,..m-1$ (this is a consequence of the small diameter of each $S_i$, see \eqref{cN}). Third, for each $i \in \{m+1,...,k\}$, it follows that $|\angle (e,N_i(\bar \zeta_i))-\frac{\pi}2| \les \gamma_0$. Thus, by choosing $c$ and $\gamma_0$ small enough, \eqref{ph} is guaranteed to hold true.

At this point we can invoke Lemma \ref{kL} to conclude that there exists $\tilde S_m'$ a submanifold of $S_m$ of co-dimension one such that 
\[
\tilde S_m \subset B_{c_1\rho^{-\frac12+\delta_1}}(\tilde S_m')
\]
for some $c_1 \les 1$. In addition, $|N_{\zeta_m} S_m' \wedge N_1(\zeta_1) \wedge ... \wedge N_{k}(\zeta_{k})| \ges 1$ (where we skip $N_m(\zeta_m)$)
for any choice $\zeta_i \in S_i, i \in \{1,...,k\} \setminus \{ m\}$ and $\zeta_m \in \tilde S_m'$; this is first obtained for $\bar \zeta_1, .., \bar \zeta_k$ and the extended
to all choices $\zeta_i$ by using the smallness of the diameter of each $S_i$. 

We are ready to invoke Theorem \ref{MB}, but first we fix some notation.
We let
\[
f_{i,Q_{R^{\frac12+\delta_0}}(x_0)}= \sum_{T \in \calT_i: T \cap Q_{R^{\frac12+\delta_0}}(x_0) \neq \emptyset} f_{i,T},
\]
and note that we dropped the indices $l$ and $car(i)$. Given the localization just derived, we invoke \eqref{LfT} to obtain the estimate
\[
\begin{split}
\| \prod_{i=1}^{k} \calE_i f_{i,l,car(i)} \|_{L^\frac{2}{k-1}(Q_{R^{\frac12+\delta_0}}(x_0))} & \lesssim_{\epsilon}  C(\bm{\nu}) (R^{\frac12+\delta_0})^\epsilon
(\rho^{-\frac12+\delta_1})^\frac12 \prod_{i=1}^k \| f_{i,Q_{R^{\frac12+\delta_0}}(x_0))} \|_{L^2(U_i)} \\
& + C(\bm{\nu}) O(R^{-N}) \prod_{i=1}^k \| f_i\|_{L^2(U_i)}.
\end{split}
\]
To conclude the main ingredient in this argument, we use the estimate
\[
\| \prod_{i=1}^{k} \calE_i f_{i,l,car(i)} \|_{L^\frac{2}{k-1}(N_m)} \lesssim  C(\bm{\nu}) (R^{\frac12+\delta_0})^\epsilon (\rho^{-\frac12+\delta_1})^\frac12 
\left( \sum_{x_0} \prod_{i=1}^k \| f_{i,Q_{R^{\frac12+\delta_0}}(x_0))} \|_{L^2(U_i)}^{\frac2{k-1}} \right)^{\frac{k-1}2},
\]
where $x_0$ is chosen such that the set $\{ Q_{R^{\frac12+\delta_0}}(x_0)) \}_{x_0}$ has the finite intersection property and it covers $N_m$. 
 We claim the following estimate 
\begin{equation} \label{aux30}
\left( \sum_{x_0} \prod_{i=1}^k \| f_{i,Q_{R^{\frac12+\delta_0}}(x_0)} \|_{L^2(U_i)}^{\frac2{k-1}} \right)^{\frac{k-1}2} \lesssim \prod_{i=1}^k \| f_{i} \|_{L^2(U_i)}.
\end{equation}
We postpone the proof to Subsection \ref{subsection 5.8}. Taking it for granted for now, we conclude with
\[
\| \prod_{i=1}^{n-1} \calE_i f_{i,l,car(i)} \|_{L^\frac{2}{n-2}(N_m)} \lesssim C(\bm{\nu}) \left( (R^{\frac12+\delta_0})^\epsilon (\rho^{-\frac12+\delta_1})^\frac12 + O(R^{-N}) \right) \prod_{i=1}^{n-1} \| f_{i} \|_{L^2(U_i)},
\]
where we recall that $k=n-1$. This is rewritten as
\[
\left( \sum_{P \in \mathcal{P}} \| \prod_{i=1}^{n-1} \calE_i f_{i,l,car(i)} \|_{L^\frac{2}{n-2}(P \cap N_m)}^{\frac2{n-2}} \right)^{\frac{n-2}2}
\lesssim C(\bm{\nu}) \left( (R^{\frac12+\delta_0})^\epsilon (\rho^{-\frac12+\delta_1})^\frac12 + O(R^{-N}) \right) \prod_{i=1}^{n-1} \| f_{i} \|_{L^2(U_i)}.
\]
At the same time we have the global estimate
\begin{equation} \label{aux111}
\left( \sum_{P \in \mathcal{P}} \| \prod_{i=1}^{n-1} \calE_i f_{i,l,car(i)} \|_{L^\frac{2}{n-2}(P \cap Q_R)}^{\frac2{n-1}} \right)^{\frac{n-1}2}
\lesssim  C(\bm{\nu}) R^{\frac{n-1}2} \prod_{i=1}^{n-1} \| f_{i} \|_{L^2(U_i)}.
\end{equation}
We postpone the proof of this estimate to Subsection \ref{subsection 5.8} too.

Then we interpolate the estimates in $\ell^{\frac2{n-2}}$ with $\ell^\frac2{n-1}$ with $\theta=\frac{2(n-1)}{2n-1}$ and $1-\theta=\frac{1}{2n-1}$ respectively,
to obtain an estimate in  $L^{\frac{2(2n-1)}{(n-1)(2n-3)}}$ (the endpoint for $k=n-1$) with the factor
\[
((R^{\frac12+\delta_0})^\epsilon)^{\frac{2(n-1)}{2n-1}} (\rho^{-\frac12+\delta_1})^{\frac12 \cdot \frac{2(n-1)}{2n-1}}(R^{\frac{n-1}2})^{\frac1{2n-1}} \lesssim 
R^{\epsilon + O(\delta_0 +\delta_1^2)}.
\]
This last inequality suggests that a good choice of parameters is
\[
\delta \ll \delta_0 \ll \delta_1 \ll \epsilon. 
\]
This leads to the following inequality:
\begin{equation} \label{gest4}
\mu(Q_R)^\frac1p \leq C(\epsilon) C(\bm{\nu}) D_m^{\kappa_m} R^{\frac32\epsilon}  \prod_{i=1}^k \| f_{i} \|_{L^2(U_i)},
\end{equation}
where $\epsilon$ and $C(\epsilon)$ are the parameters given by Theorem \ref{MB} and \eqref{aux30}. In particular, for $C(\epsilon)$ we pick the maximum of the two used in those results.

\subsection{The special case $m=n$.} \label{special} Here we cover the special case $m=n$.  We are given an algebraic variety $Z=Z(P_1,...,P_{n})$ which is a transverse complete intersection. The polynomials $P_i$'s have degrees less than $D_i$, with $D_1 \leq D_2 \leq .. \leq D_n$ and the following holds true:
\begin{equation} \label{alg4a}
\mu(N_{R^{\frac12+ \delta_0}}(Z)  \cap Q_R) \gtrsim \frac{\mu(Q_R)}{D_n^{\kappa_n}}.
\end{equation} 

From Theorem \ref{ftci} it follows that $N_{R^{\frac12+ \delta_0}}(Z)$ is a finite union of a collection of balls $\{ B_{R^{\frac12+\delta_0}}(x_i) \}_{i=1}^N$ 
where $N \leq \prod_{i=1}^n D_i \leq D_n^n$. In each of this balls we have
\[
\mu(B_{R^{\frac12+\delta_0}}(x_i))^\frac1p \leq C(\bm{\nu}) A(R^{\frac12+\delta_0}) \prod_{i=1}^k \| f_i \|_{L^2(U_i)}; 
\]
this is because each ball is contained in some parallelepiped $Q_{R^{\frac12+\delta_0}}$; Thus
\[
\mu(N_{R^{\frac12+ \delta_0}}(Z))^\frac1p \lesssim N^\frac1p C(\bm{\nu}) A(R^{\frac12+\delta_0}) \prod_{i=1}^k \| f_i \|_{L^2(U_i)}, 
\]
from which we can conclude with
\begin{equation} \label{gest5}
\mu(Q_R)^\frac1p \lesssim D_n^{\frac{\kappa_n+n}p} C(\bm{\nu}) A(R^{\frac12+\delta_0}) \prod_{i=1}^k \| f_i \|_{L^2(U_i)}. 
\end{equation}

\subsection{Conclusion} \label{concl}
In this section we bring together all the bounds we collected and conclude with the desired bound. 
It is obvious that for $R \les 1$ we have $A(R) \les 1$.  

Putting together all the estimates \eqref{gest1}, \eqref{gest2}, \eqref{gest3}, \eqref{gest4} and \eqref{gest5}, we conclude with
\[
A(R) \leq \max( \frac12 A(R) + O(R^{-N}), C A(R^{\frac{\frac12+\delta_0}{\frac12+\delta_1}}) + O(R^{-N}), C A(R^{\frac12+\delta_0}), C \cdot C(\epsilon)  R^{\frac32\epsilon}). 
\]
Here $C=C(D_1,...,D_n,\kappa_1,...,\kappa_n)$ with the observation that the degrees $D_i$ and powers $\kappa_i$ are universal, independent on $R$. 
From this inequality we can conclude with
\[
A(R) \leq \tilde C(\epsilon) R^{2\epsilon},
\]
for $R \gg 1$. This finishes the proof of Proposition \ref{indp}, modulo the claims \eqref{aux30} and \eqref{aux111} which are discussed in the next section.

\subsection{Argument for \eqref{aux30} and \eqref{aux111}}\label{subsection 5.8}

In this section we cover the proofs of \eqref{aux30} and \eqref{aux111} which were used in the main argument of Proposition \ref{indp}. It is helpful to keep in mind that both estimates hold true regardless of any curvature assumptions. 

\begin{proof}[Proof of \eqref{aux30}] The first observation is that \eqref{aux30} follows from the more general estimate
\begin{equation} \label{aux30a}
\left( \sum_{x_0} \prod_{i=1}^k \| f_{i,Q_{R^{\frac12+\delta_0}}(x_0)} \|_{L^2(U_i)}^{\frac2{k-1}} \right)^{\frac{k-1}2} \lesssim \prod_{i=1}^k \| f_{i} \|_{L^2(U_i)},
\end{equation}
where $x_0$ is allowed to run over a larger set restricted only by $Q_{R^{\frac12+\delta_0}}(x_0) \subset Q_R$; of course we keep the condition that the set of parallelepipeds have the finite intersection property. The strategy here is standard, inspired by the work in \cite{BeCaTa}. We bound $\| f_{i,Q_{R^{\frac12+\delta_0}}(x_0)} \|_{L^2(U_i)}$ in terms of norms of wave packets and invoke the end-point multilinear Kakeya to obtain our result. The end-point multilinear Kakeya result we need comes from the work of Carbery and Valdimarsson \cite{CaVa}, which in turn is based on the work of \cite{Gu-main}, and this is because the work in \cite{CaVa} states the end-point estimate at all levels of multilinearity. 

There is only a small, insignificant technicality that we need to address. The above strategy works well if the wave packets are built on the scale of the uncertainty principle but this is not the case for the families of wave packets $\mathcal{T}_i$ which we used so far. This is why we modify the wave packet structure in each family $\mathcal{T}_i$ by simply replacing the spatial localization from $R^\frac{1+\delta}2$ to $C R^\frac12$ for some large $C$ (here we work on the family of wave packet adapted to a parallelepiped $Q_R$) - this is the more classical way to define wave packets; we call this new family $\tilde{\mathcal{T}}_i$. 

Below we use the $\tilde{\mathcal{T}}_i$ adapted to the $Q_{R^{1+2\delta_0}}$, which is the parallelepiped containing $Q_R$; the reason we do so is because the basic unit upon which we perform summation is of size $R^{\frac12+\delta_0}$. Let $c_{i,T}= \|f_{i,T}\|^2_{L^2}$. It is easy to see that 
\begin{equation} \label{aux30b}
\| f_{i,Q_{R^{\frac12+\delta_0}}(x_0)} \|_{L^2(U_i)} \les \left( \sum_{T \in \tilde{\calT}_i} 
\la R^{-\frac12-\delta_0} d(T, Q_{R^{\frac12+\delta_0}}(x_0)) \ra^{-N} c_{i,T} \right)^\frac12,
\end{equation}
 For now, let us ignore the tubes in each class which do not intersect $Q_{R^{\frac12+\delta_0}}(x_0)$, as they come with additional decay.  For the tubes which intersect $Q_{R^{\frac12+\delta_0}}(x_0)$, we have that
\[
\begin{split}
\prod_{i=1}^k \| f_{i,Q_{R^{\frac12+\delta_0}}(x_0)} \|^{\frac2{k-1}}_{L^2(U_i)}  \les R^{-n(\frac12+\delta_0)} \int_{Q_{R^{\frac12+\delta_0}}(x_0)}
\left( \prod_{i=1}^k \sum_{T_i \in \tilde{\calT}_i} c_{i,T_{i}} \chi_{T_i}(x) \right)^\frac1{k-1} dx 
\end{split}
\]
and as a consequence
\[
\begin{split}
\sum_{x_0} \prod_{i=1}^k \| f_{i,Q_{R^{\frac12+\delta_0}}(x_0)} \|^{\frac2{k-1}}_{L^2(U_i)} & \les R^{-n(\frac12+\delta_0)} \int_{Q_{R}}
\left( \prod_{i=1}^k \sum_{T_i \in \tilde{\calT}_i} c_{i,T_{i}} \chi_{T_i}(x) \right)^\frac1{k-1} dx \\
& \les ( \prod_{i=1}^k \sum_{T_i \in \tilde{\calT}_i} c_{i,T_{i}} )^{\frac1{k-1}},
\end{split}
\]
where in passing to the last inequality we have used \cite[Theorem 1]{CaVa} in a rescaled version: the tubes in \cite{CaVa} have diameter $1$, while here they have diameter $R^{\frac12+\delta_0}$. Since $\|f_i\|_{L^2} \approx (\sum_{T_i \in \tilde{\calT}_i} c_{i,T_{i}} )^\frac12$, \eqref{aux30a} follows. 

For the tubes which do not intersect $Q_{R^{\frac12+\delta_0}}(x_0)$, we can use a similar argument combined with the rapid decay from \eqref{aux30b} to complete the proof of \eqref{aux30a}.

\end{proof}

\begin{proof}[Proof of \eqref{aux111}] We first remark that is suffices to establish the (apparently) stronger estimate
\begin{equation} \label{aux111b}
\left( \sum_{P \in \mathcal{P}} \| \prod_{i=1}^{n-1} \calE_i f_{i} \|_{L^\frac{2}{n-2}(P \cap Q_R)}^{\frac2{n-1}} \right)^{\frac{n-1}2}  
\lesssim  R^{\frac{n-1}2}  C(\bm{\nu}) \prod_{i=1}^{n-1} \| f_{i} \|_{L^2(U_i)}.
\end{equation}
We start with the generic estimate
\[
\| \prod_{i=1}^{n-1} \calE_i f_{i} \|_{L^\frac{2}{n-2}(P)} \les  C(\bm{\nu}) \prod_{i=1}^{n-1} \| f_{i} \|_{L^2(U_i)}.
\]
It is standard that this estimate localizes, in that it suffices to consider the relevant masses within the parallelepiped $P$; precisely we have
\begin{equation} \label{aux111a}
\| \prod_{i=1}^{n-1} \calE_i f_{i} \|_{L^\frac{2}{n-2}(P)} \les  C(\bm{\nu}) \prod_{i=1}^{n-1} \left(\sum_{z \in \Z^{n-1}_i} \la z_i \ra^{-N} \| \calE_i f_{i} \|_{L^\infty_{x_i} L^2_{\bar x_i}(P+ \nu_i^{-1} z_i)} \right),
\end{equation}
where $\Z^{n-1}_i= \{ z \in \Z^n: z_i=0 \}$. For fixed $i$, we have the estimate
\[
\sum_{P \in \mathcal{P}: c_i(P)=constant} \left(\sum_{z \in \Z^{n-1}_i} \la z_i \ra^{-N} \| \calE_i f_{i} \|_{L^\infty_{x_i} L^2_{\bar x_i}(P+ \nu_i^{-1} z_i)} \right)^2 \les \| \calE_i f_{i} \|_{L^\infty_{x_i} L^2_{\bar x_i}}^2 \les \|f_{i}\|_{L^2},
\]
where we recall that $c(P)$ is the center of the parallelepiped $P$. The above is a consequence of the fact that the mass of $\calE_i f_{i}$ within parallelepipeds $P$ with $c_i(P)=constant$ has good orthogonality properties, which in turn is a consequence of the dispersive properties of the operator $\calE_i$. From this last estimate we obtain
\[
\begin{split}
& \sum_{P \in \mathcal{P}: P \subset Q_R} \left(\sum_{z \in \Z^{n-1}_i} \la z_i \ra^{-N} \| \calE_i f_{i} \|_{L^\infty_{x_i} L^2_{\bar x_i}(P+ \nu_i^{-1} z_i)} \right)^2 \\
\les & R^\frac12 \sum_{P \in \mathcal{P}: c_i(P)=constant} \left(\sum_{z \in \Z^{n-1}_i} \la z_i \ra^{-N} \| \calE_i f_{i} \|_{L^\infty_{x_i} L^2_{\bar x_i}(P+ \nu_i^{-1} z_i)} \right)^2 \\
\les &  R^\frac12 \|f_{i}\|_{L^2}. 
\end{split}
\]
From this and \eqref{aux111a} we obtain \eqref{aux111b}.

\end{proof}

\section{Proof of Lemma \ref{lem epsilon removal}}\label{section 6}

The proof of the epsilon removal lemma uses the ideas developed by Tao in \cite{Tao-BR}. Technically, our argument follows the adaptation of those ideas in \cite{Be5}, with additional adjustments to take into account the geometry of the parallelepipeds.

An important concept in the $\epsilon$-removal argument is that of sparseness, which is usually defined for isotropic objects, such as cubes or spheres. In our context we need an anisotropic notion of sparseness for parallelepipeds.

\begin{defin}
Let $\mathcal{Q}=\{Q_k\}_{k=1}^N$ be a collection of parallelepipeds in $\mathcal{P}_R$. Given $C\geq 1$, we say that $\mathcal{Q}$ is $C$-sparse if for any distinct $Q_1, Q_2\in\mathcal{Q}$ we have:
\begin{align*}
    \abs{\nu_i(c_i(Q_1) - c_i(Q_2))}\geq R^C N^C
\end{align*}
for at least one $i$.
\end{defin}

We fix a constant $C$, which will be determined at the end of this section. We then have the following sparseness lemma which follows by simply (anisotropically) rescaling the corresponding result from \cite[Lemma 3.3]{Tao-BR}:

\begin{lemma}\label{lsc}
Let $\mathcal{Q}\subseteq \mathcal{P}$ be a collection of parallelepipeds in $\mathcal{P}$ and let $N\geq 1$. Then there exists $O(N\#\mathcal{Q}^{\frac{1}{N}})$ many sparse collections $\mathcal{O}_l\subseteq \mathcal{P}_{R_l}$ which cover $\mathcal{Q}$ such that $1\lesssim R_l\lesssim \#\mathcal{Q}^{C^N}$.
\end{lemma}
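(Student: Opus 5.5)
The plan is to reduce Lemma \ref{lsc} to the isotropic statement \cite[Lemma 3.3]{Tao-BR} by an anisotropic rescaling. Let $L:\R^n\to\R^n$ be the diagonal linear map $L(x)=(\nu_1x_1,\dots,\nu_nx_n)$, with the convention $\nu_i=1$ for $i>k$. It maps each $P\in\mathcal{P}$ onto a unit lattice cube $\prod_i[l_i,l_i+1]$. More generally, it maps each element of $\mathcal{P}_R$ onto a cube of side $R$ in the lattice $R\Z^n$, and it maps centers to centers: $L(c(P))=c(L(P))$. Hence $\mathcal{Q}\subseteq\mathcal{P}$ corresponds bijectively to a collection $L(\mathcal{Q})$ of unit lattice cubes with $\#L(\mathcal{Q})=\#\mathcal{Q}$. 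This identification also matches the sparseness notions. For $Q_1,Q_2\in\mathcal{P}_R$, the $\ell^\infty$ distance between the centers of $L(Q_1)$ and $L(Q_2)$ is $\max_i|\nu_i(c_i(Q_1)-c_i(Q_2))|$. So $\mathcal{O}\subseteq\mathcal{P}_R$ is $C$-sparse in the sense of the definition above exactly when $L(\mathcal{O})$ is a family of $R$-cubes whose centers are pairwise separated by at least $R^CN^C$ in $\ell^\infty$. Up to a dimensional constant, which can be absorbed by a harmless change in $C$, this is Tao's isotropic sparseness.

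Next we apply \cite[Lemma 3.3]{Tao-BR} to the finite set of centers of $L(\mathcal{Q})$, or equivalently to the union of these unit cubes. It produces $O(N\#\mathcal{Q}^{1/N})$ collections of cubes $\{\tilde{\mathcal O}_l\}$ covering this set. Each collection consists of cubes of a common side length $R_l$ with $1\lesssim R_l\lesssim\#\mathcal{Q}^{C^N}$, and each is sparse with parameter $R_l^CN^C$.

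Tao's cubes are arbitrary cubes of side $R_l$, not lattice cubes, so we replace each one by the at most $2^n$ cubes of $R_l\Z^n$ that it meets. If we instead use lattice cubes of side $2R_l$, every cube of side $R_l$ is contained in a single lattice cube up to a bounded overlap. The resulting lattice family still covers the set, and it remains sparse with a slightly worse constant, since the separation $R_l^CN^C$ dominates the change $O(R_l)$ in the centers. Possibly we also split each family into $O_n(1)$ subfamilies so that distinct cubes coming from the same original cube stay separated. This bookkeeping is the only point requiring care. It costs at most a dimensional factor in the number of collections and a bounded change of $C$, both allowed by the $O(\cdot)$ in the statement.

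Finally, setting $\mathcal{O}_l=L^{-1}(\tilde{\mathcal{O}}_l)\subseteq\mathcal{P}_{R_l}$ gives the required collections. By the first paragraph they are $C$-sparse, and because $L^{-1}$ is a bijection they cover every $Q\in\mathcal{Q}$. Their number and the bound on $R_l$ are inherited unchanged from Tao's lemma, which completes the plan.
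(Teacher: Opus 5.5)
Your proposal is correct and follows the paper's route exactly: the paper's entire justification is that the lemma follows by anisotropically rescaling \cite[Lemma 3.3]{Tao-BR}. Your diagonal map $L$, which turns the paper's sparseness condition into $\ell^\infty$-separation of centers, together with the passage from Tao's freely positioned balls to lattice elements of $\mathcal{P}_{R_l}$, supplies precisely the detail that the paper's one-line claim leaves implicit. The one point to tighten is that splitting each family into $O_n(1)$ subfamilies (e.g.\ by the residues of the lattice indices modulo $4$) is necessary rather than optional, because the adjacent lattice cubes produced by a single Tao ball are not separated; the sentence about lattice cubes of side $2R_l$ is not accurate as stated and can simply be dropped.
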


With the above notation and result in place we can proceed with the proof.

\begin{proof}[Proof of Lemma \ref{lem epsilon removal}]
We prove a weak-type estimate using the same strategy from \cite{Be5}. First, we normalize $\norm{f_j}_{L^2(d\sigma_j)}=1$ for all $j$ and for any $\lambda>0$ we define:
\begin{align}
    E(\lambda)\defeq \{P\in\mathcal{P}\mid \|\prod_{i=1}^k \calE_if_i\|_{L^{\frac{2}{k-1}}(P)}\geq\lambda\}
\end{align}
By the generic $k$-linear restriction estimate we have $\norm{\prod_{i=1}^k \calE_if_i}_{L^{\frac{2}{k-1}}(P)}\lesssim C(\bm{\nu})$ and therefore we may assume that $\lambda\lesssim C(\bm{\nu})$.

We apply Lemma \ref{lsc} to $E(\lambda)$ to obtain sparse collections $\mathcal{O}_l=\{Q_j: Q_j \in \mathcal{P}_{R_l}\}_{j=1}^N$. We fix one such sparse collection 
$\mathcal{O}_l$ and seek to provide an estimate on it. For each $Q$ in this collection collection, we invoke the refined estimate 
\begin{equation} \label{aux111aa}
\| \prod_{i=1}^{n-1} \calE_i f_{i} \|_{L^\frac{2}{n-2}(Q)} \les  C(\bm{\nu}) R_l^\epsilon \prod_{i=1}^{n-1} \left(\sum_{z \in \Z^{n-1}_i} \la z_i \ra^{-\bar N} \| \calE_i f_{i} \|_{L^\infty_{x_i} L^2_{\bar x_i}(Q+ \nu_i^{-1} R_l z_i)} \right),
\end{equation}
where $\Z^{n-1}_i= \{ z \in \Z^n: z_i=0 \}$; this is \eqref{aux111a} adapted to the larger parallelepiped $Q$. We also recall the convention that $x=(x_i,\bar x_i)$ where $x_i$ is the coordinate of $x$ in the $e_i$ direction, while $\bar x_i$ collects all the other coordinates. 

Next, we claim that, assuming $\bar N$ is large enough, for each $i$ the following holds true
\begin{equation} \label{ssp}
    \sum_{j=1}^N \sum_{z \in \Z^{n-1}_i} \la z_i \ra^{-\bar N} \| \calE_i f_{i} \|^2_{L^\infty_{x_i} L^2_{\bar x_i}(Q_j+ \nu_i^{-1} R_l z_i)}  \les \|f_i\|^2_{L^2}.
\end{equation}
We postpone the proof for now and continue with our argument. Using \eqref{aux111aa}, Hölder's inequality, and  \eqref{ssp}, for any $p\geq\frac{2}{k}$ we have
\begin{align*}
    \sum_{j=1}^N \sum_{ j=1}^N \|\prod_{i=1}^k \calE_if_i\|_{L^\frac{2}{k-1}( Q_j)}^p &\lesssim R_l^{\e p} C(\bm{\nu})^p \sum_{j=1}^N \prod_{i=1}^k \left(\sum_{z \in \Z^{n-1}_i} \la z_i \ra^{-2 \bar N} \| \calE_i f_{i} \|_{L^\infty_{x_i} L^2_{\bar x_i}( Q_j+ \nu_i^{-1} R_l z_i)} \right)^p\\
    &\leq R_l^{\e p} C(\bm{\nu})^p \prod_{i=1}^k \left( \sum_{ j=1}^N \sum_{z \in \Z^{n-1}_i} \la z_i \ra^{- \bar N} \| \calE_i f_{i} \|^2_{L^\infty_{x_i} L^2_{\bar x_i}(Q_j+ \nu_i^{-1} R_l z_i)} \right)^{\frac{p}{2}}\\
    &\lesssim R_l^{\e p} C(\bm{\nu})^p \prod_{i=1}^k \norm{f_i}_{L^2(d\sigma_j)}^p = R_l^{\e p} C(\bm{\nu})^p.
\end{align*}
In other words, for sparse families the loss depends only on the side length of a single parallelepiped instead of the (much larger) side length of the parallelepiped containing the union.

We then combine these facts together to estimate $\#E(\lambda)$ as follows:
\begin{align*}
    \#E(\lambda) &\lesssim \lambda^{-p} \sum_{P\in E(\lambda)} \|\prod_{i=1}^k \calE_if_i\|_{L^{\frac{2}{k-1}}(P)}^p
    \lesssim \lambda^{-p} \sum_l \sum_{\substack{P\in\mathcal{P}\\P\cap\cup_{\mathcal{O}_l}\neq\emptyset}} \|\prod_{i=1}^k \calE_if_i\|_{L^{\frac{2}{k-1}}(P)}^p\\
    & \lesssim \lambda^{-p} \sum_l R_l^{\e p} C(\bm{\nu})^p
     \lesssim \lambda^{-p} C(\bm{\nu})^p N \cdot \#E(\lambda)^{\frac{1}{N}} (\#E(\lambda)^{C^N})^{\e p} \\
    & = \lambda^{-p} C(\bm{\nu})^p N \cdot \#E(\lambda)^{\frac{1}{N}+\e p C^N},
\end{align*}
where we used the notation $\cup_{\mathcal{O}_l} = \cup_{Q \in  \mathcal{O}_l} Q$. Let $\gamma=\frac{1}{N}+\e p C^N$ so that, after rearranging this inequality, we have
\begin{align*}
    \#E(\lambda)\lesssim C(\bm{\nu})^{\frac{p}{1-\gamma}} N^{\frac{1}{1-\gamma}} \lambda^{-\frac{p}{1-\gamma}}.
\end{align*}
If $q>\frac{p}{1-\gamma}$ then we estimate
\begin{align*}
    \sum_{P\in\mathcal{P}}\|\prod_{i=1}^k \calE_if_i\|_{L^{\frac{2}{k-1}}(P)}^q \lesssim \sum_{\lambda\lesssim C(\bm{\nu})} \lambda^q \#E(\lambda)\lesssim C(\bm{\nu})^{\frac{p}{1-\gamma}} N^{\frac{1}{1-\gamma}} \sum_{\lambda\lesssim C(\bm{\nu})} \lambda^{q-\frac{p}{1-\gamma}}\lesssim C(\bm{\nu})^q N^{\frac{1}{1-\gamma}}.
\end{align*}

By picking $N$ large enough and then $\e$ small enough, we can run this argument for any $q>p$ which concludes the proof; in particular, tracking the parameters carefully shows that this can be done as long as $q\geq p+(p+n+1)\frac{C}{\log(\frac{1}{\e})}$, see \cite{Be5} for details. 
\end{proof}

We conclude this section with the proof of \eqref{ssp}. Given the fast decay in the variable $z_i$, an easy exercise shows that it suffices to establish the weaker variant
\begin{equation} \label{sspw}
    \sum_{j=1}^N  \| \calE_i f_{i} \|^2_{L^\infty_{x_i} L^2_{\bar x_i}(Q_j)}  \les \|f_i\|^2_{L^2}.
\end{equation}
In fact we further simplify this by establishing
\begin{equation} \label{sspww}
    \sum_{j=1}^N  \| \calE_i f_{i} \|^2_{L^2_{\bar x_i}(Q_j \cap \{x_i=c_i(Q_j)\}}  \les \|f_i\|^2_{L^2},
\end{equation}
where we recall that $c(Q_j)$ is the center of $Q_j$, and $c_i(Q_j)$
is the $i$-th coordinate of this center. It is easy to see that the argument provided below for \eqref{sspww} is not sensitive to the specific choice of the hyperplane $\{x_i=c_i(Q_j)\}$, and any other choice would work equally well, thus implying \eqref{sspw}.

To prove this we need a little preparation. We recall that 
\[
\calE_i f_{i} = \int e^{i(\bar x_i \xi +  x_i \varphi_i(\xi))} f_i(\xi)d\xi= e^{ix_i \varphi_i(D)} g_i(\bar x_i),
\]
where $g_i= \check f_i$; here there is a slight abuse of notation as the first $i$ in the phase $ix_i \varphi_i(D)$ is meant to be the imaginary unit $i$, while all the other $i$'s are meant to be natural number indices.  \eqref{sspww} can be rephrased in terms of the boundedness of the following operator $T: L^2_{\bar x_i}(\R^{n-1})\rightarrow \ell^2 L^2_{\bar x_i}(\R^{n-1})$
\begin{align}
    Tg=(\chi_{Q_j \cap \{x_i=c_i(Q_j)\}} e^{ic_i(Q_j)\varphi_i(D)} g)_{j=1}^N.
\end{align}
We estimate the norm of this operator via a $TT^*$ argument. For this purpose we compute the adjoint $T^*: \ell^2 L^2_{\bar x_i}(\R^{n-1})\rightarrow L^2_{\bar x_i}(\R^{n-1})$
\begin{align}
    T^* F=\sum_{j=1}^N e^{-ic_i(Q_j)\varphi_i(D)} (\chi_{Q_j \cap \{x_i=c_i(Q_j)\}} F_j),
\end{align}
and therefore $TT^*: \ell^2 L^2_{\bar x_i}(\R^{n-1})\rightarrow \ell^2 L^2_{\bar x_i}(\R^{n-1})$ is given by
\begin{align}
    TT^* F=\left(\chi_{Q_k \cap \{x_i=c_i(Q_k)\}} e^{ic_i(Q_k)\varphi_i(D)}\sum_{j=1}^N e^{-ic_i(Q_j)\varphi_i(D)} (\chi_{Q_j \cap \{x_i=c_i(Q_j)\}} F_j) \right)_{k=1}^N.
\end{align}
Our main claim is that there exists some $\alpha >0$ such that the following estimate holds true
\begin{equation} \label{mQQ}
    \|\chi_{Q_k \cap \{x_i=c_i(Q_k)\}} e^{i(c_i(Q_k)-c_i(Q_j))\varphi_i(D)} (\chi_{Q_j \cap \{x_i=c_i(Q_j)\}} F_j) \|_{L^2_{\bar x_i}} \les (R^C N^C)^{-\alpha} \| F_j \|_{L^2_{\bar x_i}},
\end{equation}
for any $j \neq k$. We will see that the value of $\alpha$ is independent of $C$, thus by choosing $C$ large enough, \eqref{mQQ} implies the boundedness of $TT^*$, which in turn provides \eqref{sspw}. 

The rest of this argument is dedicated to establishing \eqref{mQQ}. We write
\[
\begin{split}
& (\chi_{Q_k \cap \{x_i=c_i(Q_k)\}} e^{i(c_i(Q_k)-c_i(Q_j))\varphi_i(D)} (\chi_{Q_j \cap \{x_i=c_i(Q_j)\}} F_j) \\
= & (\chi_{Q_k \cap \{x_i=c_i(Q_k)\}} K_i(c_i(Q_k)-c_i(Q_j), \cdot) \ast (\chi_{Q_j \cap \{x_i=c_i(Q_j)\}} F_j),
\end{split}
\]
where 
\begin{align}\label{fkerf}
     K_i(x_i,\bar x_i)=\int e^{i(\bar x_i \xi + x_i \varphi_i(\xi))} \chi(\xi)d\xi,
\end{align}
where $\chi(\xi)$ reflects the support of the original localization. Also, the convolution is taken with respect to the $\bar x_i$ variable. 
For this kernel we have the non-stationary phase type decay estimate
\begin{align}\label{fkerfdecay}
    | K_i(x_i,\bar x_i)| \les \la \bar x_i \ra^{-M}, \  \text{for} \  
    |\bar x_i|  \gg \nu_i |x_i|.
\end{align}
Assume now that $\max_{l \neq i} \nu_l |c_l(Q_j) - c_l(Q_k)| \gg \nu_i | c_i(Q_j) - c_i(Q_k)|$; in this case we have $|\bar c_i(Q_j) - \bar c_i(Q_k)| \gg \nu_i | c_i(Q_j) - c_i(Q_k)|$ and then using Schur's test 
\[
\sup_{\bar x_i \in Q_j \cap \{x_i=c_i(Q_j)\}} \int_{Q_k \cap \{x_i=c_i(Q_k)\}}  |K_i(c_i(Q_k)-c_i(Q_j),\bar x_i - \bar y_i)| d\bar y_i \les (R^C N^C)^{-1},
\]
which further gives the estimate
\[
\|(\chi_{Q_k \cap \{x_i=c_i(Q_k)\}} K_i(c_i(Q_k)-c_i(Q_j), \cdot) \ast (\chi_{Q_j \cap \{x_i=c_i(Q_j)\}} F_j)\|_{L^2_{\bar x_i}} \les (R^C N^C)^{-1} \|F_j\|_{L^2},
\]
thus establishing \eqref{mQQ} in this case. 

We are left with the remaining case $\max_{l \neq i} \nu_l| c_l(Q_j) - c_l(Q_k)| \les \nu_i | c_i(Q_j) - c_i(Q_k)|$; as a consequence, in this case we have $\nu_i | c_i(Q_j) - c_i(Q_k)| \ges R^C N^C$. Here things are a little more subtle. If we tried to use the same kernel $K_i$, then the decay is of order  $\la \nu_i x_i \ra^{-\frac12}$, as we are guaranteed to have a non-zero principal curvature (though degenerate); using this in the estimate using Schur's test above will not deliver the desired result. This is because of powers of $\nu_i^{-1}$ may accumulate when integrating across $Q_k$ and $Q_j$ in directions other than $x_i$; in the classical $\epsilon$-removal argument $\nu_i \approx 1$ and this problem does not occur.

Instead, we establish \eqref{mQQ} using wave packets for $e^{it\varphi_i (D)} f_i$ adapted to the ensemble $Q_j \cup Q_k$, that is we consider the parallelepiped 
$ Q \subset \bar R \mathcal{P}$, that contains both $Q_j$ and $Q_k$. The separation guarantees that $\bar R \geq R^C N^C$. For convenience we set $c(Q_j)=0$ and note that this does not restrict the generality of the argument.

We can replace $F_j$ by $f_j= \tilde \chi_{Q_j} F_j$, for some smooth approximation of $\chi_{Q_j}$. Then \eqref{mQQ} becomes
\begin{equation} \label{mQQ2}
    \|\chi_{Q_k \cap \{x_i=c_i(Q_k)\}} e^{i c_i(Q_k) \varphi_i(D)} (\chi_{Q_j \cap \{x_i=c_i(Q_j)\}} f_j) \|_{L^2_{\bar x_i}} \les (R^C N^C)^{-\alpha} \| f_j \|_{L^2_{\bar x_i}}.
\end{equation}
The wave packet theory gives the decomposition
\[
e^{i x_i \varphi_i(D)} \chi_{Q_j} f_j = \sum_{T \in \mathcal{T}} e^{i x_i \varphi_i(D)} f_T. 
\]
Then using orthogonality we have
\begin{equation} \label{aux22}
\| (\chi_{Q_k \cap \{x_i=c_i(Q_k)\}} (\sum_{T \in \mathcal{T}} e^{i x_i \varphi_i(D)} f_T) \|_{L^2}^2 \les \sum_{T \in \mathcal{T}} \| (\chi_{Q_k \cap \{x_i=c_i(Q_k)\}}  e^{i c_i(Q_k) \varphi_i(D)} f_T \|_{L^2}^2. 
\end{equation}
The fast decay of $e^{i x_i \varphi_i(D)} f_T$ away from the tube $T$ allows us to focus on the tubes $T$ which intersect $Q_j$ and $Q_k$. Let $\mathcal{H}$ be the plane passing through $c(Q_j)$ and $c(Q_k)$, and containing the directions $\{e_1,...,e_{n-1}\} \setminus \{e_i\}$. Then $\mathcal{H}^\perp$ is a line whose direction is given by the vector $\mathcal{N}$, whose coordinates are $0$, except $\mathcal{N}_i$ and $\mathcal{N}_n$, and here we have $\nu_i |\mathcal{N}_i| \ges \mathcal{N}_n$. As a consequence, if $N(\zeta)$ is the direction of a tube $T$, then we have that $|\pi_{\mathcal{H}^\perp} N(\zeta)| \les (\bar R)^{-\frac12+\delta}$. 
Then we invoke Lemma \ref{kL} to conclude that the Fourier support of the relevant tubes belongs to a neighborhood of size $\approx (\bar R)^{-\frac12+\delta}$ of a  submanifold $\tilde S$ of $S_i$ with the additional property \eqref{NS}; by projecting this onto $U_i \subset \R^{n-1}$, the domain of the parametrization of $S_i$ and where $\hat f_j$ is defined, we obtain that the Fourier support of the relevant tubes belongs to a neighborhood of size $\approx \tilde \mu:= (\bar R)^{-\frac12+\delta}$ of a  submanifold $\tilde M$ of $U_i$ with the additional property that $|e_n \wedge T_\xi \tilde M| \ges 1, \forall \xi \in \tilde M$. 

Now we bring in the last piece of information. Since the starting object is $\chi_{Q_j} F_j$ and $Q_j$ has size $R$ in the $e_n$ direction, this brings a local constancy for $\hat f_j$ at scale $R^{-1}$ in the direction of $e_n$. Assume $A_{l}=\{\xi \in \R^{n-1}: |\xi_n-l| \leq R^{-1} \}$ is a set of strips of size $R^{-1}$ in the direction $e_n$. If $A$ is a strip of size $\mu \les R^{-1}$ in the direction $e_n$, that is 
$A=\{\xi \in \R^{n-1}: |\xi_n-c| \leq \mu \}$ for some $c \in \R$, and $A \subset A_{l_0}$, then the following holds true
\[
\| \chi_{A} \hat f_j \|_{L^2} \les (R \mu)^{\frac12} \sum_{l} \la l-l_0 \ra^{-N} \| \hat f_j\|_{L^2(A_l)}. 
\]
Using this and the properties of the localization, we obtain
\[
\|\chi_{N_{\bar \mu} \tilde M} f_j \|_{L^2} \les (R \tilde \mu)^{\frac12} \|f_j\|_{L^2} \approx (R (R^C N^C)^{-\frac12+\delta})^{\frac12} \|f_j\|_{L^2} 
\les R^{-\frac{C}8} N^{-\frac{C}8} \|f_j\|_{L^2}, 
\]
if we assume that $C \geq 8$ and $\delta < \frac18$. Recalling \eqref{aux22},
this allows us to conclude that
\[
\|\chi_{Q_k \cap \{x_i=c_i(Q_k)\}} e^{i c_i(Q_k) \varphi_i(D)} (\chi_{Q_j \cap \{x_i=c_i(Q_j)\}} f_j) \|_{L^2_{\bar x_i}} \les R^{-\frac{C}8} N^{-\frac{C}8} \|f_j\|_{L^2},
\]
which provides \eqref{mQQ2}.

\bibliographystyle{amsplain} \bibliography{HA-refs}

\end{document}